\newcommand{\ac}[1]{\textcolor{red}{add citation}}
\pgfplotsset{compat=1.7}
\renewcommand{\cite}{\citet}
\renewcommand{\d}{\,\mathrm{d}}
\newcommand{\dd}{\overset{\mathrm{law}}{=}}
\newcommand{\p}{\mathbb{P}}
\newcommand{\q}{\mathbb{Q}}
\newcommand{\var}{\mathrm{Var}}   % variance
\newcommand{\E}{\mathbb{E}}    % expectation
\newcommand{\R}{\mathbb{R}}    % real numbers
\newcommand{\N}{\mathbb{N}}    % natural numbers
\theoremstyle{plain}
\newtheorem{theorem}{Theorem}
\newtheorem{corollary}[theorem]{Corollary}
\newtheorem{lemma}[theorem]{Lemma}
\newtheorem{proposition}[theorem]{Proposition}
\theoremstyle{definition}
\newtheorem{example}[theorem]{Example}
\theoremstyle{remark}
\newtheorem{remark}{Remark}
\newcommand{\ee}{\varepsilon}
\newcommand{\n}[1]{\left\lVert#1\right\rVert}
\newcommand{\bz}{{\mathbf{z}}}
\newcommand{\bxi}{{\boldsymbol{\xi}}}
\newcommand{\bth}{{\boldsymbol{\theta}}}
\newcommand{\bb}{\mathbf{b}}
\newcommand{\bS}{\mathbb{S}}
\newcommand{\bx}{\mathbf{x}}
\newcommand{\by}{\mathbf{y}}
\newcommand{\bc}{\mathbf{c}}
\newcommand{\bce}{\hat{\mathbf{c}}_2\cdot\be}
\newcommand{\bla}{\boldsymbol\lambda}
\renewcommand{\bth}{\boldsymbol{\theta}}
\newcommand{\bet}{\boldsymbol{\eta}}
\def\d{\mathrm{d}}
\DeclareMathOperator\supp{supp}
\newcommand{\bone}{ {\mathbbm{1}} }
\renewcommand{\S}{\mathbb{S}}
\renewcommand{\bS}{\mathbf S}
\newcommand{\sF}{\mathscr{F}}
\newcommand{\be}{\mathbf{e}}
\newcommand{\z}{\mathbf{0}}
\renewcommand{\leq}{\leqslant}
\renewcommand{\epsilon}{\varepsilon}
\title{Large Deviations of First Passage Times of Branching Random Walks in \texorpdfstring{$\R^d$}{}: Asymptotics and Algorithms}
\author{
    Jose Blanchet\thanks{Department of Management Science and Engineering, Stanford University. Email: \texttt{jose.blanchet@stanford.edu}}, ~
    Wei Cai\thanks{Department of Mechanical Engineering, Stanford University. Email: \texttt{caiwei@stanford.edu}}, ~
    Shaswat Mohanty\thanks{Department of Mechanical Engineering, Stanford University. Email: \texttt{shaswatm@stanford.edu}}, ~
    Zhenyuan Zhang\thanks{Department of Mathematics, Stanford University. Email: \texttt{zzy@stanford.edu}}
}
\begin{document}

\maketitle
\begin{abstract}
We investigate the large deviation probabilities of first passage times (FPT) of discrete-time supercritical non-lattice branching random walks (BRWs) in $\mathbb{R}^d$ where $d\geq 1$. 
The FPT refers to the first time the BRW enters a ball of radius one with a distance $x$ from the origin, conditioned upon the process's survival. 
Furthermore, we apply the spine decomposition technique to construct an asymptotically optimal polynomial-time algorithm for computing the lower large deviation probabilities of the FPT. 
The accuracy of our algorithm is also verified numerically. 
Our analysis not only provides a deeper theoretical understanding of these stochastic processes but also offers new insights into the microstructural features that are key to characterizing the strength of polymers. 
%
%Our analysis provides a further understanding of the microstructural feature that can help characterize the strength of polymers.
\end{abstract}
%\tableofcontents

\section{Introduction}

Branching random walks (BRWs) constitute a fundamental class of stochastic processes describing the spatial evolution of particles undergoing independent random movements interspersed with branching events. We focus on a discrete-time, supercritical, non-lattice BRW in $\mathbb{R}^d$, initiated by a single particle at the origin. Each particle independently produces offspring and then moves according to a common non-lattice jump distribution. A natural quantity of interest in this context is the first passage time (FPT), $\tau_x$, defined as the earliest time at which any particle reaches a target set, typically a unit ball around a distant point $x \in \mathbb{R}^d$. Understanding the probabilistic behavior of $\tau_x$, particularly in the regime of rare events, has significant theoretical implications and practical relevance in areas such as polymer physics and material science, as we will discuss.

This paper rigorously investigates the tail asymptotics of $\tau_x$, where the lower-tail (upper-tail) corresponds to the scenario of having unusually short (long) FPTs.  We further develop efficient rare-event simulation algorithms to estimate lower-tail probabilities. The style of algorithmic analysis that we present could potentially serve as a template to develop efficient Monte Carlo methods for upper-tail estimates, but this is not direct and will make the paper significantly longer. Since the simulation of upper-tail estimates are not directly applicable to the downstream tasks that we have in mind in the material science applications, we do not pursue the corresponding algorithms in this setting. Our main contributions in this paper are the following:
\begin{itemize}
    \item \textbf{Sharp lower-tail large deviations:} We derive asymptotics for the lower-tail probabilities $\p(\tau_x \le t)$ up to constants, corresponding intuitively to the rare scenario of a single unusually fast trajectory reaching the target set. See Theorem~\ref{thm:Ld1} in Section \ref{sec:LD}.

    \item \textbf{Upper-tail large deviations:} We establish logarithmic asymptotics for upper-tail probabilities $\p(\tau_x \ge t)$, capturing scenarios where reaching the target set is abnormally slow. These events arise due to a single particle drifting unfavorably before initiating a standard branching process, characterized via an optimization formulation described by Theorem \ref{thm:LD2} in Section \ref{sec:LD}.

    \item \textbf{Polynomial-time rare-event simulation algorithms:} We introduce novel simulation algorithms (Theorem \ref{prop:alg} and Corollary \ref{thm:alg}, Section \ref{sec:3.2}), based on established spine decomposition techniques. Remarkably, our algorithms achieve polynomial complexity with respect to the rarity parameter $x$, controlling both relative variance and computational effort simultaneously. This polynomial complexity stands in sharp contrast to brute-force Monte Carlo, which requires exponentially (in $x$) many replications for fixed relative precision, with each simulation incurring exponential computational cost (in $x$) due to the branching mechanism inherent in the BRW.

    \item \textbf{Insights into most-likely rare-event paths:} We provide detailed structural insights into trajectories associated with atypically short/long-FPT events, elaborated in the development of the technical proofs and mostly the algorithms in Section \ref{sec:alg}. Figure \ref{fig:fpt_extremal} illustrates some of the insights: the lower-tail trajectory heads in the direction of the target, whereas the upper-tail trajectory moves away from the target with a delayed branching event before heading towards it.
\end{itemize}

\begin{figure}[!htbp]
    \centering
     \vspace{-0.4cm}
     \includegraphics[width=0.9\linewidth]{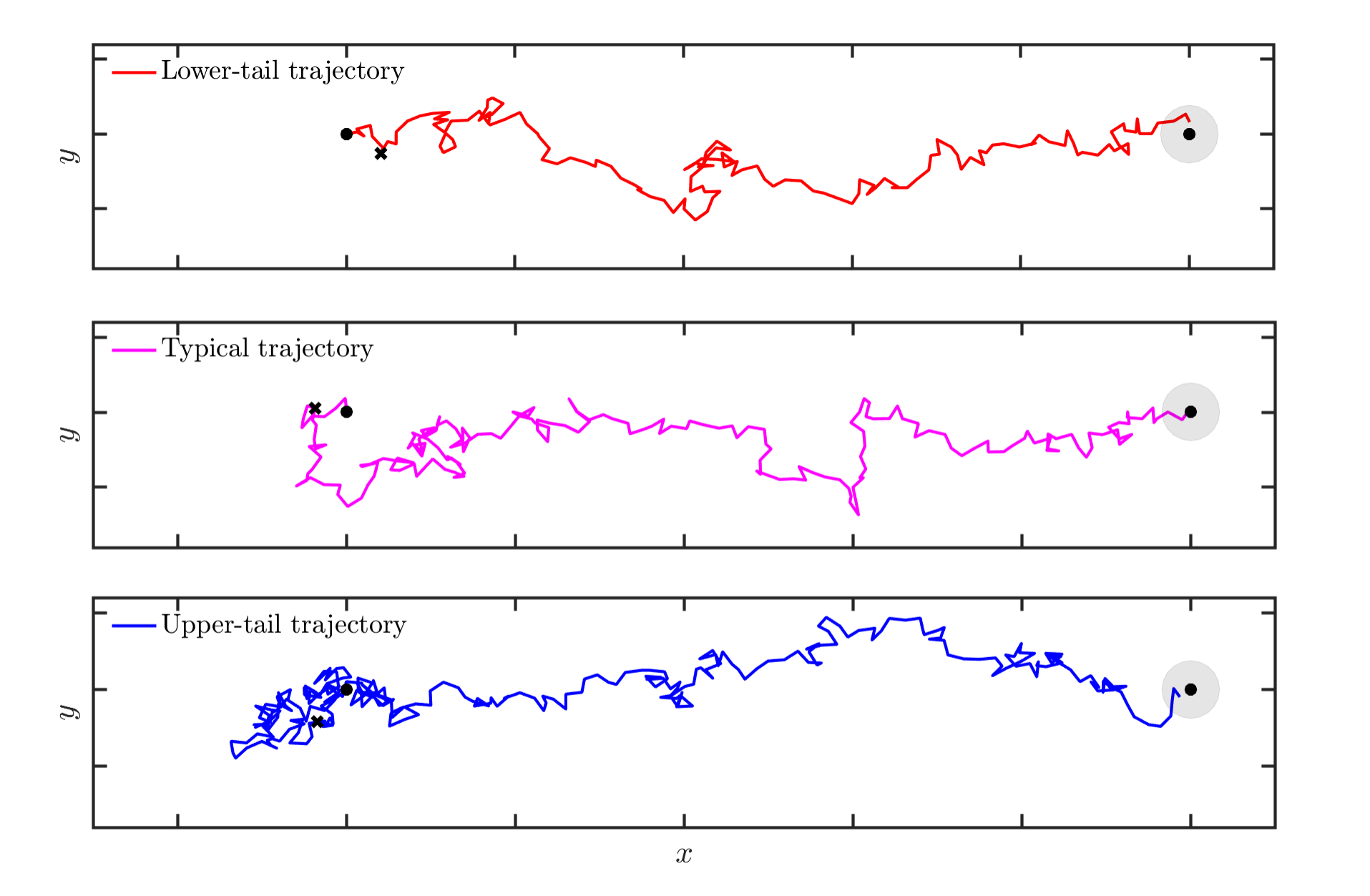}
    \vspace{-0.4cm}
    \caption{Trajectories that realize the FPT of a BRW from the lower tail, a typical realization (closer to the mean value), and the upper tail of the FPT distribution obtained through brute-force Monte Carlo simulations. The BRWs are simulated in dimension $d=3$ with jumps uniformly distributed in $\S^2$, and the plots show the projections in the first two coordinates. The black dots indicate the starting and ending locations of the trajectories, and the black crosses denote the sites of the first branching events). }%More detailed discussions can be found in Section \ref{sec:numerical}.}
    \label{fig:fpt_extremal}
\end{figure}

The significance of our results can be highlighted in three complementary aspects:

\paragraph{Probability Theory Context.} While substantial literature addresses large deviations and extremal behavior for one-dimensional BRW (and its continuous counterpart, branching Brownian motion (BBM)) \citep{addario2009minima,aidekon2013branching,bramson2016convergence,buraczewski2019large,luo2025precise}, multi-dimensional settings pose distinct and substantial challenges. Previous works on multi-dimensional BRWs have mainly focused on maximal norms \citep{berestycki2024extremal,bezborodov2023maximal,mallein2015maximal}, with fewer studies dedicated explicitly to first passage times   \citep{blanchet2024first,blanchet2024tightness,zhang2024modeling}. Our analysis makes significant progress in understanding these multi-dimensional scenarios, highlighting fundamental differences from the well-explored one-dimensional case.

\paragraph{Algorithmic Innovation.} To the best of our knowledge, our polynomial-time algorithms represent the first substantial breakthrough regarding computational complexity for rare-event simulation of BRWs. Existing literature predominantly requires exponential computational resources or deals with certain simplified settings (such as BRW with negative drift) \citep{basrak2022importance,brunet2020generate,conroy2022efficient}. Our innovative use of spine decomposition—a powerful importance sampling analogy—significantly advances practical simulations in complex multidimensional BRWs. Our methodology also enables polynomial-time estimation of critical constants involved in upper-tail large deviation probabilities for one-dimensional BRW maxima, providing practical computational tools for results previously considered challenging to implement \citep{luo2025precise}. In our paper, we focus on polynomial-type complexity of the lower tail of the FPT distribution because those events are more relevant from the standpoint of the applications that are of interest to us, namely, the study of most likely configurations that lead to early fracture or damage in elastomers. We explain this connection in the next paragraph.

\paragraph{Applications to Polymer Networks.} Our investigation is strongly motivated by applications to polymer physics, particularly in modeling elastomers using coarse-grained molecular dynamics (CGMD) simulations \citep{mohanty2025strength,yin2020topological,yin2024network,zhang2024modeling}. In polymer networks, the distribution of shortest paths (SP) between distant cross-links significantly influences macroscopic mechanical responses, including stress-strain behavior and fracture mechanisms. Empirical evidence from CGMD simulations demonstrates that polymer network fracture mechanisms are governed by these shortest paths. Shorter-than-typical first-passage events in the BRW setting directly translate to polymer networks characterized by unusually straight load-bearing paths, making them prone to damage and fracture under strain. This is why our algorithms focused on lower-tail events. Thus, our methods facilitate predictive simulations, which are key to understanding and optimizing the strength and durability of polymeric materials.
% Translational methods from BRW simulations to acceleration of CGMD simulations are of great interest and will be explored in future research.
Applying insights from these BRW simulations to accelerating CGMD simulations is a promising direction for future research.\\

By combining rigorous probabilistic analysis, innovative computational methods, and clear connections to the study of polymer networks, our work offers comprehensive insights and powerful new tools for both theoretical and algorithmic analysis of BRWs and their applications.

The rest of the paper is organized as follows. In Section~\ref{sec:LD}, after introducing the main assumptions in our paper, we present our theoretical large deviation results  (upper and lower tails). Section~\ref{Sub:spine_MLP} motivates the structure of the ``most likely path'' and the so-called spine decomposition in the context of a highly simplified model involving exponentially many independent random walks. The goal of this simplified model is to motivate the nature of the importance sampling strategy eventually used in our algorithms and the pruning of paths that will be necessary to control computational costs. Then, we move on to the rigorous development of our technical proofs in Section~\ref{sec:technical_proofs}. The construction of the importance sampling strategy is provided in Section~\ref{sec:alg}, together with a precise discussion of asymptotic optimality in the context of rare-event simulation algorithms. We also provide a thorough computational complexity analysis leading to a procedure that controls the relative mean squared error of the estimator with polynomial complexity in $x$. Numerical experiments verifying the intuition used in our proof and the empirical performance of our procedure are given in Section \ref{sec:numerical}.
Section~\ref{sec:conclusions} provides some conclusive remarks.

\paragraph{Notation.} Throughout, we consider a dimension $d\in\N$ that is fixed. $B_r(\bx)$ denotes the ball of radius $r$ centered at $\bx\in\R^d$. Typically, in this paper, a bold symbol refers to a vector.  We write $B_\bx=B_1(\bx)$. If $\bx=(x,0,\dots,0)$, we also write $B_x=B_\bx$. We use $\z$ to denote a vector of zeros when the dimension is clear and write $\be=(1,\z)\in\R^d$. Let $[n]=\{1,2,\dots,n\}$. We write $A\ll B$ (or $B\gg A$, or $A=O(B)$) if there exists a constant $C>0$ possibly depending on the law of the BRW such that $A\leq CB$. We also write $A\asymp B$ if $A\ll B\ll A$. We write $A(x)\propto B(x)$ if there exists some constant $C>0$ such that for all $x$, $A(x)=CB(x)$. Denote by $\delta_\bx$ the Dirac delta mass at $\bx$.
For a real-valued function $f$ we denote by $\mathrm{ran}f$ the range of $f$, and by $(\mathrm{ran}f)^\circ$ the interior of the range of $f$. The notation $\n{\cdot}$ stands for the Euclidean norm. 
Other frequently used notation are collected in Appendix \ref{appendix:notation}.

\section{Large deviations for first passage times of BRW}\label{sec:LD}

\subsection{Setup and main results}

Let us start with a formal definition of the branching random walk (BRW). We consider a discrete-time BRW model with an offspring distribution $\zeta$, satisfying $\p(\zeta=i)=p_i,~i\geq 0$. This means that at each time step, each particle independently gets replaced by $i$ particles at the same location with probability $p_i$.  The case of $i=0$ corresponds to the particle being terminated. Let $\rho=\E[\zeta]$ be the mean of the offspring distribution (which is finite under our assumptions below). 
The $d$-dimensional random walk increment is given by $\bxi$, and the first particle starts from the origin $\z\in\R^d$. 
Let $\phi_\bxi$ denote the moment generating function (MGF) of $\bxi$ and
\begin{align}
    I(\bx):=\sup_{\bla\in\R^d}\Big(\bla\cdot\bx-\log\phi_\bxi(\bla)\Big)=\sup_{\bla\in\R^d}\Big(\bla\cdot\bx-\log\E[e^{\bla\cdot\bxi}]\Big)\label{eq:I long}
\end{align}
denote the large deviation rate function for $\bxi$. 

In this paper, we consider the following assumptions on the BRW process:
 \begin{itemize}
     \item [(A1)] the offspring distribution $\zeta$ has a finite second moment, i.e., $\sum_i i^{2}\,p_i<\infty$; \label{A1}
     \item [(A2)] the law of $\bxi$ is integrable and centered, i.e., $\E[\bxi]=\z$;
     %\item [(A3)] $I$ is strictly convex and continuously differentiable on its domain;
         \item [(A3)] $\log\rho\in(\mathrm{ran}I(\cdot,\z))^\circ$, where $I(\cdot,\z)$ refers to the function $I$ with the last $d-1$ variables fixed zero; in addition, the MGF $\phi_\bxi$ is well-defined on all of $\R^d$;%let $c_1>0$ satisfy $I(c_1,\z)=\log\rho$, then $(c_1,\z)\in (\mathrm{ran}\nabla\log\phi_\bxi)^\circ$. Equivalently, the MGF of $\bxi$ is well-defined in an open neighborhood of $\hat{\bc}_2=\nabla I(\hat{c}_1,\z)$; 
\item [(A4)] the law of $\bxi$ is non-lattice in the sense that for all $\bx\in\R^{d}\setminus\{\z\}$, $|\E[e^{\mathrm{i}\bx\cdot\bxi}]|< 1$;
\item[(A5)] $p_0+p_1>0$.
 \end{itemize}

 Under conditions (A2) and (A4), $I(\bx)$ is continuously differentiable in the interior of its domain, continuous on its domain, and attains a unique minimum at $\bx=\z$ with $I(\z)=0$. By (A3), there exists $c_1>0$ such that 
 \begin{align*}
     I(c_1,\z)=\log\rho.%\label{eq:c1 def}
 \end{align*}
 The constant $c_1$ can be understood as the linear speed of the typical BRW frontier along the direction of the first coordinate.

 Let $V_n$ denote the collection (i.e. set) of particles at the time step $n$. Denote by $S:=\{\forall n\geq 1,~|V_n|>0\}$ the {survival event} that the underlying branching process survives at all times. Let $q=1-\p(S)$ be the extinction probability. 
%  Our large deviation estimates mirror the main results of \citep{gantert2018large} on the large deviation of the maximum $M_n$. We summarize their main result as follows. 

% \begin{theorem}[Theorem 3.2 and Remark 3.3 of \citep{gantert2018large}]\label{thm:1DLD} Assume $d=1$ and (A1)--(A3) hold. 
% \begin{enumerate}[(i)]
%     \item For $\hat{c}_1>c_1$, we have
%     $$\lim_{n\to\infty}-\frac{1}{n}\log\p(M_n>\hat{c}_1n\mid S)=I(\hat{c}_1)-\log\rho.$$
%     \item Suppose moreover that (A5) holds, then for $\hat{c}_1<c_1$, 
%     $$\lim_{n\to\infty}-\frac{1}{n}\log\p(M_n<\hat{c}_1n\mid S)=\inf_{t\in(0,\min\{1-\hat{c}_1/c_1,1\}]}\Big(t\gamma+tI(t^{-1}(\hat{c}_1-(1-t)c_1))\Big),$$
%     where 
%     $$\gamma=-\log\E[\zeta q^{\zeta -1}].$$
% \end{enumerate}
    
% \end{theorem}

% To see that Theorem \ref{thm:1DLD} follows from \citep{gantert2018large}, note that Assumption 2.1 therein follows from the finiteness of $I$ in a neighborhood of $(c_1,\z)$, the supercriticality (Assumption 2.2 therein) follows from (A3), the $x\log x$ moment condition (Assumption 2.3 therein) follows from (A1) and (A3) by \eqref{eq:1+delta condition} below, and Assumption 2.4 therein follows from (A5).

  For $x\in\R$, we let $B_x$ denote a ball of radius one centered at $\bx=(x,0,\dots,0)$ in $\R^d$. For $v\in V_n$, we let $\bet_v$ denote the location of the particle $v$. We define the first passage time (FPT) $\tau_x$  of the BRW to $B_x$, that is,
$$\tau_x:=\min\{n\geq 0 : \exists\, v\in V_n,~ \bet_{v}\in B_x\}.$$
Note that our results in this paper can be easily generalized to target balls centered at $x$ with a fixed radius $r>0$ (that does not depend on $x$). 
The following are our main results.

\begin{theorem}[lower-tail deviation]\label{thm:Ld1}
    Assume that (A1)--(A4) hold. For $\hat{c}_1>c_1$, we have
\begin{align}
    \p\left(\left.\tau_x<\frac{x}{\hat{c}_1}\right| S\right)\asymp x^{-d/2}
    \exp\left(-\frac{x}{\hat{c}_1}\big(I(\hat{c}_1,\z)-\log\rho\big)\right).\label{eq:asymp lb}
\end{align}
    In particular,
    $$\lim_{x\to\infty}-\frac{1}{x}\log\p\left(\left.\tau_x<\frac{x}{\hat{c}_1}\right| S\right)=\frac{1}{\hat{c}_1}\Big(I\big(\hat{c}_1,\z\big)-\log\rho\Big).$$
\end{theorem}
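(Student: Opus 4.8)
The plan is to establish the sharp two-sided bound \eqref{eq:asymp lb}; the logarithmic limit in Theorem~\ref{thm:Ld1} then follows by taking $-x^{-1}\log(\cdot)$ and letting $x\to\infty$. The event $\{\tau_x<x/\hat c_1\}$ is a lower deviation: since the BRW frontier advances at linear speed $c_1$, reaching $B_x$ by time $x/\hat c_1$ with $\hat c_1>c_1$ forces one lineage to travel at the anomalous speed $\hat c_1$. Two tools are used throughout. The many-to-one lemma: for $f\ge0$, $\E\big[\sum_{v\in V_n}f(\bet_v)\big]=\rho^n\,\E[f(S_n)]$, with $(S_n)$ the random walk of increment $\bxi$. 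And a local, Bahadur--Rao-type large deviation estimate in $\R^d$: under (A2)--(A4), $\p(S_n\in B_1(na))\asymp n^{-d/2}e^{-nI(a,\z)}$ uniformly for $a$ in compact subsets of $(0,\infty)$ (identifying $a\leftrightarrow(a,\z)$), obtained by tilting by the $\bla^\ast$ with $\nabla\log\phi_\bxi(\bla^\ast)=(a,\z)$ — legitimate since (A3) makes $\phi_\bxi$ finite on $\R^d$ — and applying the local CLT to the tilted centered walk, where (A4) is exactly what produces the clean $n^{-d/2}$ prefactor. Finally, since $\rho>1$ under (A3), $\p(S)=1-q$ is a positive constant and $\p(S\mid\mathcal F_n)=1-q^{|V_n|}\ge1-q$ on $\{|V_n|>0\}$; hence it suffices to prove $\p(\tau_x<x/\hat c_1)\asymp x^{-d/2}e^{-\frac{x}{\hat c_1}(I(\hat c_1,\z)-\log\rho)}$, the upper bound then passing to the conditioned event verbatim and the lower bound with a loss of the constant factor $1-q$. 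Write $n=\lceil x/\hat c_1\rceil-1$ and $Z_m=\#\{v\in V_m:\bet_v\in B_x\}$.

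\emph{Upper bound.} A union bound over times and particles gives $\p(\tau_x<x/\hat c_1)\le\sum_{m<x/\hat c_1}\E[Z_m]=\sum_{m<x/\hat c_1}\rho^m\p(S_m\in B_x)$. The terms with $m\le\varepsilon x$ (for a small fixed $\varepsilon>0$) are negligible relative to the main term by a one-coordinate Chernoff bound, and for $\varepsilon x\le m<x/\hat c_1$ the local estimate applies uniformly and yields $\rho^m\p(S_m\in B_x)\ll m^{-d/2}e^{-xF(x/m)}$ with $F(c):=c^{-1}(I(c,\z)-\log\rho)$. From convexity of $c\mapsto I(c,\z)$ and $I(c_1,\z)=\log\rho$ one computes $F'(c)=c^{-2}\big(cI'(c,\z)-(I(c,\z)-I(c_1,\z))\big)\ge c^{-2}c_1I'(c,\z)>0$ for $c>c_1$, so $F$ is strictly increasing there; since every $c=x/m$ in the sum exceeds $\hat c_1>c_1$, the exponent is largest at $m=n$, and as $m\mapsto xF(x/m)$ decreases at a rate bounded below by a positive constant over $m\in[\varepsilon x,x/\hat c_1]$ the sum is geometric from its top term, giving $\p(\tau_x<x/\hat c_1)\ll n^{-d/2}e^{-xF(\hat c_1)}\asymp x^{-d/2}e^{-\frac{x}{\hat c_1}(I(\hat c_1,\z)-\log\rho)}$.

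\emph{Lower bound.} The plain second moment fails: pairs of particles with a long shared ancestry inflate $\E[Z_n^2]$ enough that $\E[Z_n^2]/\E[Z_n]\to\infty$. The remedy is a truncated second moment. Fix a large constant $R$ and let $\tilde Z_n$ count those $v\in V_n$ with $\bet_v\in B_x$ whose ancestral path stays in the tube $\bet_{v_k}\in\mathcal T_k:=B_{w_k}(k\hat c_1\be)$ for all $k\le n$, where the half-width $w_k\asymp R\sqrt{(k\wedge(n-k)+2)\log(k\wedge(n-k)+2)}$ narrows near the endpoints; as $x=\hat c_1 n+O(1)$, the segment $k\mapsto k\hat c_1\be$ is precisely the mean trajectory of the walk tilted to speed $\hat c_1$. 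For the first moment, $\E[\tilde Z_n]=\rho^n\,\p(S_k\in\mathcal T_k\ \forall k\le n,\ S_n\in B_x)$; tilting by $\bla^\ast$ and combining the local CLT for $S_n$ with the fact that a centered random-walk bridge stays within $w_k$ of its mean with probability bounded below (Donsker, plus an integral test for the limiting Brownian bridge — this is what forces $w_k$ above the $\sqrt{k(n-k)/n}$ scale by a logarithmic factor) yields $\E[\tilde Z_n]\asymp\rho^n n^{-d/2}e^{-nI(\hat c_1,\z)}$, the same order as $\E[Z_n]$. For the second moment, decompose over the generation $k\in\{0,\dots,n-1\}$ of the most recent common ancestor $u$ of two counted particles: $u\in V_k$ necessarily lies in $\mathcal T_k$, it offers $\zeta_u(\zeta_u-1)$ ordered pairs of children that evolve independently from $\bet_u$, and bounding each subtree by its expected count $\Psi_{n-k}(\bet_u)$, where $\Psi_m(y):=\rho^m\,\p(y+S_m\in B_x,\ \text{path in the remaining tubes})$, the inequality $\one(\text{path of }u\text{ in tubes})\,\Psi_{n-k}(\bet_u)^2\le\big(\sup_{y\in\mathcal T_k}\Psi_{n-k}(y)\big)\,\one(\text{path of }u\text{ in tubes})\,\Psi_{n-k}(\bet_u)$ together with the many-to-one identity $\E\big[\sum_{u\in V_k}\one(\text{path of }u\text{ in tubes})\,\Psi_{n-k}(\bet_u)\big]=\E[\tilde Z_n]$ gives
\begin{equation*}
\E[\tilde Z_n^2]\ \le\ \E[\tilde Z_n]\;+\;\E[\zeta(\zeta-1)]\,\E[\tilde Z_n]\sum_{k=0}^{n-1}\ \sup_{y\in\mathcal T_k}\Psi_{n-k}(y),
\end{equation*}
with $\E[\zeta(\zeta-1)]<\infty$ by (A1). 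Dropping the subtree tube constraint and optimizing $y$ over $\mathcal T_k$ — the supremum sits at the forward face $y=k\hat c_1\be+w_k\be$, where the subtree only needs the slightly sub-critical speed $\hat c_1-w_k/(n-k)$ — the local estimate bounds $\sup_{\mathcal T_k}\Psi_{n-k}\ll(n-k)^{-d/2}e^{-(n-k)(I(\hat c_1,\z)-\log\rho)}e^{Cw_k}$ up to a polynomial factor (with boundedly many terms near $k=n$ simply $O(1)$). Writing $j=n-k$ and $J:=I(\hat c_1,\z)-\log\rho$, which is $>0$ precisely because $\hat c_1>c_1$, we get $\sum_k\sup_{\mathcal T_k}\Psi_{n-k}\ll\sum_{j\ge1}j^{-d/2}e^{-jJ}e^{Cw_{n-j}}<\infty$ since $w_{n-j}\asymp R\sqrt{j\log j}=o(j)$. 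Hence $\E[\tilde Z_n^2]\asymp\E[\tilde Z_n]$, so Paley--Zygmund gives $\p(\tau_x<x/\hat c_1)\ge\p(\tilde Z_n\ge1)\ge(\E\tilde Z_n)^2/\E[\tilde Z_n^2]\asymp\E[\tilde Z_n]\asymp x^{-d/2}e^{-\frac{x}{\hat c_1}(I(\hat c_1,\z)-\log\rho)}$, matching the upper bound.

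The main obstacle is the calibration of the tube $\mathcal T_k$: it must be wide enough, on the scale $\sqrt{(k\wedge(n-k))\log(\cdot)}$, that the bridge-in-tube event retains a constant fraction of the unconstrained first moment, yet narrow enough near the \emph{target} end $k=n$ that $\sum_k\sup_{\mathcal T_k}\Psi_{n-k}$ converges; one must check that these requirements are compatible, which they are precisely because the random-walk fluctuation scale $\sqrt{n-k}$, even inflated by a logarithm, is $o(n-k)$ while the branching discount $J>0$ kicks in linearly. Lesser technical points are the uniformity of the local large deviation estimate as the subtree length $n-k$ ranges over all scales down to $O(1)$, and the transfer of the two bounds through conditioning on $S$ as indicated above.
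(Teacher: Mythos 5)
Your upper bound is essentially the paper's own argument (union bound plus many-to-one, a Chernoff/Cram\'er bound for the early times, a Bahadur--Rao local estimate in the critical window, and geometric domination by the top term); the paper splits at $x/\hat c_1-\sqrt{x}$ rather than at $\varepsilon x$, which sidesteps your slightly too-strong claim that the local estimate holds uniformly for speeds up to $1/\varepsilon$ --- for those times a plain Chernoff bound suffices and is what the paper uses. The lower bound is where you genuinely diverge: you run a truncated second-moment argument (a tube of half-width $\asymp R\sqrt{(k\wedge(n-k))\log(k\wedge(n-k))}$ around the tilted mean path, decomposition over the most recent common ancestor, Paley--Zygmund), whereas the paper changes measure to the size-biased spine law $\q$, constructs explicit events $E_1$ through $E_6$ constraining the spine, the adjacent bones and the off-spine BRWs, bounds $\d\p/\d\q$ on them, and thereby proves the stronger Proposition \ref{prop:LB}: a matching lower bound for the point probability $\p(\tau_x=\lfloor x/\hat c_1\rfloor-t)$, i.e.\ including the requirement that no particle enters $B_x$ strictly before time $n$. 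Your route is shorter and more classical, and it is all that Theorem \ref{thm:Ld1} needs, since $\{\tilde Z_n\geq 1\}\subseteq\{\tau_x\leq n\}$; what it does not buy is the exact-hitting-time asymptotics, which the paper requires later (Section \ref{sec:alg}) for the efficiency analysis of the estimator of $\p(\tau_x=n)$, and which a bare second-moment bound cannot deliver because earlier passages must be excluded. Two technical debts in your write-up are real but standard, and you flag them: (i) the first-moment lower bound $\E[\tilde Z_n]\gg\rho^n n^{-d/2}e^{-nI(\hat c_1,\z)}$ needs a walk-level bridge-confinement estimate, and Donsker alone does not cover the ends of the tube, where its width is $O(1)$ and hence below diffusive scale --- a dyadic union bound combining a sub-Gaussian bound for $\tilde S_k$ with a concentration-function/local-CLT factor for the remaining $n-k$ steps closes this; (ii) the local CLT input is the analogue of the paper's Lemma \ref{lemma:S_k}(iii) after tilting. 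Granting these, your computation $\E[\tilde Z_n^2]\ll\E[\tilde Z_n]$ (exchanging one factor $\Psi_{n-k}$ for its supremum over the tube, using many-to-one, $\E[\zeta(\zeta-1)]<\infty$ from (A1), and summability of $e^{-jJ+Cw_{n-j}}$ because $J=I(\hat c_1,\z)-\log\rho>0$ and $w_{n-j}=o(j)$) is sound, and the transfer of both bounds through conditioning on $S$ via $\p(S\mid\sF_n)\geq 1-q$ on $\{|V_n|>0\}$ is correct.
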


The condition of $\hat{c}_1>c_1$ means that the BRW frontier along the first coordinate expands at a mean speed $\hat{c}_1$ that is higher than the typical value $c_1$.  In other words, the trajectory of the particle achieving the FPT is straighter than expected from a typical scenario.
See Figure \ref{fig:nonsphere2} for an illustration. 
The asymptotics \eqref{eq:asymp lb}, especially the tightness of the lower bound, will become crucial later in Section \ref{sec:alg} in showing the polynomial complexity of the estimators of the large deviation event. The underlying intuition of the lower-tail deviation event $\{\tau_x<x/\hat{c}_1\}$ is that at time $n\approx x/\hat{c}_1$, there exists a single ``fast'' trajectory (among roughly $\rho^n$ ones) that reaches the target $B_x$, while the other trajectories stay close to normal except for their parts that overlap with the fast trajectory.

\begin{figure}[!htbp]
    \centering
    \includegraphics[width=0.65\textwidth]{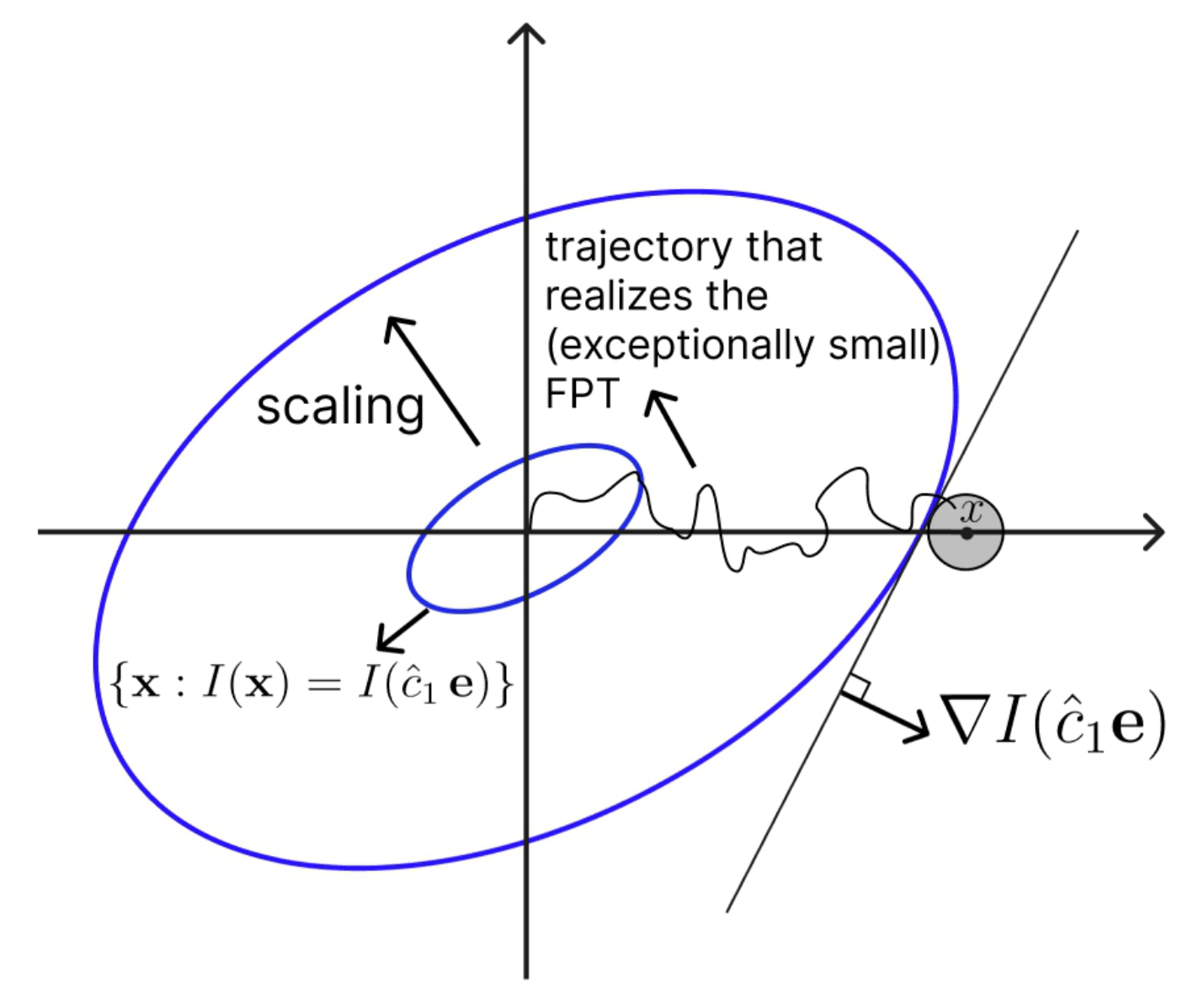}
    \caption{Illustration of the lower deviation of $\tau_x$. The trajectory that realizes the (exceptionally small) FPT roughly resembles an exponentially tilted random walk with mean $\hat{c}_1\be$.}
    \label{fig:nonsphere2}
\end{figure}
   
We conjecture that there exists a constant $C>0$ depending on the law of the BRW and $\hat{c}_1$ such that the lower deviation probability has the following precise asymptotics
$$\p\left(\left.\tau_x<\frac{x}{\hat{c}_1}\right| S\right)=(C+o(1)) \, x^{-d/2} \, \exp\left(-\frac{x}{\hat{c}_1}\big(I(\hat{c}_1,\z)-\log\rho\big)\right).$$
The analogous result for the maximum of a one-dimensional BRW has recently been established in \citep[Theorem 1.3]{luo2025precise}.

The analysis of the upper deviation probabilities of $\tau_x$ involves the following optimization problem: given $\bx=(x,\z)=x\be,\,\rho,\,\gamma=-\log\E[\zeta  q^{\zeta -1}]\in(0,\infty]$, and $\hat{c}_1<c_1$,
\begin{align}\begin{split}
    \text{minimize}\quad &\gamma\alpha+\alpha I\Big(\frac{\by_x}{\alpha}\Big),\\
    \text{subject to}\quad &(\alpha,\by_x)\in(0,1/\hat{c}_1)\times\R^d:~I\Big(\frac{\be-\by_x}{1/\hat{c}_1-\alpha}\Big)\geq\log \rho.
\end{split}\label{eq:max}
\end{align}
Denote by $T(x,\hat{c}_1;\rho,\gamma)$ the optimal value of \eqref{eq:max}. %Note that the inequality in the constraint cannot be replaced by equality in general unless $\bxi$ is spherically symmetric. 

\begin{theorem}[upper-tail deviation]\label{thm:LD2}
    Assume (A1)--(A5), then for $\hat{c}_1<c_1$, 
    $$\lim_{x\to\infty}-\frac{1}{x}\log\p\Big(\tau_x>\frac{x}{\hat{c}_1}\mid S\Big)=T(x,\hat{c}_1;\rho,\gamma).$$ 
\end{theorem}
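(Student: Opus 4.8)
The plan is to prove the upper-tail large deviation result by analyzing the "most likely path" structure revealed by the optimization problem \eqref{eq:max}: conditioned on survival, the dominant way for $\tau_x > x/\hat{c}_1$ to occur is that one particle (a "spine") performs an atypically slow excursion for time $\alpha x$ reaching some intermediate point $\by_x$ while \emph{all} its branching attempts are suppressed (each offspring line must avoid the target, contributing the $\gamma\alpha$ term), and then from $\by_x$ at time $\alpha x$ the remaining budget of time $(1/\hat{c}_1 - \alpha)x$ is insufficient for a \emph{typical} branching random walk started at $\by_x$ to reach $B_x$, which forces the constraint $I((\be - \by_x)/(1/\hat{c}_1 - \alpha)) \geq \log\rho$ (i.e. the branching cloud's linear speed in the direction of $\bx$ falls short). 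Optimizing over the split point $(\alpha, \by_x)$ gives the rate $T(x,\rho,\gamma)$.

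For the \textbf{lower bound} on $\p(\tau_x > x/\hat{c}_1 \mid S)$ (upper bound on the probability being small), I would fix a near-optimal $(\alpha^*, \by_x^*)$ for \eqref{eq:max}, force the root particle to follow a tube of width $o(x)$ around the straight-line path to $\by_x^*$ over the first $\lceil \alpha^* x\rceil$ steps — this has probability $\exp(-(1+o(1))\alpha^* x\, I(\by_x^*/\alpha^*))$ by Cramér/Mogulskii — while simultaneously requiring that every offspring produced along this path (and their descendants) never enters $B_x$ before time $x/\hat{c}_1$. The cost of killing all side-branches is governed by $\E[\zeta q^{\zeta-1}]$ per step (the generating-function-type quantity appearing in $\gamma$): the probability that a branching random walk started at a fixed point of distance $\gg x$ from $\bx$ fails to reach $B_x$ within the available time should be $\ge (1 - o(1))\cdot(\E[\zeta q^{\zeta-1}])^{\text{steps}}$ up to subexponential corrections, using that reaching a far target in limited time is itself a rare event plus the extinction structure. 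After reaching $\by_x^*$, one further requires the surviving branching cloud not to reach $B_x$ in the residual time, which by the constraint $I(\cdots)\ge\log\rho$ together with the known ballistic speed / maximal displacement results for multi-dimensional BRW (e.g. \citealp{mallein2015maximal,bezborodov2023maximal}) happens with probability bounded below by a constant (or at worst $e^{o(x)}$). Multiplying and taking logs gives the matching lower bound.

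For the \textbf{upper bound} on the probability (lower bound on the rate), I would condition on the survival event and decompose according to which particle $v$ is an ancestor of the first particle to enter $B_x$, or more robustly, use a union bound over the genealogy. The key estimate is a \emph{single-trajectory / many-to-one} lemma: for the event $\{\tau_x > x/\hat{c}_1\}$, consider the first time $\sigma$ (if any) that the branching process first "branches successfully into a surviving subtree that will behave typically" — before $\sigma$ the process is effectively a single random walk whose each step is penalized by the factor $\E[\zeta q^{\zeta-1}]$ (this is where the $\gamma$ enters, via the spine change of measure / Lyons–Kurtz–Peres many-to-one), and the position at time $\sigma = \alpha x$ must be some $\by_x$; after $\sigma$, a typical BRW from $\by_x$ must still fail to reach $B_x$ within $(1/\hat{c}_1 - \alpha)x$ steps, which by a large deviation upper bound for the minimal-distance-to-$\bx$ functional forces $I((\be-\by_x)/(1/\hat{c}_1-\alpha)) \ge \log\rho - o(1)$. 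Summing/integrating over the split point $(\alpha,\by_x)$ (there are only polynomially many relevant choices after discretization) and over the genealogical choices (at most exponentially many, absorbed into the $o(x)$) yields $\p(\tau_x > x/\hat{c}_1 \mid S) \le e^{-(1+o(1))\, x\, T(x,\rho,\gamma)}$.

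The \textbf{main obstacle} I anticipate is making rigorous the "slow spine with all branches suppressed" picture for the upper bound — specifically, showing that the \emph{only} exponentially-efficient strategies are of the two-phase form captured by \eqref{eq:max}, rather than, say, multiple partial slowdowns interleaved with partial branching, or a slow cloud rather than a slow single particle. This requires a careful spine decomposition under the conditioning on $S$ and on non-arrival, controlling the contribution of the (possibly exponentially many) descendant subtrees with a second-moment or truncated-first-moment argument, and a uniform large-deviation principle for the distance-to-target functional of a BRW that is itself conditioned to be atypical; the non-lattice assumption (A4) and the finite-everywhere MGF in (A3) are what allow the Cramér-type estimates with the correct constants, while (A5) guarantees $q>0$ so that $\gamma<\infty$ is the relevant regime (when $p_0 = 0$ one has $\gamma = +\infty$ and the slowdown-by-suppression mechanism is prohibitively costly, consistent with the optimization forcing $\alpha \to 0$).
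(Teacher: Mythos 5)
Your lower bound follows essentially the paper's route: the paper realizes the suppressed-branching phase by the cleaner event $\{|V_{\lfloor\alpha x\rfloor}|=1\}$, whose conditional cost $e^{-(\gamma+o(1))\alpha x}$ is exactly Lemma \ref{lemma:GW LD}, instead of your per-step avoidance claim that the side branches miss $B_x$ with probability at least $(1-o(1))(\E[\zeta q^{\zeta-1}])^{\alpha x}$ (which you would still have to prove, and which is really an extinction estimate in disguise, since for $\hat{c}_1<c_1$ a surviving side branch reaches $B_x$ in the remaining time with probability close to one); the residual phase is then handled by a first-moment/Cram\'er bound showing that, under the constraint in \eqref{eq:max}, failure to reach is not exponentially costly.

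The genuine gap is in your upper bound, precisely at the point you flag. Your time $\sigma$ (``first time the process branches successfully into a surviving subtree that will behave typically'') is not measurable with respect to the past, and the reduction of everything before $\sigma$ to a single random walk penalized by $\E[\zeta q^{\zeta-1}]$ per step is exactly the statement that needs proof: nothing a priori excludes a slow cloud of sub-exponentially many particles or interleaved partial slowdowns. Moreover, your scheme cannot force the constraint $I((\be-\by_x)/(1/\hat{c}_1-\alpha))\ge\log\rho-o(1)$: conditioned on survival, a \emph{single} typical subtree started from $x\by_x$ fails to reach $B_x$ with probability that is only $o(1)$, not exponentially small, so violating the constraint is not exponentially penalized in your decomposition. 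The paper supplies the missing machinery: it introduces the population stopping time $T_x=\inf\{t\ge 0:|V_{\lfloor tx\rfloor}|\ge x^3\}$ and charges the whole pre-$T_x$ phase at rate $\gamma$ via the second statement of Lemma \ref{lemma:GW LD} (lower deviations of the Galton--Watson population below \emph{any} subexponential threshold), which disposes of slow clouds; it then shows via \eqref{eq:tx<x^2} that shortly after $T_x$ at least $x^2$ particles are present, so that if none of them lies in the hard region $xR_t$ of \eqref{eq:Rt}, the uniform hitting estimate of Lemma \ref{lemma:uni} makes the simultaneous failure of all these independent subtrees superexponentially unlikely, thereby forcing the constraint; and the probability that some particle does lie in $xR_t$ is bounded by a many-to-one/union bound times Cram\'er, producing the $tI(\by_x/t)$ term, after which discretizing $t$, letting $\ee_2,\ee_3\to0$, and using goodness and continuity of $I$ recover $T(x,\rho,\gamma)$. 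Without an argument of this type (population-size stopping time, Galton--Watson lower-deviation bound at subexponential scale, and a uniform estimate letting polynomially many independent subtrees enforce the constraint), your upper bound does not go through.
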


The intuition for the upper deviation is slightly more involved than the lower deviation and is shown in Figure \ref{fig:nonsphere}: at some time $\alpha x$, there remains a single particle and it is located near the point $x\by_x$; after time $\alpha x$, the particle performs a standard BRW until one of its descendants reaches $B_x$. The same phenomenon for the large deviation of the maximum of one-dimensional spatial branching processes appears in \citep{chen2020lower,chen2020branching,gantert2018large}. %See also Figure~\ref{fig:fpt_extremal} in Section~\ref{sec:numerical} for a comparison of the sample paths from the lower and upper deviation events.

\begin{figure}[!htbp]
    \centering
    \includegraphics[width=0.65\textwidth]{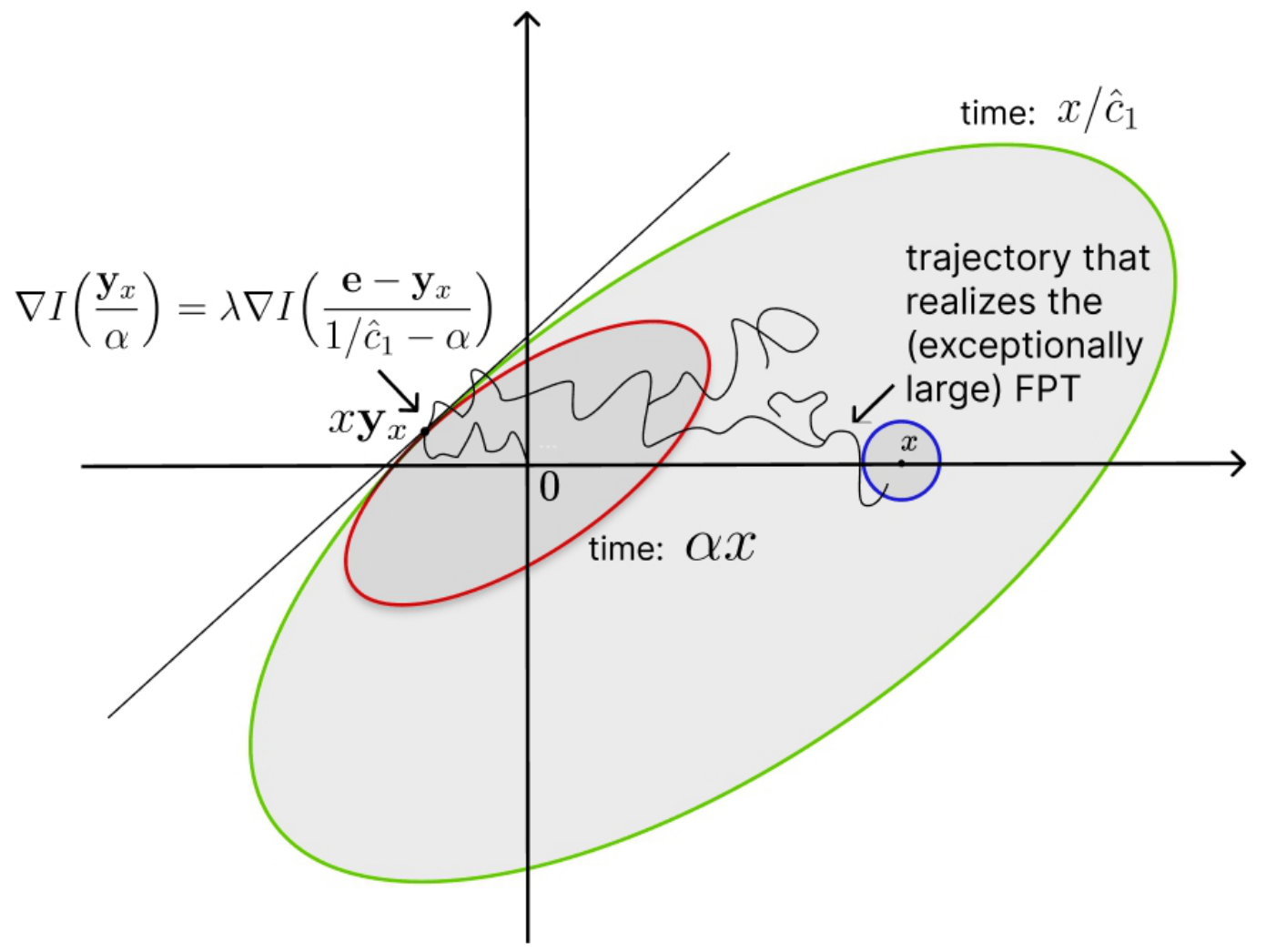}
    \caption{Illustration of the upper deviation of $\tau_x$. The first particle persists for a time $\alpha x$ without branching, landing at the point $x\by_x$. After it branches, the system performs an ordinary BRW. The green and red ellipses showcase the scaled level sets of the large deviation rate functions. Intuitively speaking, the ellipses share a common tangent line at $x\by_x$, with the red ellipse lying inside the green one. }
    \label{fig:nonsphere}
\end{figure}

\begin{example}
Suppose that $\bxi$ is spherically symmetric. By definition, $I(\bx)$ depends only on $\n{\bx}$ and is a strictly increasing function in $\n{\bx}$, say, given by $I(\bx)=I_{\n{\cdot}}(\n{\bx})$ for some function $I_{\n{\cdot}}:\R_+\to\R_+$. Given $\alpha\in[0,1/\hat{c}_1]$, the problem \eqref{eq:max} is equivalent to minimizing $\n{\by_x}$ given $\n{\be-\by_x}$. Therefore, 
%
%$\by_x\in\R_-\be$. 
$\by_x\in\{x\be, x \in \R_-\}$. 
The minimum value to \eqref{eq:max} then writes
$$\inf_{\alpha\in[0,1/\hat{c}_1]}\bigg(\gamma\alpha+\alpha I_{\n{\cdot}}\Big(\frac{1-(1/\hat{c}_1-\alpha)c_1}{\alpha}\Big)\bigg).$$
In other words, the single particle that remains at time $\alpha x$ tends to have a negative drift during the time interval $[0,\alpha x]$. 
 \end{example}

 \begin{example}
     Let $d=1$ and let $M_n$ denote the maximum among all particles' locations at time $n$. One naturally expects that $\p(\tau_x>x/\hat{c}_1)\approx \p(M_{x/\hat{c}_1}<x)$. By Theorem 3.2 of \citep{gantert2018large}, for $\hat{c}_1<c_1$,
     \begin{align*}
         -\frac{1}{x}\log\p(M_{x/\hat{c}_1}<x)&\to \frac{1}{\hat{c}_1}\inf_{t\in(0,\min\{1-\hat{c}_1/c_1,1\}]}\Big(t\gamma+tI(t^{-1}(\hat{c}_1-(1-t)c_1))\Big)\\
&=\inf_{t\in(0,1/\hat{c}_1-1/c_1]}\Big(t\gamma+tI(\hat{c}_1^{-1}t^{-1}(\hat{c}_1-(1-t\hat{c}_1)c_1))\Big)\\
&=\inf_{t\in(0,1/\hat{c}_1-1/c_1]}\left(t\gamma+tI\left(\frac{1-(1/\hat{c}_1-t)c_1}{t}\right)\right).
     \end{align*}
     On the other hand, for a fixed $\alpha$, the only feasible $\by_x$ in the problem \eqref{eq:max} is $\by_x=1-(1/\hat{c}_1-\alpha)c_1$. Minimizing in $\alpha$ shows that \eqref{eq:max} aligns with Theorem 3.2 of \citep{gantert2018large}.
 \end{example}

\subsection{The spine decomposition and the nature of the ``most likely path''}\label{Sub:spine_MLP}

In this section, we will provide a rough description of the most likely path towards the occurrence of the event $\{\tau_x \leq x/\hat{c}_1\} $ where $\hat{c}_1>c_1$, so that we can motivate both our algorithmic constructions in Section \ref{sec:alg} and its proof. Let us first assume $d=1$ and focus on a related (but greatly simplified) process, which is often used to motivate results for BRW. The same model often appears in the analysis of the BRW in various contexts, such as the maximum \citep{zeitouni2016branching} and large deviation rates \citep{gantert2018large}.

Let us write $\bar{M}_n := \max_{1 \leq k \leq \rho^n} S_n(k)$ , where $\{S_\cdot(k): k \geq 1\}$ is a sequence of i.i.d. random walks satisfying assumptions (A2)--(A4) that we imposed. For simplicity, we assume that $\rho\in\N$. In this highly simplified setting which ignores the BRW correlations, it is easy to see by inclusion-exclusion, that for $\hat{c}_1>c_1$,
\begin{equation}\label{eq:ind_mot}
\p(\bar M_{\lfloor x/\hat{c}_1\rfloor }  \geq x )= \rho^{\lfloor x/\hat c_1\rfloor}\p(S_{\lfloor x/\hat c_1\rfloor}(1)\geq x)(1+o(1)).   
\end{equation}
The logarithmic asymptotics for the upper tail of $\bar M_{\lfloor x/\hat c_1\rfloor}$ can be easily seen to coincide with those described in Theorem 3.2 of \citep{gantert2018large}: for $\hat{c}_1>c_1$, we have
     $$\lim_{n\to\infty}-\frac{1}{n}\log\p(\bar{M}_n>\hat{c}_1n\mid S)=\lim_{n\to\infty}-\frac{1}{n}\log\p(M_n>\hat{c}_1n\mid S)=I(\hat{c}_1)-\log\rho.$$

A natural importance sampling strategy to consider for the event $\{\bar M_{\lfloor x/\hat{c}_1\rfloor }  \geq x\}$, and one that describes the most likely rare-event path in this case, is to first select $1 \leq k \leq \rho^{\lfloor x/\hat c_1\rfloor}$ uniformly at random and then bias $S_{\lfloor x/\hat c_1\rfloor}(k)$ using exponential tilt with the parameter $\hat c_2 $ that induces the mean $\hat c_1$; namely, $\hat c_2 = I'(\hat c_1)$. Then, one could sample the remaining random walks under the nominal distribution. If $\q$ is the measure induced by this sampling strategy, it follows that the likelihood ratio $Q_x:=\d\q/\d\p$ satisfies 
\begin{align*}
Q_x:=\frac{\d\q}{\d\p}= \rho^{-\lfloor x/\hat c_1\rfloor}\sum_{1\leq k\leq \rho^{\lfloor x/\hat c_1\rfloor}}\exp\left(\hat c_2 S_{\lfloor x/\hat c_1\rfloor}(k)-\lfloor x/\hat c_1\rfloor\log\phi_\bxi(\hat c_2)\right).
\end{align*}
The importance sampling estimator for this problem is $Z_x:=Q^{-1}_x\bone_{\{\bar M_{\lfloor x/\hat c_1\rfloor}\geq x\}}$. Note that $Z_x$ is an unbiased estimator of $\p(\bar M_{\lfloor x/\hat{c}_1\rfloor }  \geq x )$. Thus, its variance is given by 
\begin{align*}
\E^{\q}[Z_x^2] =\E[Q^{-1}_x\bone_{\{\bar M_{\lfloor x/\hat c_1\rfloor}\geq x\}}] \leq \sum_{k=1}^{\rho^{\lfloor x/\hat c_1\rfloor}}\E[Q_x^{-1}\bone_{\{S_{\lfloor x/\hat c_1\rfloor}(k)\geq x\}}],
\end{align*}
where $\E^{\q}$ denotes the expectation operator under $\q$ and $\E$ is the expectation under $\p$. Since $Q^{-1}_x \leq \rho^{\lfloor x/\hat c_1\rfloor} \exp(-\hat c_1 S_{\lfloor x/\hat c_1\rfloor}(k)+\lfloor x/\hat c_1\rfloor\log\phi_\bxi(\hat c_2))$  for all $k$ (just ignoring the remaining positive terms in the denominator of $Z_x$), using the constraint that $S_{\lfloor x/\hat c_1\rfloor}(k)\geq x$ we conclude that 
\begin{align}\label{eq:ind_2nd_bnd}
\E^{\q}[Z_x^2]  \leq \rho^{2\lfloor x/\hat c_1\rfloor}\exp(-\lfloor x/\hat c_1\rfloor I(\hat c_1))\p(S_{\lfloor x/\hat c_1\rfloor}(k)\geq x) = O\left(\p(\bar M_{\lfloor x/\hat c_1\rfloor}\geq x)^2\sqrt{x}\right).
\end{align}
The previous bound uses \eqref{eq:ind_mot} and precise large deviations estimates of a random walk due to the Bahadur--Rao theorem (Theorem 3.7.4 of \citep{dembo2009large} and Remark (c) that follows). The conclusion is that the coefficient of variation of this estimator (i.e., standard deviation divided by the probability of interest) grows like $x^{1/4}$. So, to obtain, say, a 95\% confidence interval of size $\epsilon$ in relative terms, it suffices (using, for example, Bernstein's inequality; see Corollary \ref{thm:alg}) to simulate $O(\sqrt x/\epsilon^2)$ i.i.d. replicates of the estimator $Z_x$ and average those replicates. 

There is an important issue one would have to deal with, namely, the fact that the estimator $Z_x$ takes an exponential amount of time to produce because one would need to produce $\rho^{\lfloor x/\hat c_1\rfloor}$ many random walks. Of course, most of the contribution arises only from the ``special'' random walk chosen initially under $\q$. 
In order to deal with this exponential complexity, one needs to introduce a small bias, which needs to be controlled (and reduced if necessary) as a function of the relative error tolerance parameter $\epsilon$. 

Although the previous discussion provides a highly simplified caricature of the BRW problem, there are parts that are analogous to the BRW problem. First, the role of the ``special'' random walk. In the more interesting setting of the BRW, the analogous description would correspond to selecting uniformly at random, at time $x/\hat c_1$, one of the leaves of the tree generated at that time (the fact that such a set of leaves is random creates a complication relative to the simplified case studied earlier). Then, we can proceed with a biasing strategy using exactly the same exponential tilting parameter (i.e. $\hat c_2$) used earlier, this time applied to the random walk associated with such chosen leaf (tilting each increment along that branch of the tree). In the BRW, we can think of the biased random walk as a ``spine'' from which branches of the BRW will emanate. The rest of the BRW should be sampled under the nominal dynamics; except for the immediate generations attached to the spine, which have a bias connected to the random set of leaves mentioned earlier.

There are several ways in which one can deal with the issue of having a random number of leaves in generation $n$, as opposed to $\rho^n$, as in our simplified discussion. The standard approach, which we adopt, is to size-bias the distribution of children along the spine (which affects the immediate generations attached to the spine as indicated above). This allows us to produce the spine forward while maintaining a very similar likelihood ratio as the direct strategy inspired by the simplified example. 

Another obstacle that we need to overcome is an asymptotic lower bound estimate for the probability of interest, similar to the one that is implicitly used to conclude \eqref{eq:ind_2nd_bnd}; there we took advantage of the independence of $\{S_{\cdot}(k):k\geq 1\}$ and invoked the Bahadur--Rao estimates, while in our BRW context we need to develop the required estimate.

Finally, the last and more delicate obstacle in our problem involves controlling the bias that we must incur in order to keep the estimator within polynomial complexity in $x$. This is substantially more complicated in our setting because along the spine and especially those within $O(\log x)$ units of the time horizon contribute significantly to the tail event of interest. This part requires isolating various events that will be discussed in Section \ref{sec:3.2}.

\begin{comment}
Discussions designed for algorithmic reader, not from BRW.
\begin{enumerate}
    \item Motivate the Cramer root $\hat{c}_1$, $\hat{c}_2$ and the change of measure. Motivating the spine.
    \item Discuss the extra step needed for the lower large deviation of FPT.
    \item Discuss the ideas for the upper deviation.
\end{enumerate}
Conclusion: the hard part is to isolate the event that contribute (sharply) to the LD event.
\end{comment}

\subsection{Proof of the main results}\label{sec:technical_proofs}

\subsubsection{The spine decomposition}\label{sec:spineal}

Recall that $\{\bet_u:u\in V_n\}$ denotes the locations of the particles in generation $n$. For $\bth\in\R^d$ such that $\E[e^{\bth\cdot\bxi}]<\infty$, let
\begin{align}
    \psi(\bth):=\log\E\bigg[\sum_{u\in V_1}e^{\bth\cdot \bet_u}\bigg]=\log\rho+\log\phi_\bxi(\bth).\label{eq:psi}
\end{align}
Denote by $\sF_n:=\sigma(\bet_u:u\in V_{k},\,k\in[n])$ the natural filtration generated by the BRW process until time $n$. 
If $\psi(\bth)<\infty$, we define a new probability measure $\q_{\bth}$ (abbreviated as $\q$ in the following) by
$$\q(A)=\q_{\bth}(A):=\E[W_n^{\bth}\bone_A]\quad\text{ for }\quad n\geq 0,~A\in\sF_n,$$
where the Radon--Nikodym derivative
\begin{align}
  \frac{\d\q}{\d\p}=  W_n^{\bth}:=\sum_{u\in V_n}e^{\bth\cdot\bet_u-n\psi(\bth)}\label{eq:dq/dp}
\end{align}
forms a positive martingale under the original law $\p$. 
The interesting cases are $\bth=\hat{\bc}_2=\nabla I(\hat{c}_1,\z)$, where $\hat{c}_1>c_1$. In this case, we record for future use that by \eqref{eq:I long} and \eqref{eq:psi}, 
\begin{align}\psi(\hat{\bc}_2)%=\hat{c}_1\be\cdot\hat{\bc}_2+\log\rho-I(\hat{c}_1)
=\hat{c}_1\bce+\log\rho-I(\hat{c}_1)\label{eq:psibth}
\end{align}
and that by the continuous differentiability of $I$ on its domain, there exist $\bar{c}_1\in(c_1,\hat{c}_1)$ and $\ee_1>0$ such that
\begin{align}
    I(\hat{c}_1)-\log\rho>(\hat{c}_1-\bar{c}_1+2\ee_1)\bce.\label{eq:ccc}
\end{align}

% and by convexity of $I$,
% \begin{align}
%     \hat{c}_1\bce+\log\rho-I(\hat{c}_1)=\hat{c}_1\nabla I(\hat{c}_1)\cdot\be+I(c_1)-I(\hat{c}_1)\geq c_1\nabla I(c_1)\cdot\be=c_1\bce.\label{eq:ccc}
% \end{align}

The spine decomposition is an alternative representation of the law $\q_{\bth}$, introduced by \citep{lyons1997simple}. See also \citep{harris2017many} for a general version. We define a \emph{BRW with spine} as follows. Let $v_0$ be the unique particle at time $0$. We define a \emph{spine} $\{v_n\}_{n\geq 0}$ by selecting inductively a child $v=v_{n+1}$ of $v_n$ with probability proportional to $e^{\bth\cdot \bet_v}$ (or equivalently $e^{\bth\cdot (\bet_v-\bet_{v_n})}$). We use the pair $(N,\{\bxi_n\}_{1\leq n\leq N})$ to denote the number of descendants and their displacements of a generic particle. The offspring and displacements of particles not on the spine follow the nominal distribution (i.e. the same distribution as the original BRW: $N\dd\zeta$ and $\{\bxi_n\}$ are i.i.d.~with law $\bxi$). For those on the spine, the offspring law $(N,\{\bxi_n\})$ satisfies
\begin{align}
    \frac{\d\q}{\d\p}(N,\{\bxi_n\})=\sum_{n=1}^Ne^{\bth\cdot \bxi_n-\psi(\bth)}.\label{eq:dq/dp on spine}
\end{align}
 Observe that the first marginal satisfies ${\d\q}/{\d\p}(N)\propto N$, which implies (announced in Section \ref{Sub:spine_MLP}) that the offspring distribution is size-biased.  The process resulting from the previous recursive sampling procedure has the same law as $\q$. As before, we use $\E$ to denote the expectation under $\p$ and $\E^\q$ to denote the expectation under $\q$. 

% \begin{remark}
%     Here, the law $\p$ is the original BRW law \textit{not} conditioned upon survival. In the case of extinction (at time $n$), the sum \eqref{eq:dq/dp} vanishes, which aligns with the fact that extinction does not occur under $\q$. Since extinction cannot happen at time $n$ on the event $\{\tau_x=n\}$ we care about, we do not need additional arguments on conditioning upon survival. 
% \end{remark}

Following \citep[Section 2.2]{luo2025precise}, we construct an auxiliary point process for the BRW with spine seen from the origin, using the following elements:
\begin{itemize}
    \item $\{(\{\bb_k(i)\}_{i\in[N_k]},w_k)\}_{k\geq 1}$ are i.i.d.~copies of a certain point process describing the offsprings on the spine (there are $N_k$ many at generation $k$) along with the index $w_k\in[N_k]$ that by definition governs which index continues the random walk on the spine. For convenience, we call them adjacent jumps or bones;
    \item  $\bS_k$ is the random walk formed by the chosen indices, i.e., $\bS_k=\bb_1(w_1)+\dots+\bb_k(w_k)$. By construction, $\bS_k$ is a non-lattice random walk with mean $(\hat{c}_1,\z)$ (see Lemma \ref{lemma:S_k} below);
    \item $\{\bet^{(i,k)}\}_{k\geq 1,\,i\in[N_k]}$ are i.i.d.~copies of the BRW with jump $\bxi$, which we call the off-spine BRWs (also called decorations in some literature);
    \item for each $k\in [n-1]$, $i\in[N_k]$, and $j\in[n-k-1]$, $V^{(i,k)}_j$ is the set of all particles (indexed by the age $j$) of the i.i.d.~copies of the BRW initiated at the location $\bb_{k+1}(i)+\bS_k$ at time $k+1$.  For a particle $v\in V^{(i,k)}_j,\,j\in[n-k-1]$, we use $\bet^{(i,k)}_v$ to denote its relative location in the off-spine BRW that emanates from $\bb_{k+1}(i)+\bS_k$, i.e., $\bet^{(i,k)}_v=\bet_v-(\bb_{k+1}(i)+\bS_k)$.
\end{itemize}
 See Figure \ref{fig:8} for an illustration of the above definitions. We may finally define the point process as
$$D_n=\delta_{\z}+\sum_{k=0}^{n} D_{n,k},$$
where $D_{n,n}=\delta_{\bS_n}$ and for $0\leq k\leq n-1$, 
\begin{align}
    D_{n,k}=\sum_{i\neq w_{k+1}}\sum_{u\in V^{(i,k)}_{n-k-1}}\delta_{\bb_{k+1}(i)+\bet^{(i,k)}_u+\bS_k}.\label{eq:Dnk}
\end{align}
Note that different from \citep{luo2025precise}, the point process is centered at the origin instead of the endpoint $\bS_n$ of the spine.
% There are finitely many jumps, so for each $k$, $\bb_k(i)=-\infty$ for $i$ large enough (i.e., if there is no jump). 

\begin{figure}[!htbp]
    \centering
    \includegraphics[width=\linewidth,trim={2.5cm 2.4cm 2.5cm 1.5cm},clip]{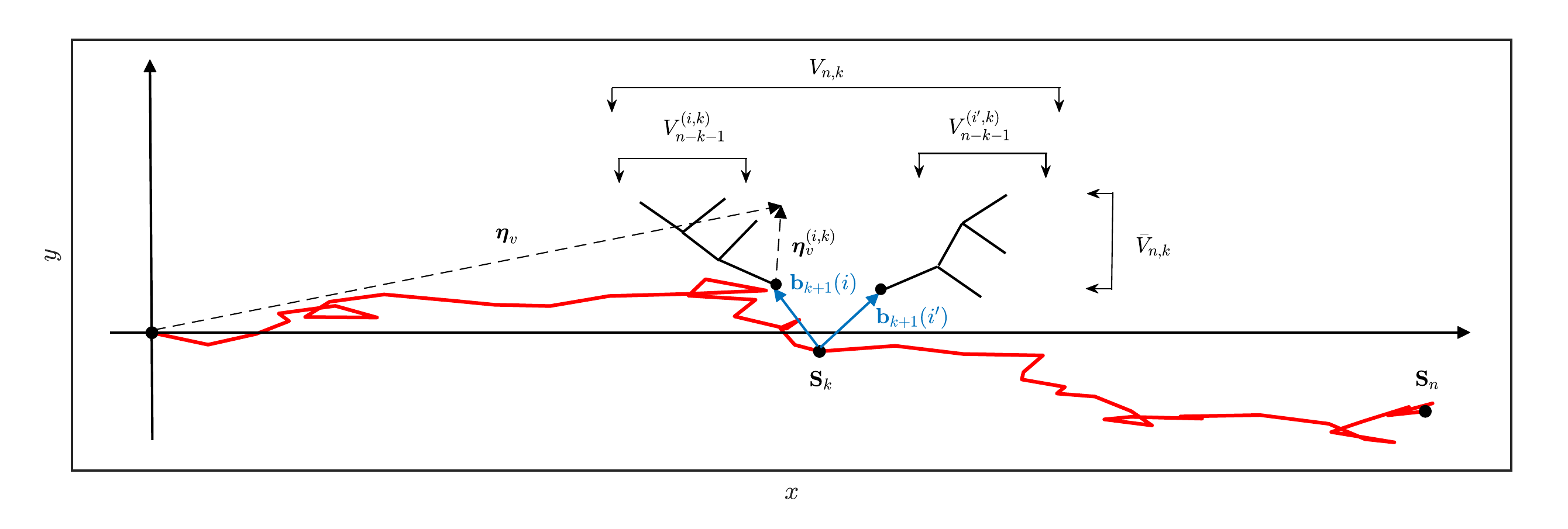}
    % \includegraphics[width=0.29\linewidth,angle=90,trim={7.5cm 2.5cm 7cm 1.5cm},clip]{figures/Schematic_notations.pdf}
    %\vspace{-65pt}
    \caption{A pictorial demonstration of frequently used notations for the BRW with spine, with $d=2$. The red trajectory represents the spine random walk $\{\bS_k\}_{1\leq k\leq n}$. The blue arrows indicate the bones. The black trajectories represent off-spine BRWs, whose starting points are the tips of the bones.}
    \label{fig:8}
\end{figure}

The philosophy in describing (or simulating) the point process $D_n$ is as follows (analogous to Section \ref{Sub:spine_MLP}). First, a simple random walk $\{\bS_k\}_{1\leq k\leq n}$ is formed as the spine, which is the generator of fast particles. Second, consider the adjacent jumps $\{\bb_k(i)\}$. These jumps are not independent of the random walk $\{\bS_k\}$, but only constitute a one-step jump for each trajectory, which means that its contribution is negligible in the long run, a fact that can be easily justified using Borel--Cantelli arguments. Third, the i.i.d.~off-spine BRWs $\{\bet^{(i,k)}\}_{i,k\geq 1}$ are independent of anything else.

Since a first passage event of the form $\{\tau_x=n\}$ requires that all past trajectories (strictly) before time $n$ do not enter $B_x$, we need a variant of the point process \eqref{eq:Dnk} that further incorporates all past locations of a trajectory. 
For $0\leq k\leq n-1$, let $V_{n,k}=\bigcup_{i\neq w_{k+1}}V^{(i,k)}_{n-k-1}$ denote the set of all particles at time $n$ incorporated by $D_{n,k}$, i.e., those at time $n$ who left the spine at time $k$.
Let also $\bar{V}_{n,k}=\bigcup_{i\neq w_{k+1}}\bigcup_{j=0}^{n-k-1}V^{(i,k)}_{j}$ be the collection of all particles in $V_{n,k}$ together with their ancestors from the past times $k+1,\dots,n$. 
Define for $0\leq k\leq n-1$, 
\begin{align}
    \bar{D}_{n,k}=\sum_{i\neq w_{k+1}}\sum_{u\in \bar{V}_{n,k}}\delta_{\bb_{k+1}(i)+\bet^{(i,k)}_u+\bS_k}\label{eq:bar D}
\end{align}
and $\bar{D}_{n,n}=D_{n,n}=\delta_{\bS_n}$.

The following lemma describes the law of the spine random walk $\{\bS_k\}_{1\leq k\leq n}$ under $\q$. The result is well-known, but for completeness we provide a proof in general dimensions in Appendix \ref{app:proofs}.

\begin{lemma}\label{lemma:S_k}
  Let $\q_{\mathrm{spine}}$ denote the increment law of the random walk $\{\bS_k\}_{k\in\N}$ under $\q$ with $\bth=\hat{\bc}_2$, and $\p_\bxi$ denote the law of $\bxi$ under $\p$. The following holds.
  \begin{enumerate}[(i)]
      \item ${\d\q_{\mathrm{spine}}}/{\d\p_\bxi}(\bx)\propto e^{\hat{\bc}_2\cdot\bx}$; in particular, the random walk $\{\bS_k\}_{k\in\N}$ is non-lattice.
      \item  $\q_{\mathrm{spine}}$ has mean $(\hat{c}_1,\z)$ and finite variance.
      \item For $n=x/\hat{c}_1+O(\sqrt{x})$, 
    $$\q(\bS_n\in B_\bx)\asymp x^{-d/2}\quad\text{ as }x\to\infty.$$
  \end{enumerate}
\end{lemma}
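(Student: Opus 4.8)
For part (i) the plan is to carry out the change-of-measure bookkeeping coming from the spine construction. I would first write down the joint law under $\q$, at a generic generation, of the triple $(N,\{\bxi_n\}_{n\in[N]},w)$ consisting of the offspring count, the displacements, and the index selected to continue the spine: combining the size-biased offspring/displacement derivative \eqref{eq:dq/dp on spine} with the selection rule $\q(w=j\mid N,\{\bxi_n\})=e^{\bth\cdot\bxi_j}\big/\sum_{n=1}^N e^{\bth\cdot\bxi_n}$ collapses the Radon--Nikodym derivative (against the nominal law of $(\zeta,\{\bxi_n\}_{n\in[\zeta]})$ times counting measure on the index) to the clean product $e^{\bth\cdot\bxi_w-\psi(\bth)}$. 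Summing over $N$ and $w$ and integrating out the non-selected increments---each contributing a factor $1$---then leaves $\frac{\d\q_{\mathrm{spine}}}{\d\p_\bxi}(\bx)=\rho\,e^{-\psi(\bth)}e^{\bth\cdot\bx}=e^{\bth\cdot\bx}/\phi_\bxi(\bth)\propto e^{\hat{\bc}_2\cdot\bx}$, using $\E[\zeta]=\rho$ and $e^{\psi(\bth)}=\rho\,\phi_\bxi(\bth)$. For the non-lattice assertion: $|\widehat{\q_{\mathrm{spine}}}(\bx)|=1$ for some $\bx\neq\z$ would force $e^{\ui\bx\cdot\bxi}$ to be $\q_{\mathrm{spine}}$-a.s.\ constant, hence $\p_\bxi$-a.s.\ constant since the two laws are mutually absolutely continuous (the density is strictly positive and, by (A3), finite), contradicting (A4).

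For part (ii) I would use convex conjugacy. Since $\phi_\bxi$ is finite on all of $\R^d$ by (A3), $\log\phi_\bxi$ is smooth and $I$ is its Legendre transform; the defining relation $\hat{\bc}_2=\nabla I(\hat{c}_1,\z)$ is, by conjugacy, equivalent to $(\hat{c}_1,\z)=\nabla\log\phi_\bxi(\hat{\bc}_2)$. Differentiating $\bth\mapsto\int e^{\bth\cdot\bx}\p_\bxi(\d\bx)$ under the integral sign (licit because $\phi_\bxi$ is finite in a neighbourhood of $\hat{\bc}_2$) gives $\E^\q[\bS_1]=\nabla\log\phi_\bxi(\hat{\bc}_2)=(\hat{c}_1,\z)$ and $\cov^\q(\bS_1)=\nabla^2\log\phi_\bxi(\hat{\bc}_2)$, which is finite; I would moreover note that this covariance is positive definite, for otherwise $\bxi$---equivalently its tilt, which has the same support---would be concentrated on a proper affine hyperplane $\{\by:\bv\cdot\by=c\}$ with $\bv\neq\z$, again contradicting (A4) (take $\bx=t\bv$).

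Part (iii) is the step where an actual limit theorem enters, and the one I expect to require the most care. Set $\bmu_0=(\hat{c}_1,\z)$ and let $\Sigma=\cov^\q(\bS_1)$, positive definite by part (ii). For $n=x/\hat{c}_1+O(\sqrt x)$ we have $n\bmu_0-\bx=O(\sqrt x)\,\be$, i.e.\ $|n\bmu_0-\bx|=O(\sqrt n)$, so the fixed-radius ball $B_\bx$ lies within $O(\sqrt n)$ of the mean of $\bS_n$: this is the bulk (central limit) regime, not a large-deviation one. I would then invoke a multidimensional local central limit theorem for i.i.d.\ sums of non-lattice vectors with nondegenerate covariance (Stone-type), applied to balls of fixed radius, to get
\begin{align*}
\q(\bS_n\in B_\bx)=(1+o(1))\,\frac{\mathrm{Vol}(B_1)}{(2\pi n)^{d/2}\sqrt{\det\Sigma}}\,\exp\!\Big(-\tfrac{1}{2n}\big\langle\bx-n\bmu_0,\,\Sigma^{-1}(\bx-n\bmu_0)\big\rangle\Big).
\end{align*}
Since $|\bx-n\bmu_0|=O(\sqrt n)$ and $\|\Sigma^{-1}\|$ is bounded, the exponent stays in a fixed interval $[-C,0]$ for some $C>0$, so the exponential factor is $\asymp 1$; combined with $n\asymp x$ this yields $\q(\bS_n\in B_\bx)\asymp x^{-d/2}$.

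\textbf{Main obstacle.} Parts (i) and (ii) are routine change-of-measure and convex-duality computations; the only genuine care is in part (iii), namely invoking the correct form of the local limit theorem---one valid for non-lattice (not necessarily absolutely continuous) laws on $\R^d$ and for sets rather than points---and checking that the window $n=x/\hat c_1+O(\sqrt x)$ really does keep the target in the bulk, so that the Gaussian mass of $B_\bx$ is bounded away from $0$ and $\infty$ after the $n^{-d/2}$ normalisation. If one only wants the two-sided bound $\asymp$ rather than the sharp leading constant, it suffices to combine an anti-concentration estimate $\sup_{\by}\q(\bS_n\in B_1(\by))\ll n^{-d/2}$ with a local-CLT lower bound valid in the bulk, which is a somewhat softer input.
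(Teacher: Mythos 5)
Your proposal is correct and follows essentially the same route as the paper's proof: the same spine change-of-measure bookkeeping for (i), the standard exponential-tilt identities $\E^\q[\bS_1]=\nabla\log\phi_\bxi(\hat{\bc}_2)=(\hat c_1,\z)$ with finite covariance for (ii), and Stone's non-lattice local CLT in the bulk window $n=x/\hat c_1+O(\sqrt x)$ for (iii). The only difference is that you spell out details the paper leaves implicit (the explicit normalisation $e^{\bth\cdot\bx}/\phi_\bxi(\bth)$, the mutual absolute continuity argument for the non-lattice property, and the positive definiteness of $\Sigma$ via (A4)), all of which are consistent with the paper's reasoning.
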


% By choosing $\bth=\hat{\bc}_2=\nabla I(\hat{c}_1,\z)$, we see that the drift of the spine random walk is $(\hat{c}_1,\z)$ under $\q$. 

%So, a persistent mean deviation of size $\ee>0$ is highly unlikely. Thus, we have the following result. 

% Denote by $\bc_2=\nabla I((c_1,\z))$, which is the value of $\bla$ where the supremum \eqref{eq:I long} is attained at $\bx=(c_1,\z)$. 

\subsubsection{Lower deviation}
\label{sec:lowerd}

The goal of this section is to prove Theorem \ref{thm:Ld1}. We first prove the following slightly stronger lower bound, which will also be useful later in Theorem \ref{prop:alg}. Let us use the short-hand notation $\hat{\bc}_1=(\hat{c}_1,\z)\in\R^d$.

\begin{proposition}\label{prop:LB}
    For each fixed integer $t$, we have
    \begin{align}
        \p\Big(\tau_x=\lfloor\frac{x}{\hat{c}_1}\rfloor-t\Big)\gg x^{-d/2}e^{-\frac{x}{\hat{c}_1}(I(\hat{\bc}_1)-\log\rho)}\quad\text{ as }x\to\infty.\label{eq:LB}
    \end{align}
\end{proposition}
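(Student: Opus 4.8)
The plan is to prove \eqref{eq:LB} by changing measure to $\q=\q_{\hat{\bc}_2}$ with $\hat{\bc}_2=\nabla I(\hat{\bc}_1)$ and exploiting the spine decomposition of Section~\ref{sec:spineal}. Set $n=\lfloor x/\hat{c}_1\rfloor-t$, so $\hat{c}_1 n=x+O(1)$, and recall $I(\hat{\bc}_1)-\log\rho>0$ (quantitatively, \eqref{eq:ccc}). Since $\{\tau_x=n\}\in\sF_n$ and $\d\q/\d\p|_{\sF_n}=W_n:=W_n^{\hat{\bc}_2}>0$ $\q$-a.s., we have $\p(\tau_x=n)=\E^\q[\bone_{\{\tau_x=n\}}W_n^{-1}]\geq\E^\q[\bone_G W_n^{-1}]$ for any event $G$ on the spine-augmented space with $G\subseteq\{\tau_x=n\}$. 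It therefore suffices to construct such a $G$ with (i) $W_n\leq M e^{n(I(\hat{\bc}_1)-\log\rho)}$ on $G$ for a constant $M$ — which, since $\hat{c}_1 n=x+O(1)$, gives $W_n^{-1}\gg e^{-\frac{x}{\hat{c}_1}(I(\hat{\bc}_1)-\log\rho)}$ on $G$ — and (ii) $\q(G)\gg x^{-d/2}$; multiplying (i) and (ii) yields \eqref{eq:LB}.

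I would take $G$ to be the intersection of four events. (a) A \emph{good-spine} event for the walk $\{\bS_k\}$, whose increment law under $\q$ is given by Lemma~\ref{lemma:S_k}: fixing a large constant $K$, require that $\bS_{n-K}$ lies in the unit ball around its mean $\hat{c}_1(n-K)\be$, that $\{\bS_k\}_{0\leq k\leq n-K}$ stays in the tube $\{|\bS_k-\hat{c}_1 k\be|\leq C\sqrt{\min(k,n-k)+1}\}$, and that the final $K$ increments follow a fixed ``window'' that steers the spine into $B_x$ exactly at time $n$ while keeping $\bS_k\notin B_x$ for $n-K\leq k<n$; since the tube keeps $\bS_k$ at distance $\gg 1$ from $x\be$ for $k\leq n-K$, this event already forces the spine's own first passage time to equal $n$. (b) A bounded-offspring event: the spine offspring in generations $n-K,\dots,n-1$ and the total sizes up to time $n$ of the (finitely many) off-spine subtrees branching off in those generations are all $\leq L$. (c) Every point of $\bar{D}_{n,k}$ lies outside $B_x$ at all times $<n$, for every $k$. (d) $W_n\leq M e^{n(I(\hat{\bc}_1)-\log\rho)}$. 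Events (a)--(c) together imply $\tau_x=n$. By Lemma~\ref{lemma:S_k}(i)--(ii) and the local central limit theorem, $\q(\bS_{n-K}\in B_1(\hat{c}_1(n-K)\be))\asymp x^{-d/2}$; a standard bridge (Kolmogorov--Doob) estimate shows the tube constraint then holds with conditional probability $\geq 1-\ee$, and the independent final-window constraint has a positive constant probability, obtained by routing the last $O(1)$ increments around $x\be$ using non-latticeness (A4) and the full exponential moments (A3). Hence event (a) has $\q$-probability $\asymp x^{-d/2}$, and event (b)—which is independent of (a)—has positive probability by (A1) (size-bias has finite mean) and $\rho<\infty$.

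It then remains to show that, conditionally on (a) and (b), events (c) and (d) each hold with probability bounded below by a positive constant. For (c): on (a), the points of $\bar{D}_{n,k}$ start at distance $\hat{c}_1(n-k)+O(\sqrt{n-k})$ from $x\be$, so a first-moment (many-to-one) bound gives $\E^\q[\#\{\text{points of }\bar{D}_{n,k}\text{ in }B_x\text{ before time }n\}\mid\text{(a)}]\ll e^{-(n-k)(I(\hat{\bc}_1)-\log\rho)+O(\sqrt{n-k})}$; since $I(\hat{\bc}_1)-\log\rho>0$, these sum over $k\leq n-K$ to $<\ee$ once $K$ is large, and the remaining finitely many levels $k>n-K$ contribute only $O(1)$ points by (b), each in $B_x$ with probability $\leq p^\ast<1$ uniformly (by (A4) and the spine being at distance $\geq\delta>0$ from $x\be$ there). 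For (d): decomposing $W_n$ into the spine term plus the i.i.d.-over-$k$ off-spine subtree contributions and using many-to-one, $\E^\q[W_n-(\text{spine term})\mid\text{(a)}]\asymp\sum_{k=0}^{n-1}e^{\hat{\bc}_2\cdot\bS_k-(k+1)\psi(\hat{\bc}_2)}$, which by \eqref{eq:psibth} and $\hat{c}_1 n=x+O(1)$ is $\asymp e^{n(I(\hat{\bc}_1)-\log\rho)}\sum_{j\geq 1}e^{-(j-1)(I(\hat{\bc}_1)-\log\rho)+\hat{\bc}_2\cdot(\bS_{n-j}-\hat{c}_1(n-j)\be)}$; on (a) the fluctuation $|\bS_{n-j}-\hat{c}_1(n-j)\be|=O(\sqrt{j+1})$, so the series converges and $\E^\q[W_n\mid\text{(a),(b)}]=O(e^{n(I(\hat{\bc}_1)-\log\rho)})$, where we also used that the spine term itself is $\asymp e^{n(I(\hat{\bc}_1)-\log\rho)}$ because $\bS_n\in B_x$ pins $\hat{\bc}_2\cdot\bS_n$ to $x(\hat{\bc}_2\cdot\be)+O(1)$. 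Markov's inequality then yields (d) with conditional probability $\geq 1-c_0/2$ for $M$ large, where $c_0>0$ is the lower bound obtained for (c); intersecting, $\q(G)\gg x^{-d/2}$, which would complete the proof.

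The main obstacle is precisely this simultaneous control of the off-spine mass: both the avoidance estimate (c) and the bound (d) reduce to convergence of series of the form $\sum_j e^{-j(I(\hat{\bc}_1)-\log\rho)+O(\sqrt{j+1})}$, which rests on (i) the strict gap $I(\hat{\bc}_1)-\log\rho>0$ coming from $\hat{c}_1>c_1$, and (ii) the spine behaving like a \emph{bridge} near its endpoint, so that its fluctuation $j$ steps before time $n$ is only $O(\sqrt{j+1})$ and not $O(\sqrt x)$. Isolating the finitely many subtrees branching off within $O(1)$ of time $n$, where $B_x$ is genuinely close and a first-moment bound does not suffice, and handling them via non-latticeness, is the delicate point.
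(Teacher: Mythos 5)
Your overall strategy is the same as the paper's: tilt to $\q$ via the spine decomposition, build an event $G\subseteq\{\tau_x=n\}$ on which the likelihood ratio $W_n$ is at most a constant times $e^{n(I(\hat{\bc}_1)-\log\rho)}$, get $\q(G)\gg x^{-d/2}$ from a local CLT for the spine, and control the off-spine mass by a conditional first moment plus Markov. The gap is in your good-spine event (a). You require the spine to stay in the tube $\{|\bS_k-\hat{c}_1k\be|\leq C\sqrt{\min(k,n-k)+1}\}$ for all $k\leq n-K$ and claim this costs only a factor $1-\ee$ given the endpoint pinning. That is false: a square-root envelope is \emph{not} an event of uniformly positive probability. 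By the law of the iterated logarithm (equivalently, the Kolmogorov--Erd\H{o}s--Feller--Petrowsky test, or the Ornstein--Uhlenbeck eigenvalue argument after the exponential time change), the probability that a centered finite-variance walk stays within $C\sqrt{k+1}$ of its mean path for all $k\leq m$ decays like $m^{-\gamma(C)}$ with $\gamma(C)>0$, and conditioning on the bridge pinning does not change this local behaviour at either end. The Kolmogorov--Doob maximal inequality you invoke only controls deviations at the global scale $C\sqrt{n}$, not at the running scale $\sqrt{\min(k,n-k)}$. Consequently your event (a) has $\q$-probability of order $x^{-d/2-\gamma}$ for some $\gamma>0$, and the bound you obtain is $\p(\tau_x=n)\gg x^{-d/2-\gamma}e^{-\frac{x}{\hat{c}_1}(I(\hat{\bc}_1)-\log\rho)}$, which misses exactly the polynomial sharpness that the proposition asserts and that the matching upper bound and the algorithmic analysis in Section \ref{sec:alg} require. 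You cannot simply relax to the $C\sqrt{n}$ tube either: then the fluctuation of $\bS_{n-j}$ relative to the endpoint is only $O(\sqrt{n})$ for $j=O(\sqrt n)$, and your series $\sum_j e^{-j(I(\hat{\bc}_1)-\log\rho)+O(\sqrt n)}$ in steps (c)--(d) is no longer $O(1)$ times the spine term.

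The paper's resolution of exactly this tension is to use a \emph{linear} backward envelope instead of a square-root one: it requires $S_{n-1}-S_{n-1-\ell}>\bar{c}_1\ell$ with $\bar{c}_1<\hat{c}_1$ chosen by continuity of $\nabla I$ so that the strict gap \eqref{eq:ccc} holds. A walk with drift $\hat{c}_1$ stays above the line of slope $\bar{c}_1$ (after a bounded head start) with probability bounded below, so this costs only $O(1)$, and the slack $I(\hat{c}_1)-\log\rho>(\hat{c}_1-\bar{c}_1+2\ee_1)\bce$ keeps the exponential series for the avoidance estimate and for the bound on $W_n$ geometrically summable even though the allowed deviation grows linearly in the time-to-go. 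A second, more minor point: for $k\leq n-K$ you leave the bones unconstrained and take their conditional expectation; since the bones are size-biased jointly with the spine increment, conditioning on your spine event perturbs their law, and under the weaker assumptions (A1')--(A3') (under which the paper also uses this proposition) the relevant expectation need not even be finite. The paper sidesteps this by constraining the bones as part of the event ($E_{4,2}$, $E_{4,3}$ via Lemma \ref{lemma:b_k}), which only needs a Markov bound with $\delta$ moments. Your treatment of the last $K$ generations (events (b)--(c)) is also sketchier than the paper's $E_{3,1}$, $E_{4,1}$, $E_{6,1}$, which give a deterministic geometric guarantee rather than a ``uniform $p^{\ast}<1$'' per particle, but that part is repairable; the square-root tube is the step that genuinely fails.
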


The main difficulty here is that, in addition to placing one particle in the ball $B_x$ at time $\lfloor x/\hat{c}_1\rfloor -t$, one also needs to ensure that none of the previous particles ever entered the ball $B_x$ strictly before time $\lfloor x/\hat{c}_1\rfloor-t$. As a result, we need to carefully construct an event on which $\tau_x=\lfloor x/{\hat{c}_1}\rfloor-t$. We need a few preparations before going to the proof. For notational simplicity, we write $n=\lfloor x/{\hat{c}_1}\rfloor-t$.

The intuition is to construct the event $\{\tau_x=n\}$ with $n=x/\hat{c}_1+O(1)$ by isolating a path with the following properties under $\q$.
\begin{enumerate}[(i)]
    \item It holds that $\bS_n\in B_x$.
    \item The increments $\bb_{n-\ell}(i),~i\neq w_{n-\ell},~1\leq \ell\leq L$ are constrained; we want to ensure that the BRW does not visit the target $B_x$ at least at times $n-1,n-2,\dots,n-L$.
    \item The first increments of the spine grow at least at speed $\bar{c}_1\in(c_1,\hat{c}_1)$ (i.e., $S_{n-1}-S_{n-\ell}\geq \bar{c}_1(\ell-1)$ for all $\ell\in[L,m_n]$). We will discuss the choice of $m_n$ momentarily. But in principle, even $m_n=n$ is feasible since $\bar{c}_1<\hat{c}_1$.
    \item All bones are constrained. Namely, we impose constraints on $\bb_{n-\ell}(i)$ for $i\neq w_{n-\ell}$ for $L\leq\ell\leq m_n$. Again, the constraint will be chosen so that this happens with probability bounded away from zero even if $m_n=n$.
\end{enumerate}
Having (i)--(iv) in place, the off-spine BRWs emanating from the bones are standard BRWs, which will grow at speed $c_1$ (recall that $c_1<\bar{c}_1<\hat{c}_1$). We simply need to apply a local CLT (Lemma \ref{lemma:S_k}(iii)) to verify $\bS_n\in B_x$. This is where choosing $m_n=n^{1/3}$ is helpful. On the one hand, $n^{1/3}$ is large enough so that, given the constraints in the last $m_n$ steps, $B_x$ is very likely not reached strictly before time $n$. On the other hand, $n^{1/3}$ is small enough so that $\bS_{n-n^{1/3}}+(\bS_n-\bS_{n-n^{1/3}})\in B_x$ still is subject to a local CLT provided that
\begin{enumerate}
    \item [(v)] $\n{\bS_n-\bS_{n-n^{1/3}}}\leq Ln^{1/3}$,
\end{enumerate}
which has a large probability under $\q$ for $L$ large (it is important that $n^{1/3}=o(n^{1/2})$ to apply the local CLT).

With the elements (i)--(v) in place, the proof of Proposition \ref{prop:LB} proceeds by bounding 
$$\frac{\d\p}{\d\q}\gg \exp(-n(I(\hat{c}_1)-\log\rho))$$
on a suitable event informed by (i)--(iv), then taking expectations under $\q$ incorporating (v) will lead to the desired lower bound \eqref{eq:LB}.

In preparation for executing this approach, we need basic bounds related to the likelihood ratio under $\q$. This is given by the next lemma, whose proof is delayed until Appendix \ref{app:proofs}.

\begin{lemma}\label{lemma:b_k}
Let $\ee>0$ be arbitrary. Then there exists $\delta_1>0$ such that uniformly for $\ell_0\geq 0$,
 $$\sum_{\ell\geq \ell_0}\q\bigg(\sum_{i=1}^{N_\ell} e^{\hat{\bc}_2 \cdot \bb_{\ell}(i)}\geq e^{\ee\ell }\bigg)\ll e^{-\delta_1\ell_0}.$$
\end{lemma}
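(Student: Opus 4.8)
The plan is to reduce the statement to a single-increment estimate via a union bound in $\ell$, and then control each term $\q\big(\sum_{i=1}^{N_\ell}e^{\hat{\bc}_2\cdot\bb_\ell(i)}\geq e^{\ee\ell}\big)$ using a Markov-type inequality applied to a suitable fractional moment. First I would recall that under $\q$, by the spine construction \eqref{eq:dq/dp on spine}, the law of the adjacent jumps $(N_\ell,\{\bb_\ell(i)\}_{i\in[N_\ell]})$ at each spine generation is size-biased: conditionally on the size-biased number of offspring $\hat N$ (with $\p(\hat N = k)\propto k p_k$), the $\bb_\ell(i)$ are i.i.d.\ with the $\bxi$-law exponentially tilted by $\hat{\bc}_2$, and the distinguished index $w_\ell$ is uniform. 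Writing $X_i := e^{\hat{\bc}_2\cdot\bb_\ell(i)}$, the quantity of interest is $\Sigma_\ell := \sum_{i=1}^{\hat N}X_i$. Crucially, the tilting is \emph{exactly} by $\hat{\bc}_2$, so $\E^\q[X_i\mid \hat N] = \E[e^{\hat{\bc}_2\cdot\bxi}]\cdot e^{\hat{\bc}_2\cdot\bxi}$-type terms simplify: indeed $\E^\q[e^{\hat{\bc}_2\cdot\bb_\ell(i)}] = \E[e^{2\hat{\bc}_2\cdot\bxi}]/\E[e^{\hat{\bc}_2\cdot\bxi}] = \phi_\bxi(2\hat{\bc}_2)/\phi_\bxi(\hat{\bc}_2)$, which is finite by (A3). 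Hence $\E^\q[\Sigma_\ell] = \E^\q[\hat N]\cdot \phi_\bxi(2\hat{\bc}_2)/\phi_\bxi(\hat{\bc}_2) =: \mu < \infty$, where $\E^\q[\hat N] = \E[\zeta^2]/\rho<\infty$ by (A1).

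The main obstacle is that a plain first-moment Markov bound only yields $\q(\Sigma_\ell\geq e^{\ee\ell})\leq \mu e^{-\ee\ell}$, which is already exponentially small and summable — so in fact, for the stated conclusion (summing from $\ell_0$ and bounding by $e^{-\delta_1\ell_0}$ with \emph{some} $\delta_1>0$, not a prescribed one), a first-moment bound suffices: taking $\delta_1 = \ee/2$, say, we get $\sum_{\ell\geq\ell_0}\mu e^{-\ee\ell} \leq C_\ee e^{-\ee\ell_0} \ll e^{-\delta_1\ell_0}$. The only subtlety is uniformity in $\ell_0$ and the fact that $\mu$ does not depend on $\ell$, both of which are immediate since the bones are i.i.d.\ across $\ell$ under $\q$. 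So concretely, the steps are: (1) invoke the spine decomposition to identify the law of $(N_\ell,\{\bb_\ell(i)\})$ under $\q$ and its size-bias structure; (2) compute $\E^\q\big[\sum_{i=1}^{N_\ell}e^{\hat{\bc}_2\cdot\bb_\ell(i)}\big]$, checking finiteness via (A1) and (A3) — here one must be slightly careful that the size-biasing is on $N$ while the tilting is on the displacements, and these act on independent coordinates of the offspring law, so the expectation factorizes; (3) apply Markov's inequality to each term; (4) sum the resulting geometric series in $\ell$ and absorb constants.

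One small point to watch: the displayed sum runs over $i=1,\dots,N_\ell$ including the spine index $w_{\ell}$, so $\Sigma_\ell$ really is $\sum_{i=1}^{N_\ell}X_i$ over \emph{all} children (the restriction $i\neq w_{k+1}$ appearing in \eqref{eq:Dnk} and \eqref{eq:bar D} is not present here); this only makes the moment computation cleaner, as we need not condition on the value of $w_\ell$. If one instead wanted a bound with a \emph{prescribed} small $\delta_1$ that survives being multiplied by an exponentially growing prefactor elsewhere, one would need higher (integer) moments of $\Sigma_\ell$, available again from (A1) (enough moments of $\zeta$) and (A3) ($\phi_\bxi$ finite everywhere, so all exponential moments of the tilted increment exist); but as stated, the first moment is enough, and that is the route I would take.
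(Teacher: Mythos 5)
Your overall scheme---a per-$\ell$ Markov bound followed by summing a geometric series---is the same as the paper's, and under the lemma's stated hypotheses (A1)--(A4) it does yield the conclusion. However, your description of the law of the adjacent jumps under $\q$ is wrong: by \eqref{eq:dq/dp on spine}, conditionally on the (size-biased) number of children $N_\ell=k$, it is \emph{not} the case that all $k$ displacements are i.i.d.\ tilted by $\hat{\bc}_2$. The correct statement is that $w_\ell$ is uniform on $[k]$, the single displacement $\bb_\ell(w_\ell)$ carries the exponential tilt, and the remaining $k-1$ displacements keep the nominal law of $\bxi$. Consequently your formula $\E^\q[e^{\hat{\bc}_2\cdot\bb_\ell(i)}]=\phi_\bxi(2\hat{\bc}_2)/\phi_\bxi(\hat{\bc}_2)$ holds only for $i=w_\ell$; the correct first moment is $\E^\q\big[\sum_{i=1}^{N_\ell}e^{\hat{\bc}_2\cdot\bb_\ell(i)}\big]=\phi_\bxi(2\hat{\bc}_2)/\phi_\bxi(\hat{\bc}_2)+(\E[\zeta^2]/\rho-1)\phi_\bxi(\hat{\bc}_2)$, or equivalently $e^{-\psi(\hat{\bc}_2)}\E\big[\big(\sum_{u\in V_1}e^{\hat{\bc}_2\cdot\bet_u}\big)^{2}\big]$. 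This is still finite under (A1) (finite second moment of $\zeta$) and (A3) ($\phi_\bxi$ finite on all of $\R^d$), so after correcting the law your argument goes through for the lemma as stated; with, say, $\delta_1=\ee/2$ the uniformity in $\ell_0$ is immediate from stationarity across generations.

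The more substantive difference from the paper is the choice of moment. The paper does not compute the first $\q$-moment; it bounds the fractional $\delta$-th moment $\E^\q\big[\big(\sum_i e^{\hat{\bc}_2\cdot\bb_\ell(i)}\big)^{\delta}\big]$ by transferring back to $\p$ through the Radon--Nikodym derivative, which gives $e^{-\psi(\hat{\bc}_2)}\E\big[\big(\sum_{u\in V_1}e^{\hat{\bc}_2\cdot\bet_u}\big)^{1+\delta}\big]$, and then checks \eqref{eq:1+delta condition} via H\"older and conditioning on $|V_1|$. That requires only $\E[\zeta^{1+\delta}]<\infty$ and finiteness of $\phi_\bxi$ near $\hat{\bc}_2$, which is exactly what makes the lemma survive under the weaker assumptions (A1')--(A3') of Section \ref{sec:alg}, where it is reused (e.g.\ in Proposition \ref{prop:final}(iii)) and where the paper explicitly claims the Section \ref{sec:technical_proofs} results carry over. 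Your first-moment-under-$\q$ route needs $\E[\zeta^2]<\infty$ and $\phi_\bxi(2\hat{\bc}_2)<\infty$, both of which may fail under (A1')--(A3'); so while your proof (once the law is fixed) is fine for the lemma as literally stated, it would not support the later application, and Markov with the $\delta$-th moment (yielding $\delta_1=\ee\delta$) is the more robust choice.
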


We are now ready to define the events that will explicitly inform constraints (i)--(v) above. The following Figure~\ref{fig:enter-label} is provided to guide the reader, and the definitions of the events (of the form $E_i,E_{i,j}$) are given below.

\begin{figure}[!htbp]
    \centering
\includegraphics[width=0.27\linewidth, angle=90,trim={7.5cm 1.5cm 6.8cm 1.5cm},clip]{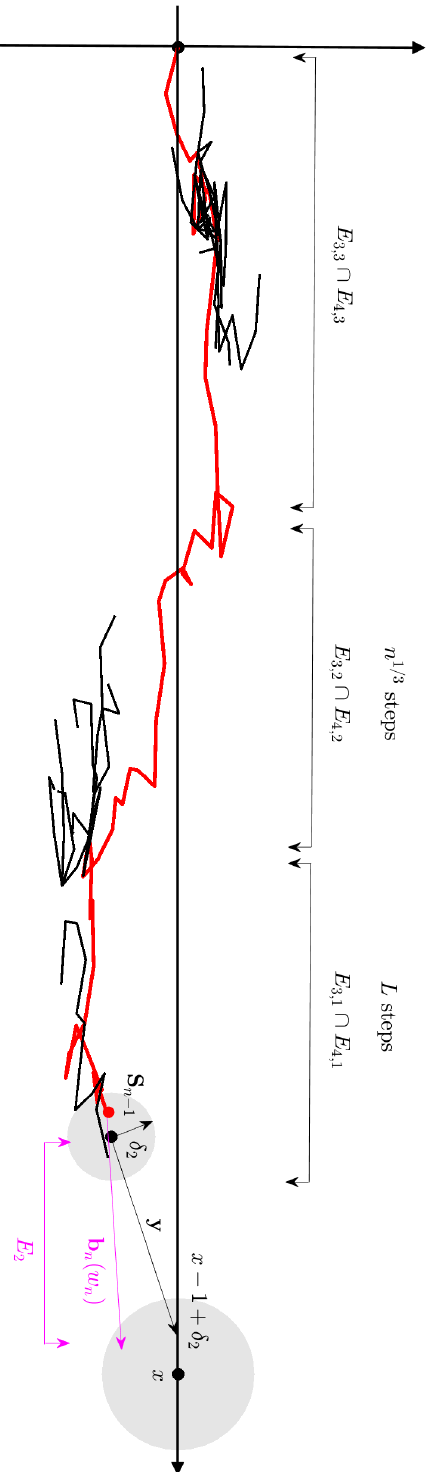}
    % \vspace{-39pt}
    \caption{Schematic of the proof of Proposition \ref{prop:LB} in dimension $d=2$. The events $E_3,E_4$ constrain the spine and the adjacent jumps. For $i=3,4$, $E_{i,1}$ (resp.~$E_{i,2},\,E_{i,3}$) controls the behavior of the first $L$ steps (resp.~next $n^{1/3}$ steps, the remaining steps). The event $E_2$ ensures the first passage event through controlling the final jump at time $n$.}
    \label{fig:enter-label}
\end{figure}

Denote by $\supp\bxi$ the support of the random variable $\bxi$. 
Let $\by\in\supp\bxi\cap ((0,\infty)\times\R^{d-1})$ and pick $\delta_2>0$ small enough so that $\q(\bxi\in B_{\delta_2}(\by))>0$ and $ \by\cdot\be>2\delta_2$. 
% Define 
% \begin{align}
%     \bar{c}_1=\frac{c_1+\hat{c}_1}{2}.\label{eq:bar c_1}
% \end{align}
Write
\begin{align}
    S_k=\frac{\hat{\bc}_2\cdot\bS_k}{\bce}.\label{eq:Sk}
\end{align}
Recall the definition of $\bar{c}_1$ from \eqref{eq:ccc}. Let $b_k$ be the first coordinate of $\bb_k$.  Define the events (with constants $L,L_1,\delta_3$ to be determined)
\begin{align*}
    E_1&=\{\bS_{n-1}\in B_{\delta_2}(x-1+\delta_2,\z)-\by\};\\
    E_2&=\{\bb_n(w_n)\in B_{\delta_2}(\by)\text{ and }\forall i\neq w_n,\,b_n(i)<0\};\\
    E_{3,1}&=\{\forall 1\leq \ell\leq L, S_{n-\ell}-S_{n-1-\ell}\geq \hat{c}_1\};\\
    E_{3,2}&=\{\forall L\leq\ell\leq n^{1/3}, S_{n-1}-S_{n-1-\ell}> \bar{c}_1\ell;S_{n-1}-S_{n-1-n^{1/3}}>\delta_3(\bar{c}_1+\delta_3)n^{1/3}\};\\
    E_{3,3}&=\{\forall n^{1/3}\leq\ell\leq n-1, S_{n-1}-S_{n-1-\ell}> \bar{c}_1\ell\};\\
    E_{3,4}&=\left\{\n{\bS_{n-1}-\bS_{n-1-n^{1/3}}}\leq L_1 n^{1/3}\right\};\\
     E_3&= \bigcap_{j=1}^4E_{3,j};\\
    E_{4,1}&=\{\forall 1\leq \ell\leq L, \forall i\neq w_{n-\ell},b_{n-\ell}(i)<\hat{c}_1\};\\%\sum_ie^{\hat{c}_2  b_{n-\ell}(i)}<1\}\\
    E_{4,2}&=\Big\{\forall L\leq\ell\leq n^{1/3}, \sum_ie^{\hat{\bc}_2\cdot \bb_{n-\ell}(i)}<e^{\ee_1\ell\bce }\Big\};\\
    E_{4,3}&=\Big\{\forall n^{1/3}\leq\ell\leq n-1, \sum_ie^{\hat{\bc}_2\cdot \bb_{n-\ell}(i)}<e^{\ee_1\ell\bce }\Big\};\\
    E_4&= \bigcap_{j=1}^3E_{4,j};\\
    E_5&=\bigcap_{i=1}^4 E_i;\\
    E_{6,1}&=\Big\{\forall 1\leq \ell\leq L,\forall i,\max_{u\in\bar{V}_{n,\ell}} \eta^{(i,n-1-\ell)}_u\leq \ell\hat{c}_1\Big\};\\
    E_{6,2}&=\Big\{\sum_{k=0}^{n-L}\sum_{v\in \bar{V}_{n,k}}e^{\hat{\bc}_2\cdot\bet_v}\leq \frac{1}{2}e^{\bce x}\Big\};\\
    E_6&=E_{6,1}\cap E_{6,2}.
\end{align*}

The intuition is that we try to construct samples of the event $\{\tau_x=n\}$ (with $n=x/\hat{c}_1+O(1)$) by first constructing a well-behaved trajectory of the spine $\{\bS_k\}_{1\leq k\leq n}$ and the adjacent jumps $\{\bb_k(i)\}$. Events $E_1,E_2$ ensure that the spine reaches the target $B_x$ at time $n$, while events $E_3,E_4$ control the trajectory of the spine (along with the adjacent bones) so that the off-spine BRWs attached to the spine are unlikely to reach $B_x$ strictly before time $n$ (recall that on the event $\{\tau_x=n\}$, no particle reaches $B_x$ strictly before time $n$). Events $E_{4,2}$ and $E_{4,3}$ also help to control $\d\p/\d\q$ from below. The control of the off-spine BRWs, which is independent of everything else, is given by the event $E_6$. See also Figure \ref{fig:enter-label} for an illustration.

Let us specify the choices of $L$, $L_1$, and $\delta_3$ here. Since $\E^\q[\xi]<\bar{c}_1$, there exists $\theta<0$ such that $\E^\q[e^{\theta (\xi-\bar{c}_1)}]=1$. 
By the optional stopping theorem applied to the martingale $Y_n=\prod_{k=1}^n e^{\theta (\xi_k-\bar{c}_1)}$, we have 
%As argued in Lemma \ref{lemma:E1E3} below, for some $\theta<0$, %if we condition on $T_L$ on the event $E_{3,1}$, 
$\q(E_{3,2}\mid E_{3,1})\geq 1-e^{\theta L(\hat{c}_1-\bar{c}_1)}$, if $\delta_3\in(0,\hat{c}_1-\bar{c}_1)$ is picked small enough. We also pick $L_1>0$ large enough such that
\begin{align}
    \q(E_{3,4}^{\mathrm{c}})=o(1),\label{eq:E34}
\end{align}
which is possible by Cram\'{e}r's large deviation upper bound (Theorem 2.2.3 of \citep{dembo2009large}).

On the other hand, Lemma \ref{lemma:b_k} implies $\q(E_{4,2}^{\mathrm{c}})\to 0$ as $L\to\infty$. Therefore, for $L>0$ chosen large enough, 
\begin{align}
    \q(E_{3,2}\cap E_{4,2}\mid E_{3,1})\gg 1.\label{eq:q32}
\end{align} Next, we set $L$ large enough such that on the event $E_3\cap E_4$, the off-spine BRWs that left the spine for at least $L$ steps rarely reach as far as $x$.  Recall the definition of $\bar{V}_{n,k}$ from above \eqref{eq:bar D} and that the off-spine BRWs are independent from anything else. We have by \eqref{eq:psibth} and \eqref{eq:ccc}, 
\begin{align*}
   % \E\bigg[\frac{\d\q}{\d\p}\bigg]&\propto 
    &\E^\q\bigg[\sum_{k=0}^{n-L}\sum_{v\in \bar{V}_{n,k}}e^{\hat{\bc}_2\cdot\bet_v}\mid E_5,\{\bS_k\},\{\bb_k(i)\}\bigg]\\
    &=\E^\q\bigg[\sum_{k=0}^{n-L} \int e^{\bce x}\d \bar{D}_{n,k}(x)\mid E_5,\{\bS_k\},\{\bb_k(i)\}\bigg]\\
    &=\bone_{E_5}\sum_{k=0}^{n-L} e^{\hat{\bc}_2\cdot \bS_k}\bigg(\sum_{i=1}^{N_k} e^{\hat{\bc}_2\cdot \bb_{k+1}(i)}\bigg)\sum_{j=0}^{n-k-1}\E^\q\bigg[\sum_{v\in V^{(i,k)}_{j}}e^{\hat{\bc}_2\cdot\bet^{(i,k)}_v}\bigg]\\
    &\ll \sum_{k=0}^{n-L} e^{\bce(x-(n-k)\bar{c}_1)}e^{\ee_1(n-k)\bce} e^{(n-k-1)(\hat{c}_1\bce+\log\rho-I(\hat{c}_1))}\\
    &\ll  e^{\bce x-\delta_4 L},
\end{align*}for some $\delta_4>0$. The same conditioned exponential moment computation will be used several times below. By Markov's inequality and choosing $L$ large enough, we have
\begin{align}
    \q(E_{6,2}^{\mathrm{c}}\mid E_5,\{\bS_k\},\{\bb_k(i)\})<\frac{1}{4}.\label{eq:E62}
\end{align}
\begin{lemma}\label{lemma:E1E3}
    It holds that $\q(E_5)\gg x^{-d/2}$. 
\end{lemma}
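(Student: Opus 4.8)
The plan is to split $E_5$ into a piece that only sees the spine's offspring data at generation $n$ and a piece that only sees generations $1,\dots,n-1$. Under $\q$ the spine blocks are i.i.d.\ across generations, so $E_2$ (which constrains $\bb_n(w_n)$ and its siblings) is independent of $E_1\cap E_3\cap E_4$ (which constrains $\{\bS_k\}_{k\le n-1}$ and $\{\bb_k(i)\}_{k\le n-1}$), whence $\q(E_5)=\q(E_2)\,\q(E_1\cap E_3\cap E_4)$. First I would check $\q(E_2)\gg1$: pick $j_0\ge1$ with $j_0 p_{j_0}>0$, force $N_n=j_0$, place one offspring displacement in $B_{\delta_2}(\by)$ and select it as $w_n$ (possible since $\by\in\supp\bxi$), and place the remaining $j_0-1$ displacements with negative first coordinate (possible because $\bxi$ is centered with non-degenerate first marginal by (A4)); each carries positive probability under the size-biased tilted offspring law, whose first moment is finite by (A1). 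It then remains to prove $\q(E_1\cap E_3\cap E_4)\gg n^{-d/2}$, equivalently $\gg x^{-d/2}$ since $n=\lfloor x/\hat{c}_1\rfloor-t\asymp x$.

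For the latter I would read the spine backwards from generation $n-1$: put $\tilde X_\ell=\bS_{n-\ell}-\bS_{n-1-\ell}$ and $\tilde{\bS}_\ell=\tilde X_1+\dots+\tilde X_\ell=\bS_{n-1}-\bS_{n-1-\ell}$, so the pairs $(\tilde X_\ell,\{\bb_{n-\ell}(i)\}_i)_{\ell\ge1}$ are i.i.d., $\{\tilde{\bS}_\ell\}$ is a non-lattice finite-variance walk with mean $\hat{\bc}_1=(\hat{c}_1,\z)$ by Lemma \ref{lemma:S_k}, $E_1$ says $\tilde{\bS}_{n-1}$ lies in the fixed ball $\mathcal B:=B_{\delta_2}((x-1+\delta_2,\z)-\by)$ whose center is at distance $O(1)$ from $(n-1)\hat{\bc}_1$, and the inequalities in $E_{3,2},E_{3,3}$ become a barrier $\tilde S_\ell>\bar{c}_1\ell$ (recall $\hat{c}_1>\bar{c}_1$). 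I would then decompose $[1,n-1]$ along the three ranges built into the events and condition successively on the three blocks (whose increments are mutually independent):
\begin{itemize}
\item On $[1,L]$: $E_{3,1}\cap E_{4,1}$ is a finite product of per-generation events of positive probability (the spine increment $\ge\hat{c}_1$, all siblings $<\hat{c}_1$), so it has probability $\gg1$, and on it $\tilde S_L-\bar{c}_1 L\ge(\hat{c}_1-\bar{c}_1)L$, a positive but $O(1)$ buffer (one may also demand $\|\tilde{\bS}_L\|$ below a large constant at no asymptotic cost).
\item On $[L,n^{1/3}]$: from the $O(1)$ buffer the walk stays above the barrier with probability $\ge 1-e^{-cL}\gg1$ by optional stopping applied to the martingale $\prod e^{\theta(\xi_k-\bar{c}_1)}$ with $\E^\q[e^{\theta(\xi-\bar{c}_1)}]=1$, $\theta<0$ (this is \eqref{eq:q32}), and simultaneously, because the drift $\hat{c}_1-\bar{c}_1>0$ acts over $n^{1/3}$ steps, it reaches a buffer $\tilde S_{n^{1/3}}-\bar{c}_1 n^{1/3}\asymp n^{1/3}$ with probability $\gg1$; meanwhile $E_{3,4}$ (i.e.\ $\|\tilde{\bS}_{n^{1/3}}\|\le L_1 n^{1/3}$) holds with probability $\to1$ for $L_1$ large by Cram\'er's bound \eqref{eq:E34}, and $E_{4,2}$ holds with probability $\ge1-O(e^{-\delta_1 L})$ by Lemma \ref{lemma:b_k}; intersecting, this block succeeds with probability $\gg1$.
\item On $[n^{1/3},n-1]$: conditionally on $\tilde{\bS}_{n^{1/3}}$, which by $E_{3,4}$ lies within $L_1 n^{1/3}=o(\sqrt n)$ of its mean, the walk over the remaining $\asymp n$ steps must (a) land in $\mathcal B$, of probability $\asymp n^{-d/2}$ by the local CLT in $\R^d$ (the estimate underlying Lemma \ref{lemma:S_k}(iii), valid for a fixed radius and target within $o(\sqrt n)$ of the mean), (b) stay above the barrier, which from the buffer $\asymp n^{1/3}$ fails with probability $\le e^{-cn^{1/3}}=o(n^{-d/2})$ by optional stopping, and (c) satisfy $E_{4,3}$, which fails with probability $O(e^{-\delta_1 n^{1/3}})=o(n^{-d/2})$ by Lemma \ref{lemma:b_k}; since $\q((\mathrm a)\cap(\mathrm b)\cap(\mathrm c))\ge\q((\mathrm a))-\q((\mathrm b)^{\mathrm c})-\q((\mathrm c)^{\mathrm c})$, this block succeeds with probability $\gg n^{-d/2}$.
\end{itemize}
Multiplying the three conditional lower bounds gives $\q(E_1\cap E_3\cap E_4)\gg n^{-d/2}$, hence $\q(E_5)\gg x^{-d/2}$.

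The step I expect to be the crux is the third block: a local central limit theorem coexisting with the barrier. The difficulty is asymmetric --- pinning the walk into the fixed ball $\mathcal B$ already costs the entire $n^{-d/2}$, so any further constraint whose failure probability is merely a positive constant (e.g.\ ``stay above $\bar{c}_1\ell$ from an $O(1)$ buffer'', or ``all bone sums below $e^{\ee_1\ell\bce}$'') cannot be absorbed by a union bound against the $n^{-d/2}$ main term. This is exactly why the middle block is used to inflate the buffer to a divergent level (any rate beyond $\log n$ suffices; the events use $n^{1/3}$), after which the barrier and the bone constraints become $1-o(n^{-d/2})$ events and drop away; and why that block must have length $o(\sqrt n)$, so the walk is still $o(\sqrt n)$ from its mean when the local CLT is applied. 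Granted this architecture, the remaining inputs --- the per-generation positive-probability estimates, \eqref{eq:q32}, \eqref{eq:E34}, Lemma \ref{lemma:b_k}, and the $\R^d$ local CLT --- are routine.
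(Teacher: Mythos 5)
Your proposal is correct and follows essentially the same route as the paper: reverse the spine walk in time, handle the first $L$ steps ($E_{3,1}\cap E_{4,1}$), the next $n^{1/3}$ steps ($E_{3,2}\cap E_{3,4}\cap E_{4,2}$, via the exponential martingale/optional stopping bound, Cram\'er, and Lemma \ref{lemma:b_k}) to build an $\asymp n^{1/3}$ buffer, then apply the local CLT of Lemma \ref{lemma:S_k}(iii) for $E_1$ while discarding $E_{3,3}^{\mathrm{c}}$ and $E_{4,3}^{\mathrm{c}}$ as $o(x^{-d/2})$ events, and finally factor out $E_2$ by independence of the last generation. Your extra explicit verification of $\q(E_2)\gg 1$ and the remark on why constant-probability constraints must be upgraded to $1-o(x^{-d/2})$ before meeting the local CLT are just more detailed renderings of steps the paper treats briefly.
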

\begin{proof}
    We reverse the random walk in time and define $T_\ell=S_{n-1}-S_{n-1-\ell}-\bar{c}_1\ell$. So $\{T_\ell\}_{1\leq\ell\leq n-1}$ is a random walk with positive drift $\hat{c}_1-\bar{c}_1$. The idea is to recursively condition on the random walk at times $\ell=L,n^{1/3}$ while applying the constraints to the events simultaneously.
    
 First, with $L$ fixed, it follows that $\q(E_{3,1}\cap E_{4,1})\gg 1$ since $\E^\q[\bxi]=\hat{\bc}_1$. 
 %We condition on $T_L$ on the event $E_{3,1}\cap E_{4,1}$.
 It follows from \eqref{eq:q32} that
 $$\q(E_{3,2}\cap E_{4,2}\mid E_{3,1}\cap E_{4,1})=\q(E_{3,2}\cap E_{4,2}\mid E_{3,1})\gg 1,$$
 which along with \eqref{eq:E34} further implies that
 $$\q(E_{3,1}\cap E_{3,2}\cap E_{3,4}\cap E_{4,1}\cap E_{4,2})\gg 1.$$
   % The probability that it ever reaches negative is $<1-\delta$. Consider a time $n^{1/3}$ at which with high probability $T_{n^{1/3}}\asymp n^{1/3}$ (conditioned on never reaching negative) and with $L>0$ chosen large enough in the definition of $E_{4,2}$, $\q(E_{4,2}^{\mathrm{c}})<\delta/2$ (possible by Lemma \ref{lemma:b_k} and Borel--Cantelli). 
    Now, conditioned on the trajectory of $\{T_\ell\}_{1\leq \ell\leq n^{1/3}}$ and the event $E_{3,1}\cap E_{3,2}\cap E_{3,4}\cap E_{4,1}\cap E_{4,2}$, the conditional probability of $E_1$ is uniformly $\gg x^{-d/2}$ by Lemma \ref{lemma:S_k}(iii). 
    We arrive at
    $$\q(E_{3,1}\cap E_{3,2}\cap E_{4,1}\cap E_{4,2}\cap E_1)\gg x^{-d/2}.$$

Next, we eliminate events $E_{3,3}^{\mathrm{c}} $ and $ E_{4,3}^{\mathrm{c}}$. Write $\xi=(\hat{\bc}_2\cdot\bxi)/(\bce)$. 
By the optional stopping theorem applied to the martingale $Y_n:=\prod_{k=1}^n e^{\theta (\xi_k-\bar{c}_1)}$ where $\theta<0$ is such that $\E^\q[e^{\theta (\xi-\bar{c}_1)}]=1$, we see that $\q(E_{3,2}\cap E_{3,3}^{\mathrm{c}})\leq e^{\theta \delta_3(\bar{c}_1+\delta_3)n^{1/3}}=o(x^{-d/2})$.
On the other hand, by Lemma \ref{lemma:b_k}, we have $\q(E_{4,3}^{\mathrm{c}})=O(e^{-\delta_1n^{1/3}})=o(x^{-d/2})$, and hence $$\q(E_1\cap E_3\cap E_4)\gg x^{-d/2}.$$ 

Finally, $E_2$ is independent of anything else (since it depends only on the last step of the spine) and $\p(E_2)\gg 1$, so we conclude that $\q(E_5)\gg x^{-d/2}$. 
\end{proof}

\begin{proof}[Proof of Proposition \ref{prop:LB}] We condition on the event $E_5$, where we have showed in Lemma \ref{lemma:E1E3} that $\q(E_5)\gg x^{-d/2}$. 
%It is also clear from the discussion above Lemma \ref{lemma:E1E3} that $\q(\tau_x=n\mid E_5\gg 1$ (there we argued that $v\in V_{n,k},~k\leq n-L$ cannot reach $B_x$; but since $L$ is fixed, the rest can be mitigated with a positive probability), since it only requires that the off-spine BRWs (which are independent of the rest) run at a speed less than $\bar{c}_1$, which is greater than the usual speed $c_1$.
It is also clear from definition that $\tau_x=n$ on the event $E_5\cap E_6$, so it suffices to show that
\begin{align}
   \p(E_5\cap E_6)= \E^\q\bigg[\frac{\d\p}{\d\q}\bone_{E_5\cap E_6}\bigg]\gg x^{-d/2} e^{-n(I(\hat{\bc}_1)-\log\rho)}.\label{eq:E5E6}
\end{align}
To this end, we need to give a lower bound on $\d\p/\d\q$. Recall \eqref{eq:dq/dp}. We have, conditionally by \eqref{eq:psibth} and \eqref{eq:ccc},
\begin{align*}
   % \E\bigg[\frac{\d\q}{\d\p}\bigg]&\propto 
    \E^\q\bigg[\sum_{v\in V_n}e^{\hat{\bc}_2\cdot\bet_v}\mid E_5,\{\bS_k\},\{\bb_k(i)\}\bigg]
    &=\E^\q\bigg[\sum_{k=0}^n \int e^{\bce x}\d D_{n,k}(x)\mid E_5,\{\bS_k\},\{\bb_k(i)\}\bigg]\\
    &=\bone_{E_5}\sum_{k=0}^n e^{\hat{\bc}_2\cdot \bS_k}\bigg(\sum_{i=1}^{N_k} e^{\hat{\bc}_2\cdot \bb_{k+1}(i)}\bigg)\E^\q\bigg[\sum_{v\in V^{(i,k)}_{n-k-1}}e^{\hat{\bc}_2\cdot\bet^{(i,k)}_v}\bigg]\\
    &\ll \sum_{k=0}^n e^{\bce(x-(n-k)\bar{c}_1)}e^{\ee_1(n-k)\bce} e^{(n-k-1)(\hat{c}_1\bce+\log\rho-I(\hat{c}_1))}\\
    &\ll e^{\hat{c}_2 x}.
\end{align*}
%where we have chosen $\ee_1$ small enough in the definition of $E_{4,2}$. 
By Markov's inequality, there exists $L_2>0$ such that
$$\q\bigg(\sum_{v\in V_n}e^{\hat{\bc}_2\cdot\bet_v}\leq L_2e^{\bce x}\mid E_5,\{\bS_k\},\{\bb_k(i)\}\bigg)>\frac{1}{2}.$$
Combining with \eqref{eq:E62} yields
\begin{align}
   \q\bigg(E_{6,2}\cap\Big\{\sum_{v\in V_n}e^{\hat{\bc}_2\cdot\bet_v}\leq L_2e^{\bce x}\Big\}\mid E_5,\{\bS_k\},\{\bb_k(i)\}\bigg) >\frac{1}{4}.\label{eq:E6}
\end{align}
Note also that on the event $\{\sum_{v\in V_n}e^{\hat{\bc}_2\cdot\bet_v}\leq L_2e^{\bce x}\}$, we have by \eqref{eq:dq/dp} and \eqref{eq:psibth},
\begin{align}
    \frac{\d\p}{\d\q}=\frac{1}{\sum_{u\in V_n}e^{\bc_2\cdot\bet_u-n\psi(\bc_2)}}\gg \frac{1}{e^{\hat{\bc}_2\cdot\be x}e^{-n\psi(\bc_2)}}=e^{-n(I(\hat{\bc}_1)-\log\rho)}.\label{eq:u1}
\end{align}
We thus obtain using independence, \eqref{eq:E6}, and \eqref{eq:u1} that
\begin{align*}
    &\E^\q\bigg[\frac{\d\p}{\d\q}\bone_{E_5\cap E_6}\bigg]\\
    &=\E^\q\Bigg[\E^\q\bigg[\frac{\d\p}{\d\q}\bone_{ E_{6,2}}\mid E_5,\{\bS_k\},\{\bb_k(i)\}\bigg]\bone_{E_5\cap E_{6,1}}\Bigg]\\
    &\geq \E^\q\Bigg[\E^\q\bigg[\frac{\d\p}{\d\q}\bone_{ E_{6,2}}\bone_{\{\sum_{v\in V_n}e^{\hat{\bc}_2\cdot\bet_v}\leq L_2e^{\bce x}\}}\mid E_5,\{\bS_k\},\{\bb_k(i)\}\bigg]\bone_{E_5\cap E_{6,1}}\Bigg]\\
    &\gg e^{-n(I(\hat{\bc}_1)-\log\rho)}\E^\q\Bigg[\q\bigg(E_{6,2}\cap\Big\{\sum_{v\in V_n}e^{\hat{\bc}_2\cdot\bet_v}\leq L_2e^{\bce x}\Big\}\mid E_5,\{\bS_k\},\{\bb_k(i)\}\bigg)\bone_{E_5\cap E_{6,1}}\Bigg]\\
    &\gg e^{-n(I(\hat{\bc}_1)-\log\rho)}\q(E_5\cap E_{6,1}).
\end{align*}
Observe that $E_5$ and $E_{6,1}$ are independent and $\q(E_{6,1})\gg 1$, since $E_{6,1}$ only constrains finitely many independent BRWs. Therefore, we conclude using Lemma \ref{lemma:E1E3} that
$$\p(\tau_x=n)\geq  \E^\q\bigg[\frac{\d\p}{\d\q}\bone_{E_5\cap E_6}\bigg]\gg e^{-n(I(\hat{\bc}_1)-\log\rho)}\q(E_5)\q(E_{6,1})\gg x^{-d/2}e^{-n(I(\hat{\bc}_1)-\log\rho)},$$
as desired.
% It follows that
% $$\E^\q\bigg[\frac{\d\q}{\d\p}\mid E\bigg]\ll e^{\hat{c}_2 x-n(\hat{c}_1\hat{c}_2+\log\rho-I(\hat{c}_1))}\asymp e^{n(I(\hat{c}_1)-\log\rho)}.$$
% By (conditional) Markov's inequality, we conclude that $\p(E)\gg x^{-d/2}e^{-n(I(\hat{c}_1)-\log\rho)}$.
\end{proof}

 \begin{proof}[Proof of Theorem \ref{thm:Ld1}]
 The lower bound of Theorem \ref{thm:Ld1} follows immediately from Proposition \ref{prop:LB} applied with $t=1$, so it remains to establish the upper bound.
First, applying the union bound, we have 
\begin{align*}
    \p\Big(\tau_x<\frac{x}{\hat{c}_1}\Big)%&=\sum_{j=1}^{x/\hat{c}_1}\p(\tau_x=j)\\
    &\leq \sum_{j=1}^{\lfloor x/\hat{c}_1\rfloor}\p(\exists v\in V_j,\,\bet_{v}\in B_x)\\
    &\leq \sum_{j=1}^{\lfloor x/\hat{c}_1-\sqrt{x}\rfloor}\rho^j\p(\bS_j\in B_x)+\sum_{j=\lfloor x/\hat{c}_1-\sqrt{x}\rfloor+1}^{\lfloor x/\hat{c}_1\rfloor}\rho^j\p(\bS_j\in B_x).
    % \\
    % &\leq \sum_{j=1}^{\lfloor x/\hat{c}_1\rfloor}\rho^je^{-(1+o(1))x/\hat{c}_1I(\bx/(x/\hat{c}_1))}\\
    % &\ll e^{-(1+o(1))x/\hat{c}_1(I(\bx/(x/\hat{c}_1))-\log\rho)}.
\end{align*}
For the first sum over $1\leq j\leq \lfloor x/\hat{c}_1-\sqrt{x}\rfloor$, we apply Cram\'{e}r's large deviation upper bound, which implies that
\begin{align*}
    \p(\bS_j\in B_x)&\ll \exp\Big(-(\frac{x}{\hat{c}_1}-\sqrt{x})I(\frac{\bx}{x/\hat{c}_1-\sqrt{x}})\Big)\\
    &\leq \exp\Big(-(\frac{x}{\hat{c}_1}-\sqrt{x})\big(I(\hat{\bc}_1)+\frac{\hat{c}_1\bce}{\sqrt{x}/\hat{c}_1-1}\big)\Big)\leq \exp\Big(-\frac{x}{\hat{c}_1}I(\hat{\bc}_1)-\delta_5\sqrt{x}\Big)
\end{align*}for some $\delta_5>0$, 
where we have used the convexity of $I$ and that $\nabla I(\hat{\bc}_1)=\hat{\bc}_2$. For the second sum over $\lfloor x/\hat{c}_1-\sqrt{x}\rfloor<j\leq \lfloor x/\hat{c}_1\rfloor$, we apply the Bahadur--Rao theorem (Theorem 3.7.4 of \citep{dembo2009large} and Remark (c) that follows) which implies that
$$\p(\bS_j\in B_x)\ll x^{-d/2}\exp(-jI(\hat{\bc}_1)- \hat{\bc}_2\cdot(\bx-j\hat{\bc}_1)), \quad\lfloor x/\hat{c}_1-\sqrt{x}\rfloor\leq j\leq \lfloor x/\hat{c}_1\rfloor.$$
Altogether, we obtain
\begin{align*}
    \p\Big(\tau_x<\frac{x}{\hat{c}_1}\Big)&\ll x\rho^{x/\hat{c}_1}e^{-\frac{x}{\hat{c}_1}I(\hat{\bc}_1)-\delta_5\sqrt{x}}+\sum_{j=\lfloor x/\hat{c}_1-\sqrt{x}\rfloor+1}^{\lfloor x/\hat{c}_1\rfloor}\rho^jx^{-d/2}e^{-jI(\hat{\bc}_1)- \hat{\bc}_2\cdot(\bx-j\hat{\bc}_1)}\\
    &\ll x^{-d/2}e^{-\frac{x}{\hat{c}_1}(I(\hat{\bc}_1)-\log\rho)},
\end{align*}
as desired.
% On the other hand, Theorem \ref{thm:1DLD} applied to $M^{\bc_2}_n$ gives that
% $$\p(M^{\bc_2}_{x/\hat{c}_1}>\bx\cdot\bc_2)\geq e^{-(c+o(1))x(I^{\bc_2}(\bx\cdot\bc_2/(x/\hat{c}_1),\z)-\log\rho)}.$$
% Given a trajectory that travels $\bx\cdot\bc_2$ far in the direction $\bc_2$, we apply a standard large deviation estimate \tbl{Refer to prop 16(ii)} to compute the conditional probability that it lies in $B_x$. Let $\bS_n=\bxi_1+\dots+\bxi_n$.
% \begin{align*}
%     \p(\bS_n\in B_x\mid \bS_n\cdot\bc_2>\bx\cdot\bc_2)&=\frac{\p(\bS_n\in B_x)}{\p(\bS_n\cdot\bc_2>\bx\cdot\bc_2)}\\
%     &\gg \frac{n^{-d/2}e^{-nI(x/n,\z)}}{n^{-1/2}e^{-nI^{\bc_2}(\bx\cdot\bc_2/n)}}.
% \end{align*}
% Therefore, with probability 
% $\gg \exp(-(c+o(1))x(I(\bx/(x/\hat{c}_1))-\log\rho)$, the same particle reaches $B_x$ at time $x/\hat{c}_1$.
 \end{proof}

\subsubsection{Upper deviation}
As explained in the introduction, the intuition of the upper large deviation of FPT is similar to \citep{gantert2018large}: in order to realize the event $\{\tau_x>x/\hat{c}_1\}$, we assert that for time up to $\alpha x$ for some $\alpha\in(0,1/\hat{c}_1)$, branching events are restricted and there is a single particle at time $\alpha x$. The location of such a particle at time $\alpha x$ is near $x\by_x$ where $\by_x\in \R^d$ is fixed and $x\to\infty$. After time $\alpha x$, the BRW behaves normally. These events can be formulated as follows.
\begin{align}
    \begin{split}
        \tilde{E}_1&=\{|V_{\lfloor\alpha x\rfloor}|=1\};\\
    \tilde{E}_2&=\{\forall v\in V_{\lfloor\alpha x\rfloor}, \bet_v\in B_{x\by_x}\};\\
    \tilde{E}_3&=\{\tau_x>x/\hat{c}_1\}.
    \end{split}\label{eq:E1E2E3}
\end{align}
To deal with the event $\tilde{E}_1$, we need the following preliminary result on the lower deviation rates of the Galton--Watson process, which are consequences of \citep{athreya2004branching,bansaye2013lower}, as noted in \citep{gantert2018large}. We say that a sequence $\{a_n\}$ is subexponential if $a_n=o(e^{\ee n})$ for any $\ee>0$.

\begin{lemma}\label{lemma:GW LD}
    Assume (A1) and (A5). Then 
    $$\lim_{n\to\infty}\frac{1}{n}\log\p(|V_n|=1\mid S)=-\gamma.$$
    Moreover, for every subexponential sequence $\{a_n\}_{n\in\N}$ with $a_n\to\infty$, we have
    $$\lim_{n\to\infty}\frac{1}{n}\log\p(|V_n|\leq a_n\mid S)=-\gamma.$$
\end{lemma}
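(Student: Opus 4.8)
The plan is to reduce everything to known lower-deviation estimates for Galton--Watson processes. Let $Z_n$ denote the Galton--Watson process underlying the BRW (i.e. $Z_n=|V_n|$). The content of the lemma is the classical fact that, conditioned on survival, the probability of the population being ``abnormally small'' at time $n$ decays exactly like $e^{-\gamma n}$, where $\gamma=-\log\E[\zeta q^{\zeta-1}]$, with $q$ the extinction probability. I would first recall why $\gamma$ is the right constant: the generating function of the offspring law, $f(s)=\E[s^\zeta]$, has derivative at the extinction fixed point satisfying $f'(q)=\E[\zeta q^{\zeta-1}]\in(0,1)$ (strictly less than $1$ by supercriticality, and positive by (A5) since $p_0+p_1>0$ forces $q>0$), and this $f'(q)$ is precisely the per-generation contraction rate governing rare small-population events. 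The references \citep{athreya2004branching,bansaye2013lower} provide exactly this; (A1) (finite second moment, in fact $\E[\zeta\log\zeta]<\infty$ suffices) guarantees the Kesten--Stigum-type regularity needed for the sharp rate.

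The first, and simplest, step handles $\p(Z_n=1\mid S)$. Decompose over the extinction/survival of each of the descendant lines: if there is exactly one particle at time $n$, then that particle's progeny survives (on $S$) while nothing else happened before. More concretely, one shows $\p(Z_n=1)\asymp$ (up to subexponential factors) $(f'(q))^n$, the classical statement being $\lim_n \frac1n\log\p(Z_n=1)=\log f'(q)=-\gamma$; see \citep[Corollary 1]{bansaye2013lower} or the discussion in \citep[Section 2]{gantert2018large}. Then divide by $\p(S)=1-q>0$, a constant, so the conditional limit is the same $-\gamma$. The upper bound direction here is the easy one (a single explicit event lower-bounds $\p(Z_n=1,S)$, while an entropy/generating-function argument upper-bounds $\p(Z_n=1)$); I would cite rather than reprove this.

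The second step is the extension to $\p(Z_n\le a_n\mid S)$ for subexponential $a_n\to\infty$. The lower bound is immediate: $\p(Z_n\le a_n\mid S)\ge\p(Z_n=1\mid S)\to e^{-\gamma n+o(n)}$ in the exponential scale. For the upper bound, write $\p(Z_n\le a_n,S)\le\sum_{k=1}^{a_n}\p(Z_n=k,S)$ and use the known estimate $\p(Z_n=k)\le C\,(f'(q))^n\,\phi(k)$ for a subexponentially-growing $\phi$ — this is exactly the local lower-deviation estimate of \citep{bansaye2013lower} (their bound on $\p(Z_n=k)$ uniform over moderate $k$). Since $a_n$ is subexponential and $\phi$ is subexponential, $\sum_{k\le a_n}\phi(k)\le a_n\max_{k\le a_n}\phi(k)=e^{o(n)}$, so $\p(Z_n\le a_n,S)\le e^{-\gamma n+o(n)}$; dividing by $1-q$ preserves this. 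Taking $\frac1n\log$ and letting $n\to\infty$ gives the matching upper bound, hence the limit. The main obstacle — really the only nontrivial point — is having the \emph{uniform-in-$k$} local estimate $\p(Z_n=k)=e^{-\gamma n+o(n)}$ valid for $k$ up to subexponential size rather than merely the $k=1$ case; this is precisely what \citep{bansaye2013lower} supplies under a log-moment condition implied by (A1), so I would invoke that theorem directly and keep the proof of this lemma to a few lines, emphasizing that it is a reformulation of \citep{athreya2004branching,bansaye2013lower} already used in \citep{gantert2018large}.
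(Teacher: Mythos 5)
Your proposal is correct and takes essentially the same route as the paper: the paper likewise treats the lemma as a known Galton--Watson lower-deviation result, simply noting that (A5) places us in the Schr\"oder case and citing Theorem 2.5 of \citep{gantert2018large}, which rests on exactly the Athreya/Bansaye-type estimates you invoke. One small slip worth fixing: (A5) does not force $q>0$ (take $p_0=0<p_1$, where $q=0$); what it actually yields is $\E[\zeta q^{\zeta-1}]=f'(q)\in(0,1)$, i.e.\ $\gamma<\infty$, which is all your argument needs.
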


\begin{proof}
    It is clear that, under Assumption (A5), $\p(|V_2|=1)>0$. Therefore, the result follows immediately from Theorem 2.5 of \citep{gantert2018large}.
\end{proof}

We also need the following uniform version of upper bounds on FPT. Recall that $q=1-\p(S)$ is the probability of extinction.

\begin{lemma}\label{lemma:uni}
    For $\beta>0$, let
    $U_\beta=\{\bz\in\R^d:I(\bz)\leq \beta\}.$
    Then for any $\ee>0$,
    $$\sup_{\bz\in U_{\log\rho-\ee}}\p(\tau_{x\bz}>x)\to q\quad\text{as }x\to\infty.$$
    \end{lemma}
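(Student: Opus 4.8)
The plan is to split according to survival vs.\ extinction, reduce by compactness to a single sequence of target directions, amplify a weak ``base estimate'' along the tree, and prove that base estimate by a second‑moment argument with a barrier.

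\textbf{Reduction and compactness.} I may assume $\ee<\log\rho$, so that $U:=U_{\log\rho-\ee}$ contains a neighbourhood of $\z$. On $S^{\mathrm c}$ the branching process has finite total progeny a.s., hence only finitely many particles ever, each at an a.s.\ finite displacement (by (A2)); thus $R:=\sup\{\norm{\bet_v}:v\in\bigcup_nV_n\}<\infty$ a.s.\ on $S^{\mathrm c}$, so $\p(\tau_{x\bz}<\infty,S^{\mathrm c})\le\p(R\ge x\norm{\bz}-1,S^{\mathrm c})$. Fixing any $\bz_0\in U$ with $\norm{\bz_0}>0$ gives $\p(\tau_{x\bz_0}>x)\ge q-\p(R\ge x\norm{\bz_0}-1,S^{\mathrm c})\to q$, so $\liminf_x\sup_{\bz\in U}\p(\tau_{x\bz}>x)\ge q$; since $\p(\tau_{x\bz}>x)\le q+\p(\tau_{x\bz}>x,S)$, it remains to prove $\sup_{\bz\in U}\p(\tau_{x\bz}>x,S)\to0$. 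Under (A3), $I$ is a good rate function, so $U$ is a compact subset of the open set $\{I<\log\rho\}$, which lies in the interior of $\{I<\infty\}$. If the claim failed, pick $x_k\to\infty$, $\bz_k\in U$ with $\p(\tau_{x_k\bz_k}>x_k,S)>\delta$ and, along a subsequence, $\bz_k\to\bz_\ast\in U$. Put $m_k=\lfloor x_k\rfloor$ and $x_k\bz_k=m_k\bv_k$, so $\bv_k\to\bz_\ast$ and, by continuity of $I$, $I(\bv_k)\le\log\rho-\ee/2$ for large $k$; since $\{\tau_{x_k\bz_k}>x_k\}\subseteq\{\nexists\,v\in V_{m_k}:\bet_v\in B_1(m_k\bv_k)\}$, it is enough to show $\p(\exists v\in V_{m_k}:\bet_v\in B_1(m_k\bv_k)\mid S)\to1$.

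\textbf{Amplification along the tree.} The heart of the argument is the \emph{base estimate}: there is $p_\ast=p_\ast(\ee)>0$ such that for every sequence $(\ell_k,\bv_k)$ with $\ell_k\to\infty$ and $I(\bv_k)\le\log\rho-\ee/2$, $\liminf_k\p(\exists v\in V_{\ell_k}:\bet_v\in B_1(\ell_k\bv_k))\ge p_\ast$. Granting it, fix $N,n_0\in\N$ and condition on $\sF_{n_0}$: the subtrees of the particles of $V_{n_0}$ are independent BRWs, and on $\{|V_{n_0}|\ge N\}$ I select $N$ of them, rooted at particles with displacements $\bet_{v_i}$. For the $i$‑th subtree, reaching $B_1(x_k\bz_k)$ from $\bet_{v_i}$ in $m_k-n_0$ steps is an instance of the base estimate with target speed $(x_k\bz_k-\bet_{v_i})/(m_k-n_0)\to\bz_\ast$ (pathwise, $n_0$ fixed), hence its $\sF_{n_0}$‑conditional probability is $\ge p_\ast/2$ for all large $k$. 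By conditional independence, $\p(\nexists\,v\in V_{m_k}\text{ in }B_1(x_k\bz_k)\mid\sF_{n_0})\le(1-p_\ast/2)^N$ for large $k$ on $\{|V_{n_0}|\ge N\}$; reverse Fatou, then $n_0\to\infty$ (using $\p(S\setminus\{|V_{n_0}|\ge N\})\to0$), then $N\to\infty$, give $\p(\exists v\mid S)\to1$, contradicting $\p(\cdot,S)>\delta$.

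\textbf{The base estimate.} For $\bv$ with $r:=\log\rho-I(\bv)\ge\ee/2$, let $Z_\ell$ count the $v\in V_\ell$ with $\bet_v\in B_1(\ell\bv)$ whose ancestral trajectory stays within distance $A\bigl(1+\sqrt{k\wedge(\ell-k)}\bigr)$ of the segment $k\mapsto k\bv$, $A$ a large constant. Tilting the increment by $\nabla I(\bv)$ (legitimate since $\bv$ is interior to $\{I<\infty\}$, where $I$ is differentiable) so that its mean is $\bv$, a local CLT, and a ``random bridge stays in the tube with probability $\ge1/2$'' estimate give $\E Z_\ell\gg\ell^{-d/2}e^{\ell r}$. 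Decomposing pairs of distinct particles by the generation $j$ of their most recent common ancestor, the barrier forces that ancestor within $O(\sqrt j)$ of $j\bv$, so its large‑deviation cost is $\asymp e^{-jI(\bv)}$ (not the smaller cost of an anomalously fast early trajectory) while the two subsequent excursions each cost $\asymp e^{-(\ell-j)I(\bv)}$; thus level $j$ contributes $\lesssim\rho^{2\ell-j}e^{-(2\ell-j)I(\bv)}\,\mathrm{poly}(\ell)=e^{2\ell r}e^{-jr}\,\mathrm{poly}(\ell)$, summable in $j$ precisely because $r>0$. Hence $\E Z_\ell^2\ll(\E Z_\ell)^2$, uniformly over the compact set $\{\bv:I(\bv)\le\log\rho-\ee/2\}$, and Paley--Zygmund gives $\p(Z_\ell>0)\ge(\E Z_\ell)^2/\E Z_\ell^2\ge p_\ast$.

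\textbf{Main obstacle.} The delicate point is this last step: in the near‑critical regime where $I(\bz)$ is close to $\log\rho$ the \emph{bare} second moment diverges (already for Gaussian increments once $I(\bz)>\tfrac12\log\rho$), so the curved‑barrier truncation is genuinely necessary, and one must verify that the subexponential factors produced by the tube do not destroy the matching $\E Z_\ell^2\ll(\E Z_\ell)^2$. An alternative is to invoke a local limit theorem for the supercritical non‑lattice BRW, which yields directly $\#\{v\in V_n:\bet_v\in B_1(n\bz)\}\asymp n^{-d/2}\rho^n e^{-nI(\bz)}W$ on $S$ (uniformly for $\bz$ in compact subsets of $\{I<\log\rho\}$), and hence the lemma via the extinction reduction above.
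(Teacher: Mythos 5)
Your argument is correct in outline, but it follows a genuinely different route from the paper's. The paper makes the same reduction to $\sup_{\bz}\p(\tau_{x\bz}>x\mid S)\to 0$, and then runs a bootstrap: grow the BRW for time $\asymp\log x$ to collect $x^{(d-1)/2}$ particles at distance $O(\log x)$ from the origin, use the projected one-dimensional BRW (with a fixed dilation $\bz/(1-\delta_6)$ still satisfying $I<\log\rho$) so that each subtree produces, with probability close to one, a trajectory whose projection onto $\nabla I(\bz/(1-\delta_6))$ reaches the required level, and then a transverse local CLT gives a per-trajectory hitting probability $\gg x^{-(d-1)/2}$; independence over the polynomially many trials drives the conditional failure probability to zero, with uniformity in $\bz$ handled by a uniform lower bound on directional variances of $\bxi$. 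You instead extract a convergent subsequence $\bz_k\to\bz_*$ (so uniformity is obtained by compactness rather than by tracking constants in $\bz$), amplify over a \emph{fixed} number $N$ of subtrees rooted at generation $n_0$, and supply the per-subtree success probability by a constant ``base estimate'' proved via a truncated second-moment/Paley--Zygmund argument with a $\sqrt{k\wedge(\ell-k)}$ tube barrier — in effect re-proving from scratch a Biggins-type positive-density estimate that the paper imports from its companion works (\citealp{blanchet2024first,blanchet2024tightness}). What each buys: your route is more self-contained, needs only $O(1)$ successful subtrees rather than a $x^{-(d-1)/2}$-versus-$x^{(d-1)/2}$ polynomial bookkeeping, and sidesteps explicit uniformity constants; its cost is the truncated second moment, which you only sketch and which is where the real work sits — at the MRCA level the ancestor's displacement $\bw$ costs $e^{-\nabla I(\bv)\cdot\bw}$ while each of the two continuations gains $e^{+\nabla I(\bv)\cdot\bw}$, so the net $e^{+\nabla I(\bv)\cdot\bw}$ is subexponential only because of the tube, and one must check the resulting $e^{O(\sqrt{j})}$ factors are absorbed by $e^{-jr}$ with $r\geq\ee/2$ (they are, and (A1) supplies the offspring second moment for the pair count), so your flagged obstacle is real but surmountable and the stated cancellation mechanism is the right one. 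One caveat applies to both write-ups: your parenthetical claim that $\{I<\log\rho\}$ lies in the interior of $\{I<\infty\}$ is not automatic (e.g.\ an atom of mass $p$ with $-\log p<\log\rho$ at an extreme point of the support), and at such boundary points differentiability of $I$ and small outward dilations of $\bz$ become problematic; the paper's own sketch relies on the fixed dilation $\bz/(1-\delta_6)$ and on $\nabla I$ there, so it glosses the same issue at least as heavily, and neither argument is worse off on this point.
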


    \begin{proof}
The bootstrapping argument resembles the proof of Theorem 1 of \citep{blanchet2024first} and we only sketch the proof. It remains to show
$$\sup_{\bz\in U_{\log\rho-\ee}}\p(\tau_{x\bz}>x\mid S)\to 0\quad\text{as }x\to\infty.$$
First, one run the BRW for a time $\asymp \log x$ to identify $x^{(d-1)/2}$ particles that are $O(\log x)$ close to the origin (independent of the choice of $\bz$). These particles then independently initiate BRW processes for a time of $(1-\delta_6)x$. With some $\delta_6$ small enough such that $I(\bz/(1-\delta_6))<\log\rho$, the probability that such a BRW process, projected onto the direction $\tilde{\bc}_2=\nabla I(\bz/(1-\delta_6))$, reaches $\approx x\bz\cdot\tilde{\bc}_2/\n{\tilde{\bc}_2}$ with a probability close to one. Extract one such trajectory from each BRW that verifies this property. By a local CLT (e.g., Lemma 30 of \citep{blanchet2024tightness}), the probability that such a trajectory reaches $B_x$ is $\gg x^{-(d-1)/2}$. It remains to check that the implicit constant here can be made uniform in the choice of $\bz$ (or equivalently the choice of $\tilde{\bc}_2$). Nevertheless, the constant arising from the local CLT is $\gg \var(\bxi\cdot\tilde{\bc}_2/\n{\tilde{\bc}_2})/\det(\Sigma_\bxi)\gg \var(\bxi\cdot\tilde{\bc}_2/\n{\tilde{\bc}_2})$, which is uniformly bounded from below since the jump $\bxi$ is not supported on a hyperplane (which is a consequence of the non-lattice condition (A4)).     
    \end{proof}

  \begin{proof}[Proof of Theorem \ref{thm:LD2}]
We first prove the lower bound of $\tau_x$.
Recall \eqref{eq:E1E2E3} and consider the event $\tilde{E}=\tilde{E}_1\cap \tilde{E}_2\cap \tilde{E}_3$. It follows from independence that
\begin{align*}
    \p(\tilde{E}\mid S)&=\p(\tilde{E}_1\mid S)\p(\tilde{E}_2\mid \tilde{E}_1,S)\p(\tilde{E}_3\mid \tilde{E}_1,\tilde{E}_2,S)\\
    &\leq \p(\tilde{E}_1\mid S)\p(\bS_{\lfloor\alpha x\rfloor}\in B_{x\by_x})\p\Big(\tau_{\bx-x\by_x}^{(2)}>(\frac{1}{\hat{c}_1}-\alpha)x\Big),
\end{align*}
where $\tau_{\bx-x\by_x}^{(2)}$ refers to the FPT to the ball $B_{2}(\bx-x\by_x)$. Applying Lemma \ref{lemma:GW LD} and the large deviation rate for the random walk $\{\bS_k\}_{k\geq 0}$ yields 
\begin{align}
    \lim_{x\to\infty}\frac{1}{x}\log\p(\tilde{E}\mid S)\geq -\gamma\alpha-\alpha I\big(\frac{\by_x}{\alpha}\big)+ \lim_{x\to\infty}\frac{1}{x}\log\p\Big(\tau_{\bx-x\by_x}^{(2)}>(\frac{1}{\hat{c}_1}-\alpha)x\Big).\label{eq:ld1}
\end{align}
For $\alpha,\by_x$ satisfying the constraint in \eqref{eq:max}, i.e., $I((\be-\by_x)/(1/\hat{c}_1-\alpha))\geq\log\rho$, we have by a union bound argument over the set of particles at time $(1/\hat{c}_1-\alpha)x$ along with Cram\'{e}r's upper bound that
$$\lim_{x\to\infty}\frac{1}{x}\log\p\Big(\tau_{\bx-x\by_x}^{(2)}>(\frac{1}{\hat{c}_1}-\alpha)x\Big)\geq 0.$$
Therefore, by \eqref{eq:ld1},
$$\lim_{x\to\infty}\frac{1}{x}\log\p\Big(\tau_x>\frac{x}{\hat{c}_1}\mid S\Big)\geq\lim_{x\to\infty}\frac{1}{x}\log\p(\tilde{E}\mid S)\geq -\Big(\gamma\alpha+\alpha I\big(\frac{\by_x}{\alpha}\big)\Big).$$
Taking a supremum over $\alpha,\by_x$ on the right-hand side thus yields
$$\lim_{x\to\infty}\frac{1}{x}\log\p\Big(\tau_x>\frac{x}{\hat{c}_1}\mid S\Big)\geq -T(x,\hat{c}_1;\rho,\gamma).$$

For the upper bound on $\tau_x$, we follow the same approach as \citep{gantert2018large}. Define
$$T_x=\inf\{t\geq 0:|V_{\lfloor tx\rfloor}|\geq x^{3}\}.$$
Let $\ee_2>0$ and introduce the set
$$F=F(\ee_2)=\{k\ee_2\}_{1\leq k\leq \lceil (\hat{c}_1\ee_2)^{-1}\rceil}.$$
We have
\begin{align}
\begin{split}
    &\p\Big(\tau_x>\frac{x}{\hat{c}_1}\mid S\Big)\\
    &\leq \p\Big(T_x>\frac{1}{\hat{c}_1}\mid S\Big)+\sum_{t\in F}\p\Big(\tau_x>\frac{x}{\hat{c}_1}\mid T_x\in(t-\ee_2,t],S\Big)\p(T_x\in(t-\ee_2,t]\mid S)\\
    &\leq \p(|V_{\lfloor x/\hat{c}_1\rfloor}|< x^3\mid S)+\sum_{t\in F}\p\Big(\tau_x>\frac{x}{\hat{c}_1}\mid T_x\in(t-\ee_2,t],S\Big)\p(|V_{\lfloor(t-\ee_2)x\rfloor}|\leq x^3\mid S).
\end{split}\label{eq:ppp}
\end{align}
By (5.11) of \citep{gantert2018large},
\begin{align}
    \p(|V_{\lfloor tx\rfloor}|<x^2\mid T_x\in(t-\ee_2,t])\leq \exp\Big(\frac{x^3}{2}\log q\Big).\label{eq:tx<x^2}
\end{align}

For $\ee_3>0$, define 
% $$U(\rho,\ee_3)=I^{-1}((\log\rho-\ee_3,\infty))\subseteq \R^d$$
% and define 
the set
\begin{align}
    R_t=R_t(\rho,\ee_3)=\be-\Big(\frac{1}{\hat{c}_1}-t\Big)I^{-1}((\log\rho-\ee_3,\infty))\subseteq\R^d.\label{eq:Rt}
\end{align}
By \eqref{eq:tx<x^2}, we have
\begin{align}
    \begin{split}
        &\p\Big(\tau_x>\frac{x}{\hat{c}_1}\mid T_x\in(t-\ee_2,t],S\Big)\\
    &\leq \p(\exists v\in V_{\lfloor tx\rfloor}:\bet_{v}\in xR_t\mid T_x\in(t-\ee_2,t],S)\\
    &\hspace{1cm}+\p\Big(\forall v\in V_{\lfloor tx\rfloor},\,\bet_{v}\not\in xR_t; \,\tau_x>\frac{x}{\hat{c}_1};\,|V_{\lfloor tx\rfloor}|\geq x^2\mid T_x\in(t-\ee_2,t],S\Big)\\
    &\hspace{1cm}+\p(|V_{\lfloor tx\rfloor}|<x^2\mid T_x\in(t-\ee_2,t],S)\\
    &\leq e^{2(\log\rho)\ee_2x}\p(\bS_{\lfloor tx\rfloor}\in xR_t)+\sup_{{\by}_x\not\in R_t}\p\Big(\tau_{\bx-{\by_x}x}>\frac{x}{\hat{c}_1}-tx\Big)^{x^2}+p^{-1}\exp\Big(\frac{x^3}{2}\log q\Big).
    \end{split}\label{eq:p2}
\end{align}
For ${\by_x}\not\in R_t$, by definition,
$$%I\Big(\frac{\bx-x\by_x}{(1/\hat{c}_1-t)x}\Big)=
I\Big(\frac{\be-\by_x}{1/\hat{c}_1-t}\Big)\leq\log\rho-\ee_3,$$
so that by Lemma \ref{lemma:uni},
$$\sup_{{\by_x}\not\in R_t}\p\Big(\tau_{\bx-x\by_x}>\frac{x}{\hat{c}_1}-tx\Big)\to q.$$
In addition, by Cram\'{e}r's large deviation theorem,
$$\lim_{x\to\infty}\frac{1}{x}\log\p(\bS_{\lfloor tx\rfloor}\in xR_t)\leq -\inf_{\by_x\in \bar{R}_t}tI\big(\frac{\by_x}{t}\big),$$
where $\bar{R}_t$ is the closure of $R_t$.
Therefore, by \eqref{eq:ppp}, \eqref{eq:p2}, and Lemma \ref{lemma:GW LD}, we arrive at 
\begin{align*}
    \lim_{x\to\infty}\frac{1}{x}\log\p\Big(\tau_x>\frac{x}{\hat{c}_1}\mid S\Big)&\leq 2(\log\rho)\ee_2-\inf_{t\in F}\inf_{\by_x\in \bar{R}_t}\Big(tI\big(\frac{\by_x}{t}\big)+\gamma(t-\ee_2)\Big).
    %\sum_{t\in F}e^{O(\ee_2)x}\sup_{{\by}\in R_t}e^{-txI({\by}/(tx))}e^{-(\gamma+o(1)) t x}\\
    % &\ll \sup_{t\in F}\exp\Big(-x\inf_{{\by}\in R_t}(\gamma t+tI({\by}/(tx)))\Big)\\
    % &\ll \exp\Big(-x\inf_{t\in [0,c]}\inf_{{\by}\in R_t}(\gamma t+tI({\by}/(tx)))\Big)\\
    % &=\exp\Big(-x\inf_{0\leq t\leq c}\inf_{\bu:I(\bu)\geq\log\rho}\Big(\gamma t+tI\Big(\frac{\be-(c-t)\bu}{t}\Big)\Big)\Big).
\end{align*}
We next show that
\begin{align}
    \lim_{\ee_3\to 0}\inf_{\by_x\in \bar{R}_t}\Big(tI\big(\frac{\by_x}{t}\big)+\gamma(t-\ee_2)\Big)= \inf_{\by_x:I((\be-\by_x)/(1/\hat{c}_1-t))\geq\log\rho}\Big(tI\big(\frac{\by_x}{t}\big)+\gamma (t-\ee_2)\Big).\label{eq:p3}
\end{align}
Indeed, the `$\leq$' direction is obvious; to see the `$\geq$' direction, let $\{\by_n\}$ be an arbitrary sequence such that $I((\be-\by_n)/(1/\hat{c}_1-t))\in(\log\rho-1/n,\log\rho]$ for each $n$. Since the rate function $I$ is good (Lemma 2.2.30 of \citep{dembo2009large} and the remark that follows), the level sets of $I$ are compact, so there is a subsequence $\by_{n_k}\to \by$. Since $I$ is continuous on its domain (which contains the level set), we must have $I((\be-\by)/(1/\hat{c}_1-t))=\log\rho$. This proves \eqref{eq:p3}. 
Therefore, using \eqref{eq:p3} and letting $\ee_3\to 0$ and then $\ee_2\to 0$, we have 
$$\lim_{x\to\infty}\frac{1}{x}\log\p\Big(\tau_x>\frac{x}{\hat{c}_1}\mid S\Big)\leq -\inf_{0\leq t\leq 1/\hat{c}_1}\inf_{\by_x:I((\be-\by_x)/(1/\hat{c}_1-t))\geq\log\rho}\Big(tI\big(\frac{\by_x}{t}\big)+\gamma t\Big),$$
as desired.
 \end{proof}

\section{An asymptotically logarithmically optimal algorithm for estimating the lower deviation probabilities}\label{sec:alg}

\subsection{Problem formulation}

Our goal in this section is to provide an efficient algorithm that estimates $\p(\tau_x\leq x/\hat{c}_1)$ where $\hat{c}_1>c_1$. We first argue that this reduces to finding one for $\p(\tau_x=\lfloor x/\hat{c}_1\rfloor)$. 
By Proposition \ref{prop:LB} and the proof of Theorem \ref{thm:Ld1}, for any $\hat{c}_1>c_1$ and $\ee>0$, there exists $K>0$ such that
$$\p\Big(\tau_x\leq  \frac{x}{\hat{c}_1}-K\Big)\leq \ee\p\Big(\tau_x=\lfloor\frac{x}{\hat{c}_1}\rfloor\Big)\leq \ee\p\Big(\tau_x\leq \frac{x}{\hat{c}_1}\Big).$$
In other words, to estimate $\p(\tau_x\leq x/\hat{c}_1)$ subject to an $\ee$-bias, it suffices to estimate $\p(x/\hat{c}_1-K<\tau_x\leq x/\hat{c}_1)$ for some large integer $K$. By writing 
$$\p\Big(\frac{x}{\hat{c}_1}-K< \tau_x\leq \frac{x}{\hat{c}_1}\Big)=\sum_{t=0}^{K-1}\p\Big(\tau_x=\lfloor\frac{x}{\hat{c}_1}\rfloor-t\Big),$$
it follows that estimating the left-hand side is equivalent to applying the estimation algorithm $K$ times for the probabilities of the form $\p(\tau_x= \lfloor x/\hat{c}_1\rfloor-t),~0\leq t\leq K-1$, where $K$ is a constant that does not increase with $x$. In other words, developing an algorithm for computing the cdf $\p(\tau_x\leq x/\hat{c}_1)$ with polynomial complexity is equivalent to developing one for $\p(\tau_x= \lfloor x/\hat{c}_1\rfloor-t)$ with polynomial complexity.

We impose slightly different assumptions from Section \ref{sec:LD}. In particular, for simplicity, we assume that $\bxi$ has a spherical symmetric law; i.e., $\bxi$ is invariant under any orthonormal transformation. The general non-symmetric case can be derived by a similar analysis. We summarize the assumptions as follows. Recall that $\phi_\bxi$ is the MGF of $\bxi$. %Recall \eqref{eq:I long}.
 
\begin{itemize}
     \item [(A1')] the offspring distribution $\zeta $ has a finite $(1+\delta)$-th moment, i.e., $\sum_i i^{1+\delta}\,p_i<\infty$; 
     \item [(A2')] the law of $\bxi$ is non-degenerate, integrable, and spherically symmetric;
         \item [(A3')] $\hat{c}_1>c_1$ satisfies $(\hat{c}_1,\z)\in (\mathrm{ran}\nabla\log\phi_\bxi)^\circ$ and $\phi_\bxi$ is well-defined in an open neighborhood of $\hat{\bc}_2=\nabla I(\hat{c}_1,\z)$.
%\item [(A4)] the law of $\bxi$ is non-lattice in the sense that for all $\bx\in\R^{d}\setminus\{\z\}$, $|\E[e^{\mathrm{i}\bx\cdot\bxi}]|< 1$.
 \end{itemize}

Note that (A1') is weaker than (A1) and (A3') is weaker than (A3). 
The assumption (A2') implies the non-lattice condition (A4) by Lemma 24 of \citep{blanchet2024first}. 
Under assumptions (A2') and (A3'), we may abuse the notation and in this section denote by $I$ the large deviation rate function of $\xi$, the first coordinate of $\bxi$. We also write $\hat{c}_2=I'(\hat{c}_1)$, so that $\hat{\bc}_2=(\hat{c}_2,\z)$. In particular, (A3') implies that $\phi_\xi$ is well-defined in a neighborhood of $\hat{c}_2$, and $S_k$ defined in \eqref{eq:Sk} is the first coordinate of $\bS_k$. Again using spherical symmetry and (A3'), $\phi_\bxi$ is also well-defined in a neighborhood of $\z$.

The results in Sections \ref{sec:spineal} and \ref{sec:lowerd} were proved under assumptions (A1)--(A4), but it is direct to see that they also hold under (A1')--(A3').

\subsection{Description of the algorithm and main results}\label{sec:3.2}

% For simplicity, we consider the spherically symmetric case, applied with $\bth=(\hat{c}_2,\z)$ above and $I'(\hat{c}_1)=\hat{c}_2$ where $I$ is the large deviation rate function for the projected random walk (with jump $\xi$), and combining

Recall our notation for the spine decomposition from Section \ref{sec:spineal}. Let $n=\lfloor x/\hat{c}_1\rfloor-t$ be the desired value of the FPT, where $t=O(1)$. Our algorithm requires the introduction of a few parameters and events. Consider large positive constants $R_1,R_2,R_3,R_4,R_5$ to be determined. Eventually, we need $R_1\succ  R_2\succ  R_5\succ  R_4>d/(2\hat{c}_2)$ and $R_3\geq R_2$, where ``$\succ $'' here stands for ``much larger than''. 
Recall \eqref{eq:Dnk}, \eqref{eq:bar D}, and the definition of $\bar{c}_1$ above \eqref{eq:ccc}. Define the events
\begin{align}
    \begin{split}
        E_7&=\{\bS_n\in B_{R_1\log x}(\bx)\};\\
    E_8&=\{\exists k\in[n-R_2\log x,n],D_{n,k}(B_x)>0;\forall j\in[n-R_2\log x,n-1],\bar{D}_{n-1,j}(B_x)=0\};\\
    E_9&=\{S_n\leq x+R_4\log x\};\\
    E_{10}&=\{\forall k\in[n-\lfloor R_5\log x\rfloor],S_k<x+R_4\log x-\bar{c}_1(n-k)\};\\
    E_{11}&=\Big\{\forall k\in[n-\lfloor R_5\log x\rfloor],\sum_i e^{\hat{c}_2b_k(i)}<e^{\ee_1\hat{c}_2(n-k)}\Big\};\\
    E&=\bigcap_{j=7}^{11}E_j.
    \end{split}\label{eq:events}
\end{align}
 The events $E_7,\dots,E_{10}$ are introduced so that we can check that $\tau_x=n$ in polynomial (in $x$) computation time with a small relative bias under $\q$. The event $E_{11}$ allows us to compute the likelihood ratio with a small error in polynomial time in $x$.

\begin{figure}[!htbp]
    \centering
\includegraphics[width=0.97\linewidth,trim={2.5cm 1.8cm 2.2cm 1.5cm},clip]{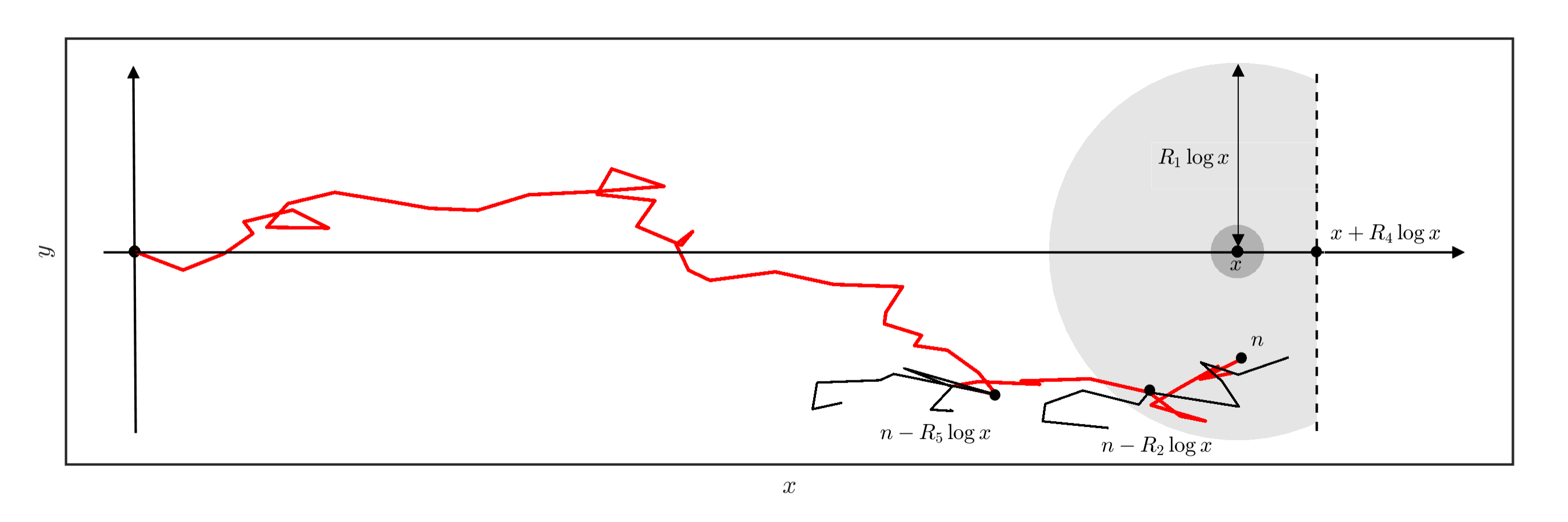}
    \caption{Schematic of the algorithm selecting a sample of a spine BRW, with $d=2$. The area shaded by light grey is the constrained location $\bS_n$ of the endpoint of the spine (event $E_7\cap E_9$). Further constraints are imposed on the spine trajectory (colored in red) and its adjacent jumps for steps in $[n-\lfloor R_5\log x\rfloor,n]$ (event $E_{10}\cap E_{11}$). If a sample verifies these constraints, the algorithm proceeds with simulating the off-spine BRWs and verifying the first passage event $\{\tau_x=n\}$ (event $E_8$).}
    \label{fig:schematicspine}
\end{figure}

\begin{remark}
    When checking if the event $E$ holds for the simulation under the law $\q$, one first simulates the spine $\{\bS_k\}$ and the bones $\{\bb_k(i)\}$ (which takes a linear amount of time), followed by the off-spine BRWs. The events $E_7,E_9,E_{10},E_{11}$ are $\sigma(\{\bS_k\},\{\bb_k(i)\})$-measurable. So, in the simulation procedure (detailed below), one checks that the event $E_7\cap E_9\cap E_{10}\cap E_{11}$ holds, before proceeding with the simulation of the off-spine BRWs. Intuitively, the events $E_7,E_8,E_9$ are less likely to hold than $E_{10}\cap E_{11}$ under law $\q$.  See also Figure \ref{fig:schematicspine} for an illustration.
\end{remark}

% \begin{remark}
%  Strictly speaking, the event $E_7$ is not required for the polynomial-time complexity of the algorithm, but it speeds up the algorithm by a non-trivial factor through first sieving the spines that get reasonably close to the target $B_x$.
% \end{remark}

Let $R_3\geq R_2$ and define the set
\begin{align}
    W_n=\bigcup_{k\geq n-R_3\log x}V_{n,k}\subseteq V_n.\label{eq:Wn}
\end{align}
In other words, $W_n$ are particles at time $n$ that emanated from the spine during the last $R_3\log x$ steps.
Finally, let 
\begin{align}
    Z=Z(x):=\bigg(\sum_{v\in W_n}e^{\hat{c}_2 \eta_v-n(\hat{c}_1\hat{c}_2+\log\rho-I(\hat{c}_1))}\bigg)^{-1}\bone_E.\label{eq:Zdef}
\end{align}
The quantity inside the bracket serves as an approximation to $\d\q/\d\p$ (c.f.~\eqref{eq:dq/dp}). 

In what follows, we provide the pseudocode for estimating the lower deviation probability $\p(\tau_x= \lfloor x/\hat{c}_1\rfloor-t)$ where $t\in\N$ is fixed and $x$ is large enough. We call this algorithm the \textit{Trimmed-Spine-BRW algorithm}. As usual, we denote by $n=\lfloor x/\hat{c}_1\rfloor-t$.

\begin{tcolorbox}[title=Pseudocode for the Trimmed-Spine-BRW algorithm, colback=gray!5, colframe=black]

\begin{enumerate}
    \item Generate a sample of the spine $\{\bS_k\}$ and the bones $\{\bb_k(i)\}$, whose laws are specified in Section \ref{sec:spineal}. Verify that the event $E_7\cap E_9\cap E_{10}\cap E_{11}$ holds. If not, return a value zero. If so, proceed to the next step.
    \item Construct the spine BRW by generating next the off-spine BRWs that do not leave the spine before time $k=n-R_2\log x$. Verify that the event $E_8$ holds. If not, return a value zero. If so, proceed to the next step.
    \item Compute and return the value 
    $$ Z=\bigg(\sum_{v\in W_n}e^{\hat{c}_2 \eta_v-n(\hat{c}_1\hat{c}_2+\log\rho-I(\hat{c}_1))}\bigg)^{-1}\bone_E.$$
    \item Iterate the above steps independently $N$ times for a large $N$ and average the returned values to return the final estimate for $\p(\tau_x= \lfloor x/\hat{c}_1\rfloor-t)$.
\end{enumerate}
\end{tcolorbox}

\begin{remark}\label{rem:omega}
    The algorithm has several parameters, $R_j$ for $1\leq j\leq 5$. A recipe that is asymptotically logarithmically optimal and intuitive is to let $R_4=d/(2\hat{c}_2)$ and define the remaining parameters in terms of some $\omega>1$ as follows: $R_5 =\omega R_4$, $R_2=R_3=\omega^2R_4$, and $R_1=\omega^3R_4$. We implement the algorithm with these choices and compare their performance with varying $\omega$ in Section \ref{sec:numerical}.
\end{remark}

The following result provides a guarantee of the algorithm.
\begin{theorem}\label{prop:alg}
     Fix $\hat{c}_1>c_1$ and $t\in\N$. There exist $r>0$ such that the random variable $Z=Z(x)$ defined through \eqref{eq:Zdef} satisfies
     \begin{enumerate}[(i)]
         \item $0\leq Z\leq x^r\p(\tau_x= \lfloor x/\hat{c}_1\rfloor-t)$;\label{t1}
         \item $\E[Z]=(1+o(1))\p(\tau_x= \lfloor x/\hat{c}_1\rfloor-t)$ as $x\to\infty$;\label{t2}
         \item $\E[Z^2]\ll x^r\p(\tau_x= \lfloor x/\hat{c}_1\rfloor-t)^2$ as $x\to\infty$,\label{t3}
     \end{enumerate}
     and simulating $Z$ has a computational complexity upper bounded by $O(x^r)$.
\end{theorem}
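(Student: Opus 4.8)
The plan is to prove the four claims in the order (i), (ii), (iii), complexity, reusing the infrastructure from Section~\ref{sec:lowerd}. The central object is the comparison between the ``true'' Radon--Nikodym derivative $\d\q/\d\p = \sum_{v\in V_n}e^{\hat{c}_2\eta_v - n\psi(\hat{\bc}_2)}$ and its truncation $\sum_{v\in W_n}e^{\hat{c}_2\eta_v - n\psi(\hat{\bc}_2)}$ appearing in $Z$; note $n\psi(\hat{\bc}_2) = n(\hat{c}_1\hat{c}_2+\log\rho-I(\hat{c}_1))$ by \eqref{eq:psibth}. I would first record the basic deterministic facts: on the event $E$ one has $\tau_x=n$ (the constraint $\bar D_{n-1,j}(B_x)=0$ for $j\in[n-R_2\log x,n-1]$ together with $E_8$'s first clause rules out any earlier visit from branches leaving the spine late, while $E_{10}$ and $E_{11}$, via the conditioned exponential-moment computation already used in the proof of Proposition~\ref{prop:LB}, rule out earlier visits from branches leaving the spine early -- this is the analogue of the event $E_6$ there). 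Hence $Z\le \bigl(\sum_{v\in W_n}e^{\hat{c}_2\eta_v-n\psi(\hat{\bc}_2)}\bigr)^{-1}\bone_{\{\tau_x=n\}}$; dropping all but one particle $v\in W_n$ with $\eta_v\in B_x$ (which exists on $E\subseteq\{\tau_x=n\}$, after checking the witnessing particle indeed left the spine within the last $R_2\log x\le R_3\log x$ steps, which is what $E_8$ guarantees) bounds that first factor below by $e^{\hat{c}_2(x-1)-n\psi(\hat{\bc}_2)}\asymp e^{-n(I(\hat{c}_1)-\log\rho)}$, which by Proposition~\ref{prop:LB} (applied with this $t$) is $\ll x^{d/2}\p(\tau_x=\lfloor x/\hat{c}_1\rfloor-t)$. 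This gives (i) with $r=d/2$ up to constants; absorbing constants into $x^r$ for $x$ large.

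For (ii), write $\p(\tau_x=n) = \E^\q[(\d\p/\d\q)\bone_{\{\tau_x=n\}}]$ and $\E[Z] = \E^\q[(\d\p/\d\q)\cdot(\text{true RN ratio})\cdot(\text{truncated RN ratio})^{-1}\bone_E]$. The difference $\p(\tau_x=n)-\E[Z]$ splits into (a) the contribution of $\{\tau_x=n\}\setminus E$ and (b) the discrepancy between the full and truncated RN derivatives on $E$. For (a) one must show $\p(\tau_x=n, E^{\complement})=o(\p(\tau_x=n))$; since $\p(\tau_x=n)\asymp x^{-d/2}e^{-n(I(\hat c_1)-\log\rho)}$ by Theorem~\ref{thm:Ld1}, it suffices to show each $\p(\tau_x=n,E_j^{\complement})$ is of smaller order, and here one uses the spine change of measure to pass to $\q$, bounding $\d\p/\d\q$ from above by $e^{-n\psi(\hat{\bc}_2)}(\text{something})$ on the relevant event and estimating $\q(E_j^\complement)$ -- for $E_7^\complement$ (spine endpoint far from $\bx$) and $E_9^\complement$ ($S_n$ too large) via the local-CLT/ballot-type estimates for the spine walk with mean $\hat{c}_1$ (Lemma~\ref{lemma:S_k} and Bahadur--Rao-type bounds), for $E_{10}^\complement,E_{11}^\complement$ via the optional-stopping and Lemma~\ref{lemma:b_k} arguments already deployed for $E_{3},E_{4}$, and for $E_8$'s second clause (an early visit despite the spine being well-behaved) via a first-moment bound on $\sum_{j}\bar D_{n-1,j}(B_x)$ over late $j$, again using the conditioned exponential-moment computation. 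The point is that all these are $e^{-c\log x}=x^{-c}$ smaller with $c$ made large by taking $R_1,\dots,R_5$ large. For (b), on $E$ the ``tail'' $\sum_{v\in V_n\setminus W_n}e^{\hat c_2\eta_v-n\psi}$ consists of particles leaving the spine before time $n-R_3\log x$; its conditional $\q$-expectation given $(\{\bS_k\},\{\bb_k(i)\})$ is $\ll e^{\hat c_2 x - \delta R_3\log x}$ by the same geometric-sum estimate as in Proposition~\ref{prop:LB}, so by Markov it is $\le \epsilon_x e^{\hat c_2 x-n\psi}$ with $\epsilon_x\to0$ off an event of $\q$-probability $o(x^{-d/2})$, and on the complement the truncated ratio is $(1+O(\epsilon_x))$ times the full ratio, whence $\E[Z]=(1+o(1))\p(\tau_x=n)$ after combining with (a).

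For (iii), $\E[Z^2]=\E^\q[(\d\p/\d\q)Z\cdot(\text{truncated ratio})^{-1}\bone_E]\le x^r\p(\tau_x=n)\cdot\E^\q[(\d\p/\d\q)(\text{truncated ratio})^{-1}\bone_E]$ using (i) to bound one factor of $Z$; the remaining expectation is $\le \E^\q[(\text{truncated ratio})^{-1}(\text{full ratio})^{-1}\bone_E]\cdot(\ldots)$ -- more cleanly, bound $Z\le x^r\p(\tau_x=n)$ and $Z\le(\text{truncated ratio})^{-1}\bone_{\{\tau_x=n\}}$, so $\E[Z^2]\le x^r\p(\tau_x=n)\,\E^\q[(\d\p/\d\q)\cdot(\text{truncated ratio})^{-1}\bone_E]$, and by (b) above $(\text{truncated ratio})^{-1}\le(1+o(1))(\d\q/\d\p)$ on $E$ minus a negligible set, giving $\E^\q[(\d\p/\d\q)(\d\q/\d\p)\bone_E](1+o(1))=\p(E)(1+o(1))\le\p(\tau_x=n)(1+o(1))$; hence $\E[Z^2]\ll x^r\p(\tau_x=n)^2$. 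Finally, the complexity bound: generating the spine and bones up to time $n=O(x)$ is $O(x)$ work (with $O(x)$ expected bones by the finite-moment assumption (A1')); conditional on $E_7\cap E_9\cap E_{10}\cap E_{11}$ one simulates only the off-spine BRWs that branch off within the last $R_2\log x$ steps, and on the event $E$ the total progeny of these is controlled -- each such BRW run for $\le R_2\log x$ generations with offspring mean $\rho$ has $\rho^{O(\log x)}=x^{O(1)}$ expected particles -- so the expected total work is $O(x^{r'})$ for some $r'$; taking $r$ to be the max of all exponents encountered completes the proof. The main obstacle is part (ii)(a): one must choose $R_1,\dots,R_5$ \emph{consistently} (respecting the ordering $R_1\succ R_2\succ R_5\succ R_4>d/(2\hat c_2)$, $R_3\ge R_2$) so that simultaneously (1) each $\q(E_j^\complement)$ decays faster than $x^{-d/2}$, (2) the truncation discrepancy in (b) is $o(1)$, and (3) the complexity exponent stays finite -- these pull $R_2,R_3$ up but $R_4$ only needs to exceed $d/(2\hat c_2)$, and verifying that a single choice (e.g.\ the $\omega$-recipe of Remark~\ref{rem:omega}) threads all constraints is where the careful bookkeeping lies, especially handling the last $O(\log x)$ spine steps where, as noted in Section~\ref{Sub:spine_MLP}, the contributions to the tail event are concentrated.
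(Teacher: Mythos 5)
Your overall architecture is the same as the paper's (lower-bound the truncated likelihood ratio on $E$ via the witness particle in $W_n$ guaranteed by $E_8$ and $R_3\ge R_2$, use Proposition \ref{prop:LB} to convert $e^{-n(I(\hat c_1)-\log\rho)}$ into $x^{d/2}\p(\tau_x=n)$, control the bias by a change of measure and event-by-event estimates, control the truncation error by the conditioned geometric-sum computation, and count the trimmed simulation cost), but there is a genuine gap: your claim that on $E$ one has $\tau_x=n$. The events $E_{10}$ and $E_{11}$ constrain only the spine $\{\bS_k\}$ and the bones $\{\bb_k(i)\}$; they place no pathwise constraint on the off-spine BRWs that branch off before time $n-R_2\log x$ (which the algorithm never simulates), and the ``conditioned exponential-moment computation'' only shows that an early visit of such a decoration to $B_x$ is unlikely under $\q$ — it does not rule it out. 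Hence $E\not\subseteq\{\tau_x=n\}$, and your bias decomposition in (ii) into (a) $\{\tau_x=n\}\setminus E$ plus (b) the truncation discrepancy omits the ``false positive'' term $\E^\q\bigl[\bigl(\sum_{v\in W_n}e^{\hat c_2\eta_v-n\psi(\hat{\bc}_2)}\bigr)^{-1}\bone_{E\setminus\{\tau_x=n\}}\bigr]$. Since the truncated ratio is $\asymp e^{n(I(\hat c_1)-\log\rho)}$ on $E$, this term is harmless only if one proves that the corresponding $\p$-mass is $o(x^{-d/2}e^{-n(I(\hat c_1)-\log\rho)})$; this is exactly what the paper's Proposition \ref{prop:final}(v) supplies, via the inclusion $\{\tau_x<n\}\cap E_8\subseteq\{\exists k\in[n-R_2\log x],\ \bar D_{n,k}(B_x)>0\}$ furnished by the second clause of $E_8$ together with the bound $\p(\tau_x\le n;\exists k\in[n-R_2\log x],\bar D_{n,k}(B_x)>0)=o(x^{-d/2}e^{-n(I(\hat c_1)-\log\rho)})$. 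The estimate is of the same type you already invoke, but it must be stated and used — it is the reason the second clause of $E_8$ is in the event at all. Relatedly, your treatment of the $E_8$ term inside (a) is misattributed: on $\{\tau_x=n\}$ there are no early visits, so the second clause of $E_8$ cannot fail there; what can fail is the first clause (the passage particle left the spine before time $n-R_2\log x$), and it is again controlled by the same Proposition \ref{prop:final}(v)-type bound.

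The remaining parts are essentially the paper's proof. For (i) your bound is the paper's (the detour through $\bone_{\{\tau_x=n\}}$ is unnecessary: $E_8$ alone gives a particle of $W_n$ in $B_x$, hence $Z\ll e^{-n(I(\hat c_1)-\log\rho)}\ll x^{d/2}\p(\tau_x=\lfloor x/\hat c_1\rfloor-t)$). For (iii) your argument again leans on the false inclusion (``$\p(E)\le\p(\tau_x=n)$''); but (iii) is immediate from (i) by squaring, which is what the paper does, so this is trivially repaired. Your handling of (b) by conditional Markov off an exceptional set of $\q$-probability $o(x^{-d/2})$ is a legitimate variant of the paper's direct expectation bound (Proposition \ref{prop:final}(vi)), and your complexity count matches the paper's. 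So the proposal is fixable with tools you already cite, but as written part (ii) is incomplete at precisely the delicate point of the paper's argument.
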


% In this section, we provide results that are key to developing an optimal algorithm, whose pseudocode will be provided at the end of Section \ref{sec:alg}. 

\begin{remark}
    As will be apparent from the proof, the constant $r$ here may depend on the law of the BRW, and, in particular, depend on the underlying dimension $d$. Although we have not done so, $r$ can be made explicit.
\end{remark}

% \begin{remark}
% We have assumed for simplicity that $t$ is a constant that does not depend on $x$. Nonetheless, the same proof works for $t=O(\sqrt{x})$, and the same conclusions hold in this case.
%%%%%%%%%%%%%%%%%%%%%%%%CONSTANT BOUND FAILS FOR t>>log(x).
% \end{remark}

\begin{corollary}\label{thm:alg}
     Fix $\hat{c}_1>c_1$ and $t\in\N$. Let $Z$ be as defined in \eqref{eq:Zdef} and $\{Z_n\}_{n\in \N}$ be i.i.d.~copies of $Z$. Define also $\bar{Z}_N=(Z_1+\dots+Z_N)/N$.  For any $\ee>0$, there exists $r>0$ such that for all $x$ large enough,
\begin{align}
    \p\bigg(\Big|\p\Big(\tau_x= \lfloor \frac{x}{\hat{c}_1}\rfloor -t\Big)-\bar{Z}_N\Big|\geq \ee\p\Big(\tau_x= \lfloor \frac{x}{\hat{c}_1}\rfloor -t\Big)\bigg)\leq \exp\Big(-\frac{\ee^2N}{10x^r}\Big).\label{eq:toshow}
\end{align}
In particular, for any $\delta>0$, there exists $Z'$ such that
$$\p\bigg(\Big|\p\Big(\tau_x= \lfloor \frac{x}{\hat{c}_1}\rfloor -t\Big)-Z'\Big|\geq \ee\p\Big(\tau_x= \lfloor \frac{x}{\hat{c}_1}\rfloor -t\Big)\bigg)\leq\delta$$
 and simulating $Z'$ has a computational complexity upper bounded by $O(x^r(-\log\delta)/\ee^2)$.
\end{corollary}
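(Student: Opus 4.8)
The plan is to derive Corollary~\ref{thm:alg} directly from Theorem~\ref{prop:alg} via a concentration argument, handling the two assertions separately. For the first assertion \eqref{eq:toshow}, I would start from the i.i.d.\ copies $Z_1,\dots,Z_N$ of $Z$ and write $\bar Z_N - \E[Z] = \frac1N\sum_{i=1}^N (Z_i - \E[Z])$. By Theorem~\ref{prop:alg}\eqref{t1}, each $Z_i$ is bounded: $0\le Z_i\le x^r\,p_x$ where I abbreviate $p_x := \p(\tau_x=\lfloor x/\hat c_1\rfloor-t)$. By Theorem~\ref{prop:alg}\eqref{t2}, $\E[Z] = (1+o(1))p_x$, so for $x$ large enough $|\E[Z]-p_x|\le \tfrac{\ee}{2}p_x$; it therefore suffices to bound $\p(|\bar Z_N-\E[Z]|\ge \tfrac{\ee}{2}p_x)$ and absorb the bias into a slightly larger polynomial factor. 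Since $Z_i\in[0,x^r p_x]$, Hoeffding's inequality gives
\[
\p\Big(|\bar Z_N-\E[Z]|\ge \tfrac{\ee}{2}p_x\Big)\le 2\exp\Big(-\frac{2N(\ee p_x/2)^2}{(x^r p_x)^2}\Big)=2\exp\Big(-\frac{\ee^2 N}{2x^{2r}}\Big),
\]
and after renaming $2r$ as $r$ and noting $2e^{-u}\le e^{-u/5}\cdot e^{\cdots}$ is not quite right, one instead absorbs the factor $2$ and the constant by enlarging $r$ and using $N$ large, yielding the stated form $\exp(-\ee^2 N/(10x^r))$. The only subtlety is bookkeeping of constants — one should be slightly generous (the constant $10$ in the denominator and renaming $r$) so that the $2$, the factor from the bias absorption, and the $1/2$ all fit; alternatively one can invoke Theorem~\ref{prop:alg}\eqref{t3} and Bernstein's inequality, but Hoeffding with the a.s.\ bound \eqref{t1} is cleanest and that is why \eqref{t1} was stated.

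For the second assertion, given $\delta>0$ I would simply set $Z' := \bar Z_N$ with $N = \lceil 10x^r(-\log\delta)/\ee^2\rceil$, so that \eqref{eq:toshow} gives the bound $\exp(-\ee^2N/(10x^r))\le \exp(\log\delta)=\delta$, as required. Each of the $N$ replicates of $Z$ costs $O(x^{r'})$ to simulate by the last clause of Theorem~\ref{prop:alg} (possibly a different polynomial exponent $r'$, which one merges into a common $r$), so the total simulation cost of $Z'$ is $N\cdot O(x^{r'}) = O(x^r(-\log\delta)/\ee^2)$ after again renaming the exponent. I would remark explicitly that all occurrences of "$r$" in the statement may be taken to be the same constant by passing to the maximum of the finitely many polynomial exponents appearing (from \eqref{t1}, \eqref{t3}, the computational-complexity bound, and the squaring in Hoeffding).

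I do not expect a genuine obstacle here — Theorem~\ref{prop:alg} does all the heavy lifting, and the corollary is a routine concentration-of-measure wrapper. The only thing requiring a little care is the interplay between the multiplicative bias $o(1)$ from \eqref{t2} and the desired relative error $\ee$: one must fix $x$ large enough that the bias is below $\ee/2$ before applying the concentration bound, and make sure the statement "for all $x$ large enough" in the corollary is consistent with this. A secondary (purely cosmetic) point is reconciling the various polynomial exponents into the single symbol $r$; I would state once at the start of the proof that $r$ is chosen as the maximum of the relevant exponents and that the constant $10$ is a safe slack term. If one wanted the sharper CLT-based $O(\sqrt{x}/\ee^2)$ heuristic mentioned in Section~\ref{Sub:spine_MLP}, that would require \eqref{t3} with an explicit small exponent and a Berry--Esseen argument, but the corollary as stated only claims polynomial complexity, so Hoeffding suffices.
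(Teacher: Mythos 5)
Your overall structure is exactly the paper's: absorb the $o(1)$ bias from Theorem \ref{prop:alg}(\ref{t2}) by requiring $x$ large enough that $|\E[Z]-p_x|\le \tfrac{\ee}{2}p_x$, reduce to a concentration bound for $\bar Z_N-\E[\bar Z_N]$ at level $\tfrac{\ee}{2}p_x$, and then obtain the second assertion by taking $N\asymp x^r(-\log\delta)/\ee^2$ and multiplying by the $O(x^r)$ per-replicate cost from Theorem \ref{prop:alg}. The only real divergence is the choice of concentration inequality: you use Hoeffding with the almost-sure bound (\ref{t1}) alone, which yields exponent $\ee^2N/(2x^{2r})$, whereas the paper uses Bernstein's inequality, combining the variance bound (\ref{t3}) with (\ref{t1}), which yields $\ee^2N/(10x^r)$ directly. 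Since the corollary only asserts existence of some $r>0$, the doubling of the polynomial exponent is harmless.

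There is, however, one concrete gap in your constant bookkeeping. The two-sided Hoeffding bound carries a prefactor $2$, and your proposed fix (``absorb the factor $2$ ... by enlarging $r$ and using $N$ large'') does not work for the first assertion: \eqref{eq:toshow} is claimed for every $N$, and when $N\ll x^{r}$ the exponent $\ee^2N/(2x^{2r})$ is below $\ln 2$, so $2\exp(-\ee^2N/(2x^{2r}))>1$ while the target bound $\exp(-\ee^2N/(10x^{r'}))$ is strictly below $1$; enlarging $r'$ only weakens your exponent further, so no renaming of exponents absorbs the multiplicative $2$ in that regime. The clean repair is precisely your own fallback, which is what the paper does: apply Bernstein with the second-moment bound (\ref{t3}) (this is why (\ref{t3}) is stated), giving $\exp\bigl(-\tfrac{(\ee N p_x/2)^2}{2(Nx^rp_x^2+\tfrac{\ee}{6}Nx^rp_x^2)}\bigr)\le\exp(-\ee^2N/(10x^r))$ uniformly in $N$ (for $\ee$ not too large), with no prefactor to absorb. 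Alternatively, your Hoeffding argument suffices verbatim for the ``in particular'' claim, where $N$ is chosen of order $x^{r}(-\log\delta)/\ee^2$ and the factor $2$ is then genuinely absorbable; but as a proof of \eqref{eq:toshow} as stated it falls short for small $N$.
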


\begin{proof}
%     For fixed $\ee,\delta>0$, we let $N=N(\ee,\delta)=\lceil (-10x^r\log\delta)/\ee^2\rceil$.  It follows that simulating $Z'$ has a computational complexity upper bounded by $O(x^{2r}(-11\log\delta)/\ee^2)$. 
     For $x$ large enough, by Theorem \ref{prop:alg}\eqref{t2}, $$|\E[Z]-\p(\tau_x= \lfloor x/\hat{c}_1\rfloor-t)|<\frac{\ee}{2}\p(\tau_x= \lfloor x/\hat{c}_1\rfloor-t).$$ Therefore, by Bernstein's inequality,
     \begin{align*}
         &\p\bigg(\Big|\p\Big(\tau_x= \lfloor \frac{x}{\hat{c}_1}\rfloor -t\Big)-\bar{Z}_N\Big|\geq\ee\p\Big(\tau_x= \lfloor \frac{x}{\hat{c}_1}\rfloor -t\Big)\bigg)
         \\&\leq \p\Big(|\bar{Z}_N-\E[\bar{Z}_N]|\geq\frac{\ee}{2}\p\Big(\tau_x=\lfloor\frac{x}{\hat{c}_1}\rfloor-t\Big)\Big)\\
         &\leq \exp\Big(-\frac{\big(\frac{\ee N}{2}\p(\tau_x= \lfloor x/\hat{c}_1\rfloor-t)\big)^2}{2(Nx^r\p(\tau_x= \lfloor x/\hat{c}_1\rfloor-t)^2+\frac{\ee}{6}Nx^r\p(\tau_x= \lfloor x/\hat{c}_1\rfloor-t)^2)}\Big)\\
         &\leq \exp\Big(-\frac{\ee^2N}{10x^r}\Big)
     \end{align*}
     for $x$ large enough. With the choice $N=\lceil (-10x^r\log\delta)/\ee^2\rceil$, we have that $Z'=\bar{Z}_N$ satisfies
     $$\p\bigg(\Big|\p\Big(\tau_x= \lfloor \frac{x}{\hat{c}_1}\rfloor -t\Big)-Z'\Big|\geq\ee\p\Big(\tau_x= \lfloor \frac{x}{\hat{c}_1}\rfloor -t\Big)\bigg)\leq \exp\Big(-\frac{\ee^2N}{10x^r}\Big)\leq \delta,$$
     as desired.
\end{proof}

Let us comment on the asymptotic optimality properties of our algorithm. Let $\hat{c}_1$ and $t$ be fixed. For each fixed $x$, the estimator $Z(x)$ in \eqref{eq:Zdef} is biased. However, Theorem \ref{prop:alg}\eqref{t2} indicates that the collection of estimators $\{Z(x)\}_{x\geq 0}$ is asymptotically unbiased as $x\to\infty$. Following \citep[Chapter VI]{asmussen2007stochastic} and \citep{blanchet2012state}, we say that $\{Z(x)\}_{x\geq 0}$ has \textit{bounded relative error} (or \textit{strongly efficient}) if 
$$\limsup_{x\to\infty}\frac{\var(Z(x))}{\E[Z(x)]^2}<\infty,$$
and is \textit{logarithmically efficient} (or \textit{asymptotically optimal}) if
\begin{align}
    \liminf_{x\to\infty}\frac{|\log\var(Z(x))|}{|\log\E[Z(x)]^2|}\geq 1.\label{eq:log-efficient}
\end{align}
It follows from Theorem \ref{prop:alg} and Proposition \ref{prop:LB} that $\{Z(x)\}_{x\geq 0}$ is logarithmically efficient, but does not have bounded relative error. In fact, $\{Z(x)\}_{x\geq 0}$ satisfies the following strengthened version of logarithmic efficiency (also known as having \textit{polynomial complexity of a finite order}):
$$\limsup_{x\to\infty}\frac{\var(Z(x))}{\E[Z(x)]^2|\log\E[Z(x)]^2|^r}<\infty$$
for some $r>0$.

Since the complexity of a BRW grows exponentially in time, it is unavoidable that bias will be introduced for the polynomial-complexity estimator. Polynomial complexity is achieved by trimming the simulation of the decorations (off-spine BRWs) in the spine decomposition until the last $L\log x$ steps, for some large constant $L$ to be determined. 

As logarithmic efficiency \eqref{eq:log-efficient} is a desirable property for estimators, it is essential that while trimming the decorations, the first moment $\E[Z(x)]$ remains of the same magnitude as $\p(\tau_x=n)$, which we have shown is of order $x^{-d/2}e^{-\frac{x}{\hat{c}_1}(I(\hat{\bc}_1)-\log\rho)}$ from Section \ref{sec:LD}. The careful construction of the events \eqref{eq:events} and the definition \eqref{eq:Zdef} achieves this goal.

%To achieve this goal, we need to carefully construct our estimator $Z(x)$ using elements from the spine decomposition. This motivates the introduction of the events in \eqref{eq:events} below.

\subsection{Proof of Theorem \ref{prop:alg}}\label{sec:proof}

Recall \eqref{eq:Zdef} and the events $E_7$ through $E_{11}$ defined in \eqref{eq:events}. 
We first verify some simple properties of $Z$. First, since $R_3\geq R_2$, on the event $E$ there exists $v\in W_n$ with $\bet_v\in B_x$ and hence $\eta_v\in[x-1,x+1]$. Therefore, on the event $E$, we have the trivial lower bound
\begin{align}
    \frac{\d\q}{\d\p}\geq \sum_{v\in W_n}e^{\hat{c}_2 \eta_v-n(\hat{c}_1\hat{c}_2+\log\rho-I(\hat{c}_1))}\gg e^{\hat{c}_2 x-n(\hat{c}_1\hat{c}_2+\log\rho-I(\hat{c}_1))}\asymp e^{n(I(\hat{c}_1)-\log\rho)}.\label{eq:LB trivial}
\end{align}
Therefore, $0\leq Z\ll e^{-n(I(\hat{c}_1)-\log\rho)}$ a.s. The same bound \eqref{eq:LB trivial} also holds on the event $\{\tau_x\leq n\}$. This along with Proposition \ref{prop:LB} gives Theorem \ref{prop:alg}\eqref{t1}. Immediately, Theorem \ref{prop:alg}\eqref{t3} also follows. Moreover, it is not hard to see that simulating $Z$ takes polynomial time in $x$, since it only concerns simulating a polynomial number of particles, each evolving for at most linear amount of time. We are therefore left with proving Theorem \ref{prop:alg}\eqref{t2}, that 
\begin{align}
    \E^\q[Z]=(1+o(1))\p\Big(\tau_x=\lfloor\frac{x}{\hat{c}_1}\rfloor-t\Big).\label{eq:ii}
\end{align}
The key part of the proof of \eqref{eq:ii} attributes to the next proposition.

\begin{proposition}\label{prop:final}
    There exist choices of positive numbers $R_1,R_2,R_3,R_4,R_5$ (recall from \eqref{eq:Wn} that the choice of $W_n$ depends on $R_3$) such that:
    \begin{enumerate}[(i)]
     
    \item $\p(E_9^c)=\p(S_n\geq x+R_4\log x)=o(x^{-d/2}e^{-n(I(\hat{c}_1)-\log\rho)})$;\label{PE7}
    \item $\q(E_9\cap E_{10}^c)=\q(S_n\leq x+R_4\log x;\exists k\in[n-\lfloor R_5\log x\rfloor],S_k\geq x+R_4\log x-\bar{c}_1(n-k))=o(x^{-d/2})$;\label{S1}
    \item $\q(E_9\cap E_{11}^c)=\q(S_n\leq x+R_4\log x;\exists k\in[n-\lfloor R_5\log x\rfloor],\sum_i e^{\hat{c}_2b_{k+1}(i)}\geq e^{\ee_1\hat{c}_2(n-k)})=o(x^{-d/2})$;\label{S2}
        \item $\q(\{\tau_x=n\}\cap E_7^c\cap E_9\cap E_{10}\cap E_{11})=o(x^{-d/2})$;\label{QE7}%; \forall 1\leq j\leq n-R_2\log x, \bar{D}_{n,j}(B_x)=0)=o(x^{-d/2})$;
       \item $\p(\tau_x\leq n;\exists k\in [n-R_2\log x], \bar{D}_{n,k}(B_x)>0)=o(x^{-d/2}e^{-n(I(\hat{c}_1)-\log\rho)})$;\label{PE8}
        \item it holds
        $$\E^\q\bigg[\sum_{v\in V_n\setminus W_n}e^{\hat{c}_2 \eta_v-n(\hat{c}_1\hat{c}_2+\log\rho-I(\hat{c}_1))}\bone_E\bigg]=o(x^{-d/2}e^{n(I(\hat{c}_1)-\log\rho)}).$$\label{QE}
    \end{enumerate}
\end{proposition}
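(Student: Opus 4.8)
The plan is to prove the six estimates in the order (i)--(iii), then (vi), then (v), and finally (iv), fixing the parameters in a cascade as we go. The single structural fact underlying everything is the deterministic bound
\begin{align*}
  \frac{\d\p}{\d\q}=\frac{1}{W_n^{\hat{\bc}_2}}\ll e^{-n(I(\hat{c}_1)-\log\rho)}\qquad\text{on }\{\tau_x\leq n\},
\end{align*}
which holds because a particle sitting in $B_x$ contributes $e^{\hat{c}_2\eta_v-n\psi(\hat{\bc}_2)}\asymp e^{n(I(\hat{c}_1)-\log\rho)}$ to $W_n^{\hat{\bc}_2}$, using \eqref{eq:psibth} and $\hat{c}_2x=n\psi(\hat{\bc}_2)+n(I(\hat{c}_1)-\log\rho)+O(1)$; on $E_9^{\mathrm c}$ the spine particle alone sharpens this to $1/W_n^{\hat{\bc}_2}\ll x^{-\hat{c}_2R_4}e^{-n(I(\hat{c}_1)-\log\rho)}$. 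This lets us read the ``$\p$''-statements (i) and (v) via $\p(A)=\E^\q[(W_n^{\hat{\bc}_2})^{-1}\bone_A]$ (legitimate even for spine-measurable $A$, since $1/W_n^{\hat{\bc}_2}$ is $\sF_n$-measurable and a nonnegative mean-one $\q$-martingale). The remaining recurring ingredients are: the exponential-moment identity $\E[\sum_{v\in V_j}e^{\hat{\bc}_2\cdot\bet_v}]=e^{j\psi(\hat{\bc}_2)}$ applied to each off-spine BRW (the same computation already used in the proof of Proposition~\ref{prop:LB}); Cram\'er's theorem and optional-stopping bounds for the drift-$\hat{c}_1$ spine walk $\{S_k\}$ of Lemma~\ref{lemma:S_k}, exactly as for the events $E_{3,j},E_{4,j}$; Lemma~\ref{lemma:b_k} for the adjacent-jump sums; and the local CLT of Lemma~\ref{lemma:S_k}(iii). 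Along the way the parameters get pinned down in turn: (i) forces $R_4>d/(2\hat{c}_2)$, (ii) forces $R_5\succ R_4$, (v)--(vi) force $R_2=R_3\succ R_5$, and (iv) forces $R_1\succ R_2$.

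Items (i)--(iii) are the quickest. For (i), $\p(E_9^{\mathrm c})=\E^\q[(W_n^{\hat{\bc}_2})^{-1}\bone_{E_9^{\mathrm c}}]\ll x^{-\hat{c}_2R_4}e^{-n(I(\hat{c}_1)-\log\rho)}$, which is $o(x^{-d/2}e^{-n(I(\hat{c}_1)-\log\rho)})$ precisely because $\hat{c}_2R_4>d/2$. For (ii), on $E_9\cap E_{10}^{\mathrm c}$ there is $k\leq n-\lfloor R_5\log x\rfloor$ with $S_k\geq x+R_4\log x-\bar{c}_1(n-k)$ while $S_n\leq x+R_4\log x$, so the spine advances at average speed $\leq\bar{c}_1<\hat{c}_1$ over its last $n-k\geq R_5\log x$ steps; a union bound over $k$ and Cram\'er's upper bound give a total $\ll x^{-cR_5}=o(x^{-d/2})$ once $R_5$ is large. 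Item (iii) is Lemma~\ref{lemma:b_k} with $\ell_0=\lfloor R_5\log x\rfloor$, giving $\ll x^{-\delta_1R_5}=o(x^{-d/2})$.

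Items (vi) and (v) run on a common engine. Conditioning on the spine and bones and applying the exponential-moment identity to the decoration attached at generation $k$,
\begin{align*}
  \E^\q\!\Big[\sum_{v\in V_{n,k}}e^{\hat{c}_2\eta_v-n\psi(\hat{\bc}_2)}\,\Big|\,\{\bS_k\},\{\bb_k(i)\}\Big]=e^{\hat{c}_2S_k-(k+1)\psi(\hat{\bc}_2)}\sum_i e^{\hat{c}_2b_{k+1}(i)}.
\end{align*}
On $E$ we feed in $E_{10}$ ($S_k<x+R_4\log x-\bar{c}_1(n-k)$) and $E_{11}$ ($\sum_ie^{\hat{c}_2b_{k+1}(i)}<e^{\ee_1\hat{c}_2(n-k)}$); using $e^{\hat{c}_2x-n\psi(\hat{\bc}_2)}\asymp e^{n(I(\hat{c}_1)-\log\rho)}$ and the crucial gap \eqref{eq:ccc} (which yields $\psi(\hat{\bc}_2)<\hat{c}_2(\bar{c}_1-\ee_1)$), the right-hand side is $\ll x^{\hat{c}_2R_4}e^{n(I(\hat{c}_1)-\log\rho)}e^{-\delta(n-k)}$ for some $\delta>0$. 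Summing over $k<n-R_3\log x$ gives $\ll x^{\hat{c}_2R_4-\delta R_3}e^{n(I(\hat{c}_1)-\log\rho)}=o(x^{-d/2}e^{n(I(\hat{c}_1)-\log\rho)})$ once $R_3$ is large relative to $R_4$; this is (vi). Estimate (v) is the analogous first-moment count for $\bar D_{n,k}$ (particles together with all their ancestors), translated to $\q$ via the deterministic bound above: a decoration that detached at generation $k\leq n-R_2\log x$ starts, under $\q$, near $\hat{c}_1k\be$ and so must travel $\asymp\hat{c}_1(n-k)$ over at most $n-k$ steps to touch $B_x$, i.e.\ at speed $\geq\hat{c}_1>c_1$, whence its expected count is exponentially small in $n-k$; intersecting with the good spine events (whose complements are negligible by (ii)--(iii)) to absorb the rare chance that the spine itself has run far ahead, the sum over $k$ is $o(x^{-d/2}e^{-n(I(\hat{c}_1)-\log\rho)})$ once $R_2$ is large.

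The main obstacle is (iv). On $\{\tau_x=n\}\cap E_7^{\mathrm c}$ the hitting particle cannot be the spine endpoint (that would force $\bS_n\in B_x\subseteq B_{R_1\log x}(\bx)$), so it detaches from the spine at some generation $k$, and I would split the sum over $n-k$. When $n-k\gg\log x$ the decoration travels at the natural speed $\leq c_1<\bar{c}_1$, too slow (given $E_{10}$) to reach $B_x$, exactly as in (v)--(vi). When $n-k=O(\log x)$ the decoration has too few generations to span the distance $\geq R_1\log x$ that on $E_7^{\mathrm c}$ separates $\bS_k$ (then within $O(\sqrt{\log x})$ of $\bS_n$) from $\bx$; a crude exponential Markov bound on a $\bxi$-walk's displacement, times the $O(1)$ expected number of bones, is $\ll(\rho C)^{n-k}x^{-\lambda R_1}$, summing over $n-k\leq c\log x$ to $o(x^{-d/2})$ once $R_1$ is large relative to $c$. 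The genuinely delicate range is the intermediate one, where---as flagged in the introduction, and the reason $W_n$ keeps the last $R_3\log x$ generations rather than trimming them---near-horizon decorations \emph{do} contribute to $\{\tau_x=n\}$ when the spine lands near $\bx$: to kill them on $E_7^{\mathrm c}$ one must exploit the strict convexity of $I$ together with $\nabla I(\hat{\bc}_1)=\hat{\bc}_2$ (an ``ellipse with common tangent'' estimate), so that a decoration reaching $B_x$ while $\bS_n$ lies at distance $\geq R_1\log x$ from $\bx$ pays an extra factor $\ll e^{-c(R_1\log x)^2/(n-k)}$; combined with the overshoot control $E_9$ this renders the whole sum $o(x^{-d/2})$ for $R_1$ large. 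Assembling all six bounds with a common choice $R_1\succ R_2=R_3\succ R_5\succ R_4>d/(2\hat{c}_2)$ yields Proposition~\ref{prop:final}, which is the crux of \eqref{eq:ii}.
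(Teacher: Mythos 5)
Your treatment of items (i)--(iii), (v) and (vi) is in substance the paper's own proof: (ii) is the Cram\'er/union bound over the last $n-k\geq R_5\log x$ steps, (iii) is Lemma \ref{lemma:b_k} with $\ell_0=\lfloor R_5\log x\rfloor$, and (v)--(vi) are the conditional exponential-moment computation \eqref{eq:s2} (with $\bar V_{n,k}$ in place of $V_{n,k}$ for (v)), using exactly the gap \eqref{eq:ccc} and the hierarchy $R_2=R_3\succ R_5\succ R_4>d/(2\hat c_2)$. For (i) you argue via the spine particle's own contribution to $W_n^{\hat\bc_2}$ under $\q$ rather than the paper's bound $\p(S_n\geq x+R_4\log x)\leq\p(M_n\geq x+R_4\log x)\leq\rho^n\p(\sum_i\xi_i\geq x+R_4\log x)$; both routes force $\hat c_2R_4>d/2$ and yours is fine, modulo the standard remark that ``$\p$ of a spine event'' means the extension $\E^\q[(W_n^{\hat\bc_2})^{-1}\bone_A]$, which is exactly the quantity needed in the proof of Theorem \ref{prop:alg}.

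The genuine gap is in (iv), which is the heart of the proposition. Two problems. First, your claim that for $n-k=O(\log x)$ the point $\bS_k$ lies within $O(\sqrt{\log x})$ of $\bS_n$ is false: under $\q$ the spine has drift $\hat{\bc}_1$, so $\n{\bS_n-\bS_k}$ is of order $n-k$, i.e.\ up to a constant times $\log x$; this is repairable (replace it by an exponential tail bound on $\n{\bS_n-\bS_k}$ and take $R_1$ much larger than the cutoff), but it matters because it is precisely how the constant in front of $\log x$ propagates into the requirement $R_1\succ R_2\succ R_5$. Second, and more seriously, the ``delicate intermediate regime'' you flag is resolved by an unproven claim: the asserted extra factor $e^{-c(R_1\log x)^2/(n-k)}$ coming from strict convexity of $I$ is neither derived nor sufficient --- for $n-k\gtrsim(\log x)^2$ it is $O(1)$, and ``combined with the overshoot control $E_9$'' is not an argument. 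In fact no convexity input is needed and there is no residual intermediate range: splitting exactly at $n-k=R_5\log x$, the early detachments ($n-k\geq R_5\log x$) are killed by the moment bound \eqref{eq:s1}--\eqref{eq:s2} under $E_9\cap E_{10}\cap E_{11}$ (needing $\ee_1R_5\succ R_4$), while the late detachments ($n-k<R_5\log x$) are killed by first showing that on $E_7^{\mathrm c}\cap E_9$ the spine never entered $B_{R_2\log x}(\bx)$ except on an event of probability $o(x^{-d/2})$ (Lemma \ref{lemma:2balls}, which is where $R_1\succ R_2$ enters, via the dichotomy ``slow last stretch'' versus ``huge last-stretch displacement''), and then that a bone plus a decoration cannot span a distance $\asymp R_2\log x$ in at most $R_5\log x$ generations when $R_2\succ R_5$ (Lemma \ref{lemma:R2R5}). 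Your two cases, correctly instantiated with this split point, would tile the whole range; as written, the proposal leaves the middle range resting on an unsubstantiated Gaussian-type estimate, so (iv) is not proved.
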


\begin{proof}[Proof of Theorem \ref{prop:alg}] As argued above, it remains to prove \eqref{eq:ii}. Let $n=\lfloor x/\hat{c}_1\rfloor-t=x/\hat{c}_1+O(1)$. By the triangle inequality, \eqref{eq:Zdef}, and \eqref{eq:LB trivial}, we write
\begin{align*}
    &|\E^\q[Z]-\p(\tau_x=n)|\\
    &\leq \E^\q\bigg[\frac{\d\p}{\d\q}|\bone_{\tau_x=n}-\bone_E|\bigg]+\E^\q\Bigg[\bigg(\bigg(\sum_{v\in W_n}e^{\hat{c}_2 \eta_v-n(\hat{c}_1\hat{c}_2+\log\rho-I(\hat{c}_1))}\bigg)^{-1}-\frac{\d\p}{\d\q}\bigg)\bone_E\Bigg]\\
    &\leq \p(E_9^{\mathrm{c}})+e^{-n(I(\hat{c}_1)-\log\rho)}\q(\{\tau_x=n\}\cap E_7^{\mathrm{c}}\cap E_9\cap E_{10}\cap E_{11})\\
    &\hspace{2cm}+e^{-n(I(\hat{c}_1)-\log\rho)}(\q(E_9\cap E_{10}^{\mathrm{c}})+\q(E_9\cap E_{11}^{\mathrm{c}}))\\
    &\hspace{2cm}+\p(\{\tau_x=n\}\cap E_8^{\mathrm{c}})+\p(\{\tau_x=n\}^{\mathrm{c}}\cap E_8)\\
    &\hspace{2cm}+\E^\q\Bigg[e^{-2n(I(\hat{c}_1)-\log\rho)}\sum_{v\in V_n\setminus W_n}e^{\hat{c}_2 \eta_v-n(\hat{c}_1\hat{c}_2+\log\rho-I(\hat{c}_1))}\bone_E\Bigg].
    % &\leq o(x^{-d/2}e^{-n(I(\hat{c}_1)-\log\rho)})+o(x^{-d/2}e^{-n(I(\hat{c}_1)-\log\rho)})\q(E)\\
    % &=o(\p(\tau_x= \lfloor x/\hat{c}_1\rfloor-t)),
\end{align*}
\sloppy By Proposition \ref{prop:final}(\ref{QE7}), $\q(\{\tau_x=n\}\cap E_7^{\mathrm{c}}\cap E_9\cap E_{10}\cap E_{11})=o(x^{-d/2})$; by Proposition \ref{prop:final}(\ref{PE7}), $\p(E_9^{\mathrm{c}})=o(x^{-d/2}e^{-n(I(\hat{c}_1)-\log\rho)})$; by Proposition \ref{prop:final}(\ref{S1})\&(\ref{S2}), $\q(E_9\cap E_{10}^{\mathrm{c}})+\q(E_9\cap E_{11}^{\mathrm{c}})=o(x^{-d/2})$. 
Note also that
$$\{\tau_x=n\}\cap E_8^{\mathrm{c}}\subseteq \{\tau_x=n\}\cap\{\exists k\in[n-R_2\log x], \bar{D}_{n,k}(B_x)>0\}$$
and that since $E_8\subseteq\{\tau_x\leq n\}$, 
$$\{\tau_x=n\}^{\mathrm{c}}\cap E_8\subseteq \{\tau_x<n\}\cap E_8\subseteq \{\tau_x<n\}\cap\{\exists k\in[n-R_2\log x], \bar{D}_{n,k}(B_x)>0\},$$
so by Proposition \ref{prop:final}(\ref{PE8}), $\p(\{\tau_x=n\}\cap E_8^{\mathrm{c}})+\p(\{\tau_x=n\}^{\mathrm{c}}\cap E_8)=o(x^{-d/2}e^{-n(I(\hat{c}_1)-\log\rho)})$. 
Finally, by Proposition \ref{prop:final}(\ref{QE}), 
$$\E^\q\Bigg[e^{-2n(I(\hat{c}_1)-\log\rho)}\sum_{v\in V_n\setminus W_n}e^{\hat{c}_2 \eta_v-n(\hat{c}_1\hat{c}_2+\log\rho-I(\hat{c}_1))}\bone_E\Bigg]=o(x^{-d/2}e^{-n(I(\hat{c}_1)-\log\rho)}).$$

Altogether, we conclude using also Proposition \ref{prop:LB} that
$$|\E^\q[Z]-\p(\tau_x=n)|=o(x^{-d/2}e^{-n(I(\hat{c}_1)-\log\rho)})=o(\p(\tau_x= \lfloor x/\hat{c}_1\rfloor-t)),$$
    as desired.
\end{proof}

We start the proof of Proposition \ref{prop:final} with the following two technical lemmas that remove certain unlikely events.

\begin{lemma}\label{lemma:2balls}
    Fix $R_2,R_4>0$, then for any $R_1$ large enough, the following holds. Let $A_x=((-\infty,x+R_4\log x]\times\R^{d-1})\setminus B_{R_1\log x}(\bx)$. Then 
    \begin{align}
        \q(\exists k\in[n], \bS_k\in B_{R_2\log x}(\bx); \bS_n\in A_x)=o(x^{-d/2}).\label{eq:2balls}
    \end{align}
\end{lemma}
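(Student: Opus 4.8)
The plan is to localize the spine walk at its first visit to the small ball $B_{R_2\log x}(\bx)$ and then reduce \eqref{eq:2balls} to a one-dimensional lower-deviation estimate for $\{\bS_k\}$ restarted at that visit. Let $\sigma=\min\{k\ge 1:\bS_k\in B_{R_2\log x}(\bx)\}$; for $x$ large the event in \eqref{eq:2balls} is precisely $\{\sigma\le n\}\cap\{\bS_n\in A_x\}$. By Lemma \ref{lemma:S_k}(i) together with (A3'), under $\q$ the walk $\{\bS_k\}$ has i.i.d.\ increments with mean $\hat{\bc}_1=(\hat c_1,\z)$ and a moment generating function finite in a neighbourhood of the origin. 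Decomposing over the value of $\sigma$ and applying the strong Markov property would give
\begin{align*}
&\q\big(\exists k\in[n],\,\bS_k\in B_{R_2\log x}(\bx);\,\bS_n\in A_x\big)\\
&\quad=\sum_{k=1}^n\q(\sigma=k,\,\bS_n\in A_x)\le\sum_{k=1}^n\q(\sigma=k)\,p_{n-k}\le\sup_{0\le m\le n}p_m,
\end{align*}
where $p_m:=\sup_{\by\in B_{R_2\log x}(\bx)}\q(\by+\bS_m\in A_x)$ and $\sum_k\q(\sigma=k)\le 1$. If $\by\in B_{R_2\log x}(\bx)$ and $\by+\bS_m\in A_x$ then (a) the first coordinate satisfies $S_m\le (x+R_4\log x)-(x-R_2\log x)=(R_2+R_4)\log x$, where $S_m$ is the first coordinate of $\bS_m$ under (A2') (recall \eqref{eq:Sk}), and (b) $\n{\bS_m}\ge\n{(\by+\bS_m)-\bx}-\n{\bx-\by}\ge(R_1-R_2)\log x$; hence, uniformly in $\by$, $p_m\le\q\big(S_m\le(R_2+R_4)\log x;\ \n{\bS_m}\ge(R_1-R_2)\log x\big)$.

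It then remains to show $\sup_{0\le m\le n}p_m=o(x^{-d/2})$, which I would do in three regimes. First, $p_0=0$ since $\z\notin A_x$ for $x$ large. Second, I would fix a large constant $M$ — depending only on the law of the BRW, $R_2$, $R_4$ and $d$ — with $M\hat c_1\ge 2(R_2+R_4)$; then for $M\log x<m\le n$ drop constraint (b) and note $S_m\le(R_2+R_4)\log x\le\tfrac12 m\hat c_1<m\hat c_1=\E^\q[S_m]$, a genuine lower deviation, so a Chernoff bound gives $p_m\le e^{-c_0 m}$ for some $c_0>0$ independent of $x$; taking $M$ also large enough that $c_0 M>d/2$ forces $\sup_{m>M\log x}p_m=o(x^{-d/2})$. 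Third, for $1\le m\le M\log x$ drop constraint (a) and use $\n{\bS_m}\ge(R_1-R_2)\log x$: after centering, $\n{\bS_m-m\hat{\bc}_1}\ge(R_1-R_2-M\hat c_1)\log x$, and a coordinatewise Chernoff bound with a fixed exponential tilt yields $p_m\le C\,x^{-\kappa(R_1)}$ with $\kappa(R_1)\to\infty$ as $R_1\to\infty$ while $M$ stays fixed; choosing $R_1$ large (after $M$) makes $\sup_{1\le m\le M\log x}p_m=o(x^{-d/2})$ as well. Combining the three regimes proves \eqref{eq:2balls}.

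The point requiring care — and the main, though mild, obstacle — is the order in which the constants are chosen: $R_2,R_4$ are given; $M$ must be taken large next, both to turn $\{S_m\le(R_2+R_4)\log x\}$ into a true lower deviation for $m>M\log x$ and to beat $x^{-d/2}$ there; and only afterwards can $R_1$ be taken large, as a function of $M$, so that a displacement of norm $\ge(R_1-R_2)\log x$ accumulated in at most $M\log x$ steps has probability $o(x^{-d/2})$. Every Chernoff/Cramér estimate is uniform in $x$ because the increment law of $\{\bS_k\}$ is fixed while only the target distance $x$ grows; spherical symmetry (A2') is used solely to identify $S_k$ in \eqref{eq:Sk} with the first coordinate of $\bS_k$, so that the lower-deviation estimate in the second regime is honestly one-dimensional.
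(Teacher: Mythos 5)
Your proof is correct and follows essentially the same route as the paper: the same two-regime split according to whether the time remaining after hitting $B_{R_2\log x}(\bx)$ exceeds a constant multiple of $\log x$, the same Cram\'er/Chernoff estimates (lower deviation of the drifted first coordinate when many steps remain, an atypically large displacement over $O(\log x)$ steps otherwise), and the same ordering of constants; the only structural difference is that you decompose at the first entrance time and use the strong Markov property to bound by $\sup_m p_m$, whereas the paper applies a union bound over $k$ with stationary increments and absorbs the resulting multiplicity by taking its threshold constant $R_6$ large. One cosmetic slip: the justification of $p_0=0$ via ``$\z\notin A_x$'' is off (in fact $\z\in A_x$ for large $x$); the correct reason is that $\by\in B_{R_2\log x}(\bx)\subseteq B_{R_1\log x}(\bx)$ forces $\by\notin A_x$, or simply that your constraint (b) already gives $p_0\le\q\big(0\ge (R_1-R_2)\log x\big)=0$, so nothing in the argument is affected.
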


\begin{proof}
Consider some $R_6>0$ to be determined, and we split the index $k$ into the two regions $k\in[1,n-R_6\log x]$ and $k\in (n-R_6\log x,n]$. In the first case, we have that the event on the left-hand side of \eqref{eq:2balls} implies $S_n-S_k\leq (R_2+R_4)\log x$; in the second case, the event implies $\n{\bS_n-\bS_k}\geq (R_1-R_2)\log x$. 
Therefore, by the union bound and the stationarity of the increments, we obtain
\begin{align*}
    &\q(\exists k\in[n], \bS_k\in B_{R_2\log x}(\bx); \bS_n\in A_x)\\
    &\leq \q(\exists k\in[1,n-R_6\log x], \bS_k\in B_{R_2\log x}(\bx); \bS_n\in A_x)\\
    &\hspace{2cm}+\q(\exists k\in[n-R_6\log x, n], \bS_k\in B_{R_2\log x}(\bx); \bS_n\in A_x)\\
    &\leq \sum_{k=1}^{\lfloor n-R_6\log x\rfloor} \q(S_{n-k}\leq (R_2+R_4)\log x)+\sum_{k=\lfloor n-R_6\log x\rfloor+1}^{n}\q(\n{\bS_{n-k}}\geq (R_1-R_2)\log x).
\end{align*}
Note that the random walk $\{S_k\}$ has a positive drift $\hat{c}_1$ under $\q$. Using Cram\'{e}r's large deviation upper bound, we have that for $R_6$ large enough compared to $R_2+R_4$ but fixed, there is a uniform constant $\delta_7>0$ such that $\q(S_{n-k}\leq (R_2+R_4)\log x)\leq e^{-(n-k)\delta_7}$ for $k\in[1,n-R_6\log x]$. Similarly, using the triangle inequality, we have for $R_1-R_2$ large enough compared to $R_6$, $\q(\n{\bS_{n-k}}\geq (R_1-R_2)\log x)\ll e^{-\delta_8 R_6\log x}$ for $k\in(n-R_6\log x,n]$ (note that $\delta_8$ does not depend on $R_6$ as long as $(R_1-R_2)/R_6$ is large enough). Altogether, we have
\begin{align*}
    \q(\exists k\in[n], \bS_k\in B_{R_2\log x}(\bx); \bS_n\in A_x)&\ll \sum_{k=1}^{\lfloor n-R_6\log x\rfloor} e^{-(n-k)\delta_7}+\sum_{k=\lfloor n-R_6\log x\rfloor+1}^{n}e^{-\delta_8 R_6\log x}\\
    &\ll x^{-\min\{\delta_7,\delta_8/2\}R_6}.
\end{align*}
For $R_6$ picked large enough, we have $x^{-\min\{\delta_7,\delta_8/2\}R_6}=o(x^{-d/2})$, as desired.    
\end{proof}

\begin{lemma}\label{lemma:R2R5}
Fix $R_5>0$. Then for $R_2>0$ large enough,
    $$\q\bigg(\exists v\in \bigcup_{n-\lfloor R_5\log x\rfloor\leq k\leq n}V_{n,k},\bet_v\in B_x; \forall k\in[n], \bS_k\not\in B_{R_2\log x}(\bx)\bigg)=o(x^{-d/2}).$$
\end{lemma}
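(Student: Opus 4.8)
The plan is a first-moment (union-bound) estimate over the off-spine particles created during the last $m:=\lfloor R_5\log x\rfloor$ generations, exploiting that such a particle must make an anomalously long displacement in only $O(\log x)$ steps once the spine never comes within $R_2\log x$ of $\bx$. Write $A$ for the event in the statement; the union there effectively runs over $k\in[n-m,n-1]$, since $V_{n,n}$ is degenerate and the spine tip $\bS_n$ is excluded outright on $\{\bS_n\notin B_{R_2\log x}(\bx)\}$ once $R_2\log x\geq 1$. On $A$, a witnessing particle $v$ that left the spine at time $k$ has location $\bet_v=\bS_k+\bb_{k+1}(i)+\bet^{(i,k)}_u\in B_\bx$ with $\bS_k\notin B_{R_2\log x}(\bx)$, so its displacement $\bb_{k+1}(i)+\bet^{(i,k)}_u$ from the spine---a path built from $n-k\leq m+1$ increments distributed as $\bxi$---has norm at least $R_2\log x-1$.

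Two structural facts about the spine decomposition under $\q$ are used. First, the spine offspring numbers $\{N_j\}$ are i.i.d.\ with the size-biased law $\q(N_j=\ell)=\ell p_\ell/\rho$, so $\E^\q[N_j^\delta]=\E[\zeta^{1+\delta}]/\rho<\infty$ by (A1'). Second, conditionally on the $\sigma$-field $\G$ generated by the spine data $(\{N_j\},\{w_j\},\{\bb_j(w_j)\})$, the non-spine bones $\{\bb_j(i):i\neq w_j\}_j$ are i.i.d.\ copies of $\bxi$ and the off-spine BRWs are independent standard BRWs with jump $\bxi$, all independent of $\G$; this follows from \eqref{eq:dq/dp on spine} together with the spine-selection rule, whose combined effect leaves the non-spine children with the unreweighted law $\p_\bxi$. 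Merging each bone with the BRW rooted at its tip and applying the many-to-one lemma gives, conditionally on $\G$,
\[
\E^\q\Big[\sum_{i\neq w_{k+1}}\sum_{u\in V^{(i,k)}_{n-k-1}}\bone_{\bet_v\in B_\bx}\ \Big|\ \G\Big]=(N_{k+1}-1)\,\rho^{\,n-k-1}\,\p\big(\widetilde\bS_{n-k}\in B_\bx-\bS_k\big),
\]
where $\widetilde\bS_{n-k}=\bxi_1+\dots+\bxi_{n-k}$ is a centered random walk with i.i.d.\ increments distributed as $\bxi$, and $\bS_k$ is $\G$-measurable.

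Next I would truncate the offspring numbers. Fix $\lambda_0>0$ small enough that $\phi_\xi(\lambda_0)<\infty$ (possible by (A2')--(A3'); note $\log\phi_\xi(\lambda_0)>0$ since $\xi$ is centered and non-degenerate), set $C_1>d/(2\delta)$, $M:=x^{C_1}$, and $G:=\{\forall j\in[n-m,n],\ N_j\leq M\}$. Then $\q(G^{\mathrm c})\leq(m+1)\E^\q[N_1^\delta]/M^\delta=o(x^{-d/2})$ by Markov. On $G$ and on $\{\forall k',\,\bS_{k'}\notin B_{R_2\log x}(\bx)\}$, spherical symmetry plus an exponential-tilting (Chernoff) bound in the $2d$ coordinate directions give, for $n-k\leq m+1$,
\[
\p\big(\widetilde\bS_{n-k}\in B_\bx-\bS_k\big)\leq\p\big(\|\widetilde\bS_{n-k}\|\geq R_2\log x-1\big)\leq 2d\,\phi_\xi(\lambda_0)^{\,n-k}e^{-\lambda_0(R_2\log x-1)/\sqrt d}\ll x^{\,R_5\log\phi_\xi(\lambda_0)-\lambda_0 R_2/\sqrt d}.
\]
Substituting into the conditional first moment, using $N_{k+1}\leq M$ and $\rho^{\,n-k-1}\leq x^{R_5\log\rho}$ on $G$, and summing over the $O(\log x)$ relevant values of $k$, one obtains $\q(A)\leq\q(G^{\mathrm c})+O\big((\log x)\,x^{\,C_1+R_5\log\rho+R_5\log\phi_\xi(\lambda_0)-\lambda_0 R_2/\sqrt d}\big)$. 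Choosing $R_2$ large enough---depending only on $R_5$ and the law of the BRW---so that $\lambda_0 R_2/\sqrt d>C_1+R_5\log\rho+R_5\log\phi_\xi(\lambda_0)+d/2+1$ makes the second term $o(x^{-d/2})$, which proves the lemma.

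The delicate point---the main obstacle---is making the union bound summable: it rests on the off-spine BRWs running for only $O(\log x)$ generations (so they generate only $\rho^{O(\log x)}=x^{O(1)}$ particles) while the required displacement $R_2\log x$ grows linearly in $\log x$, so that a single exponential-tilting estimate overwhelms all polynomial factors once $R_2$ is large. The one technical wrinkle is that (A1') supplies merely a $(1+\delta)$-th moment, so $\E^\q[N_j]$ may be infinite and the truncation event $G$ is genuinely needed in place of a bare first-moment computation; everything else is routine.
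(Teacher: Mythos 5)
Your proposal is correct and takes essentially the same route as the paper: a first-moment/union bound over the particles that left the spine during the last $\lfloor R_5\log x\rfloor$ generations, combined with Chernoff/exponential-tilting estimates showing that a displacement of order $R_2\log x$ accumulated over only $O(\log x)$ steps is super-polynomially unlikely, so that choosing $R_2$ large relative to $R_5$ beats the $x^{O(1)}$ particle count. The only divergence is technical: the paper absorbs the size-biased offspring numbers and the bone displacements into the single exponential moment $\E^\q[\sum_i e^{\delta_9\n{\bb_\ell(i)}}]<\infty$, whereas you truncate $N_j$ at a polynomial level (using the $\delta$-moment of the size-biased law) and exploit that, conditionally on the spine data, the non-spine bones are i.i.d.\ with the nominal law — both devices are valid and your structural claims about the spine decomposition check out.
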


\begin{proof}
    The proof follows a line similar to that of Lemma \ref{lemma:2balls}, using union bound and large deviation arguments. 
    Using (A2'), (A3'), and the same arguments that lead to \eqref{eq:1+delta condition} below, one can show that for some $\delta_9>0$, $\E^\q[\sum_i e^{\delta_9\n{\bb_\ell(i)}}]<\infty$. Also note that the sum over $i$ is finite and the number of terms is given by the size-biased law of $\zeta$. Therefore, following large deviation estimates, one can show that for $R_2$ much larger than $R_5$,

    $$\sum_{k=n-\lfloor R_5\log x\rfloor}^n\sum_{i\neq w_{k+1}}\sum_{u\in V^{(i,k)}_{n-k-1}}\q\Big(\n{\bb_{k+1}(i)+\bet_u^{(i,k)}}>R_2\log x\Big)=o(x^{-d/2}).$$
    The desired result then follows from the union bound.
\end{proof}

% \begin{lemma}\label{lemma:bridge}
%    Fix $R_4>0$. Then for any $R_5>0$ large enough, 
%    $$\q(\exists k\in[n-\lfloor R_5\log x\rfloor],S_k\geq x+R_4\log x-\bar{c}_1(n-k); S_n\leq x+R_4\log x)=o(x^{-d/2}).$$
% \end{lemma}

% \begin{proof}
% This is the same as asking uniform control of a conditioned random walk bridge (but this time no uniform positivity). 
% Use the same argument before: look at $S_{n-n^{1/3}}$ conditioned on $S_n$. 

% \begin{remark}
%     If $\xi$ has a density and sufficient regularity is assumed, the above lemma allows for a simpler proof using the KMT coupling for random walk bridges \citep{dimitrov2021kmt}.
% \end{remark}

\begin{proof}[Proof of Proposition \ref{prop:final}]

\eqref{PE7} 
 Let $M_n$ denote the maximum among the first coordinates of all particles' locations at time $n$. 
By definition, 
\begin{align}
    \p(S_n\geq x+R_4\log x)\leq \p(M_n\geq x+R_4\log x).\label{eq:SnMn}
\end{align}
By the first moment method and Cram\'{e}r's upper bound, we have for $R_4$ picked large enough,
\begin{align*}
    \p(M_n\geq x+R_4\log x)&\leq \rho^n\p\Big(\sum_{i=1}^n\xi_i\geq x+R_4\log x\Big)\\
    &\leq \rho^n e^{-nI(\hat{c}_1)-I'(\hat{c}_1)R_4\log x}=o(x^{-d/2}e^{-n(I(\hat{c}_1)-\log\rho)}).
\end{align*}

\eqref{S1} We let $R_5\to\infty$. Recall from above \eqref{eq:ccc} that $\bar{c}_1\in(c_1,\hat{c}_1)$. A standard large deviation upper bound shows that for some $\delta_{10}>0$,
\begin{align*}
   &\q(\exists k\in[n-\lfloor R_5\log x\rfloor],S_k\geq x+R_4\log x-\bar{c}_1(n-k); S_n\leq x+R_4\log x)\\
    &\leq \sum_{\ell=\lfloor R_5\log x\rfloor}^n \q(S_\ell\leq \bar{c}_1\ell)\ll \sum_{\ell=\lfloor R_5\log x\rfloor}^n e^{-\delta_{10}\ell}\ll x^{-\delta_{10} R_5}.
\end{align*}
With $R_5$ chosen large enough, this is $o(x^{-d/2})$.

\eqref{S2}  By Lemma \ref{lemma:b_k} applied with $\ell_0=\lfloor R_5\log x\rfloor$, we know that outside an event with probability $o(x^{-d/2})$ and for $R_5$ large enough, for each $k\in[n-\lfloor R_5\log x\rfloor]$, $\sum_i e^{\hat{c}_2b_{k+1}(i)}<e^{\ee_1\hat{c}_2(n-k)}$. 

(\ref{QE7}) Using the notations of Lemma \ref{lemma:2balls}, we have
\begin{align*}
    &\q(\{\tau_x=n\}\cap E_7^c\cap E_9\cap E_{10}\cap E_{11})\\
   % &=\q(\tau_x=n;S_n\leq x+R_4\log x;\bS_n\not\in B_{R_1\log x}(\bx);E_{10}\cap E_{11})\\
    &=\q(\tau_x=n; \bS_n\in A_x;E_{10}\cap E_{11})\\
    &\leq o(x^{-d/2})+\q(\tau_x=n; \bS_n\in A_x; \forall k\in[n], \bS_k\not\in B_{R_2\log x}(\bx);E_{10}\cap E_{11}).
\end{align*}
By definition, we have
\begin{align*}
    &\q(\tau_x=n; \bS_n\in A_x; \forall k\in[n], \bS_k\not\in B_{R_2\log x}(\bx);E_{10}\cap E_{11})\\
    &\leq \q(\exists v\in V_n,\bet_v\in B_x; S_n\leq x+R_4\log x; \forall k\in[n], \bS_k\not\in B_{R_2\log x}(\bx);E_{10}\cap E_{11}).
\end{align*}
% Consider a number $R_5$ much smaller than $R_2$ but much larger than $R_4$. In this case, by a union bound argument, 
By Lemma \ref{lemma:R2R5}, for $R_2$ chosen large enough in terms of $R_5$,
$$\q\bigg(\exists v\in \bigcup_{n-\lfloor R_5\log x\rfloor\leq k\leq n}V_{n,k},\bet_v\in B_x; S_n\leq x+R_4\log x; \forall k\in[n], \bS_k\not\in B_{R_2\log x}(\bx)\bigg)=o(x^{-d/2}).$$
    Therefore, by the union bound, it remains to show 
    \begin{align}
        \q\bigg(\exists v\in \bigcup_{k=0}^{n-\lfloor R_5\log x\rfloor}V_{n,k}, \bet_v\in B_x; E_9\cap E_{10}\cap E_{11}\bigg)=o(x^{-d/2}).\label{eq:s1}
    \end{align}
    % By Lemma \ref{lemma:bridge}, we know that outside an event with probability $o(x^{-d/2})$, we may impose the further constraint that for all $k\in[n-\lfloor R_5\log x\rfloor]$, $S_k<x+R_4\log x-\bar{c}_1(n-k)$. 
    We then establish \eqref{eq:s1} by showing that, with $R_5$ overwhelmingly larger than $R_4$, the sum of exponential moments over the relevant particles is small conditioned on $\{\bS_k\},\{\bb_k(i)\}$, through the same argument in the proof of Proposition \ref{prop:LB}. 
    % Indeed, let $E'$ denote the event that
    % \begin{align*}
    %     &\forall k\in[n-\lfloor R_5\log x\rfloor],S_k<x+R_4\log x-\bar{c}_1(n-k);\\
    %     & \forall k\in[n-\lfloor R_5\log x\rfloor],\sum_i e^{\hat{c}_2b_k(i)}<e^{\ee\hat{c}_2(n-k)};\\
    %     & S_n\leq x+R_4\log x.
    % \end{align*}
    Indeed, we have by \eqref{eq:psibth} and \eqref{eq:ccc},
\begin{align}
   % \E\bigg[\frac{\d\q}{\d\p}\bigg]&\propto 
    \begin{split}
        &\E^\q\bigg[\sum_{k=0}^{n-\lfloor R_5\log x\rfloor}\sum_{v\in V_{n,k}}e^{\hat{c}_2\eta_v}\mid E_9\cap E_{10}\cap E_{11},\{\bS_k\},\{\bb_k(i)\}\bigg]\\
    &=\bone_{E_9\cap E_{10}\cap E_{11}}\sum_{k=0}^{n-\lfloor R_5\log x\rfloor} e^{\hat{c}_2 S_k}\bigg(\sum_{i=1}^{N_k} e^{\hat{c}_2 b_{k+1}(i)}\bigg)\E^\q\bigg[\sum_{v\in V^{(i,k)}_{n-k-1}}e^{\hat{c}_2\eta^{(i,k)}_v}\bigg]\\
    &\ll \sum_{k=0}^{n-\lfloor R_5\log x\rfloor} e^{\hat{c}_2(x+R_4\log x-(n-k)\bar{c}_1)}e^{\ee_1\hat{c}_2(n-k)} e^{(n-k-1)(\hat{c}_1\hat{c}_2+\log\rho-I(\hat{c}_1))}\\
    &\ll e^{\hat{c}_2 x}x^{-\hat{c}_2(\ee_1 R_5-R_4)}.
    \end{split}
    \label{eq:s2}
\end{align} It follows that for $R_5$ chosen large enough, \eqref{eq:s1} holds.    

\eqref{PE8} This follows from a similar moment computation above in the proof of \eqref{QE7}, where we proved the stronger fact that 
\begin{align}
    \q(S_n\leq x+R_4\log x;\exists k\in[n-\lfloor R_5\log x\rfloor],D_{n,k}([x,\infty)\times \R^{d-1})=0)=o(x^{-d/2}).\label{eq:D bar D}
\end{align}
To see that the $D_{n,k}$ can be replaced by $\bar{D}_{n,k}$ in \eqref{eq:D bar D}, simply note that the sum in \eqref{eq:s2} can be replaced by a sum over all past trajectories without changing the final result:
\begin{align*}
    &\E^\q\bigg[\sum_{k=0}^{n-\lfloor R_5\log x\rfloor}\sum_{v\in \bar{V}_{n,k}}e^{\hat{c}_2\eta_v}\mid E_9\cap E_{10}\cap E_{11},\{\bS_k\},\{\bb_k(i)\}\bigg]\\
    &=\bone_{E_9\cap E_{10}\cap E_{11}}\sum_{k=0}^{n-\lfloor R_5\log x\rfloor} e^{\hat{c}_2 S_k}\bigg(\sum_{i=1}^{N_k} e^{\hat{c}_2 b_{k+1}(i)}\bigg)\sum_{j=0}^{n-k-1}\E^\q\bigg[\sum_{v\in V^{(i,k)}_{j}}e^{\hat{c}_2\eta^{(i,k)}_v}\bigg]\\
    &\ll \sum_{k=0}^{n-\lfloor R_5\log x\rfloor} e^{\hat{c}_2(x+R_4\log x-(n-k)\bar{c}_1)}e^{\ee_1\hat{c}_2(n-k)} e^{(n-k-1)(\hat{c}_1\hat{c}_2+\log\rho-I(\hat{c}_1))}\\
    &\ll e^{\hat{c}_2 x}x^{-\hat{c}_2(\ee_1 R_5-R_4)}.
\end{align*}
Finally, applying \eqref{eq:LB trivial} (which holds on the event $\{\tau_x=n\}\cap E_8$) turns this $\q$-probability into a $\p$-probability.

\eqref{QE} Recall the definition \eqref{eq:Wn} and that $R_3$ can be chosen much larger than $R_4$. It remains to show the stronger statement that
\begin{align}
    \E^\q\bigg[\sum_{v\in V_n\setminus W_n}e^{\hat{c}_2(\eta_v-x)}\bone_{E_9\cap E_{10}\cap E_{11}}\bigg]=o(x^{-d/2}).\label{eq:s3}
\end{align}
But this follows immediately from the previous computation \eqref{eq:s2}.
\end{proof}

% \subsection{Pseudocode}\label{sec:pseudo code}

% We recall the events $E_7$ through $E_{11}$ defined in \eqref{eq:events}. 

\section{Numerical experiments}\label{sec:numerical}

%In this section we perform three types of experiments.

The numerical experiments presented in this section are designed to validate the mathematical proofs presented in the previous sections. They allow us to verify the asymptotic predictions and scaling laws governing both the estimator's performance and the underlying probabilistic estimates in numerically tractable regimes. Furthermore, we investigate the influence of various parameters on the estimator analyzed in the previous sections. While the asymptotic optimality of the estimator was derived for large values of $x$ (with bounds that hold uniformly over $x$), our aim here is to study the behavior of the estimator for moderate values of $x$, which are more tractable in practical settings. 
We systematically list the parameters that appear in the design of the estimator and discuss how sensitive the running time and mean squared error of the algorithm are to these choices. Some of these parameters implicitly induce a bias-variance trade-off in the overall estimator because reducing the bias within a fixed computational budget may necessitate a longer per-run time, thereby affecting the overall efficiency. Although some of our theoretical results require taking certain parameters to be large for technical reasons, the experiments demonstrate that, in practice, these parameters can be chosen to be reasonably small without affecting the bias significantly, yielding substantial improvements in computational speed.

Throughout this section, we set $d=3$ and assume that $p_3 = 0.0856$ and $p_1 = 1-p_3$, so that $\p(S)=1$ and $\rho=1.1712$. The jump distribution is the uniform distribution on $\S^{d-1}$, the $(d-1)$-dimensional sphere, which is spherically symmetric. Setting $d=3$ gives $c_1=0.319$. 
% denoted by $\mathcal{N}(\mathbf{0}, \mathbf{I}_d)$, 
 We also use the short-hand notation $n=\lfloor x/\hat{c}_1\rfloor$.

\subsection{Comparison of theory against  brute-force Monte Carlo BRW}
In this section, we present a comparison of the large deviation probabilities described in Theorems~\ref{thm:Ld1} and \ref{thm:LD2} against the FPT distribution obtained from $10^6$ samples generated using brute-force Monte Carlo BRW simulations. Figure~\ref{fig:fpt_dist_ref_theory} shows that both the lower-tail and upper-tail probabilities predicted by Theorems~\ref{thm:Ld1} and \ref{thm:LD2} agree well with the extremal probability distribution of the FPT obtained from the brute-force Monte Carlo BRW simulations. It is important to note that the lower-tail and upper-tail probabilities derived from Theorems~\ref{thm:Ld1} and \ref{thm:LD2} may deviate by at least a proportionality constant. The theoretical predictions shown in Figure~\ref{fig:fpt_dist_ref_theory_a} are given by $x^{-d/2}
    \exp(-\frac{x}{\hat{c}_1}(I(\hat{c}_1,\z)-\log\rho))$ for the lower tail and $\exp(-xT(x,\hat{c}_1;\rho,\gamma))$ for the upper tail, both scaled using the average proportionality constant (i.e., the scaling ratio) across different large deviation rates.  For the present case of $p_3 = 0.0856$, $p_1 = 1-p_3$, and $x=100$, the average scaling constants are $0.552$ for the lower tail and $8.318$ for the upper tail. This scaling is applied to facilitate comparison with brute-force Monte Carlo BRW simulations at the FPTs corresponding to the tails of the distribution. However, in practice these proportionality constants depend on $\hat{c}_1$ as described in Theorem \ref{thm:Ld1}. This dependence is illustrated in Figure~\ref{fig:fpt_dist_ref_theory_b}. 
\begin{figure}[!htbp]
    \centering
    \subfigure[]{\includegraphics[width=0.48\linewidth]{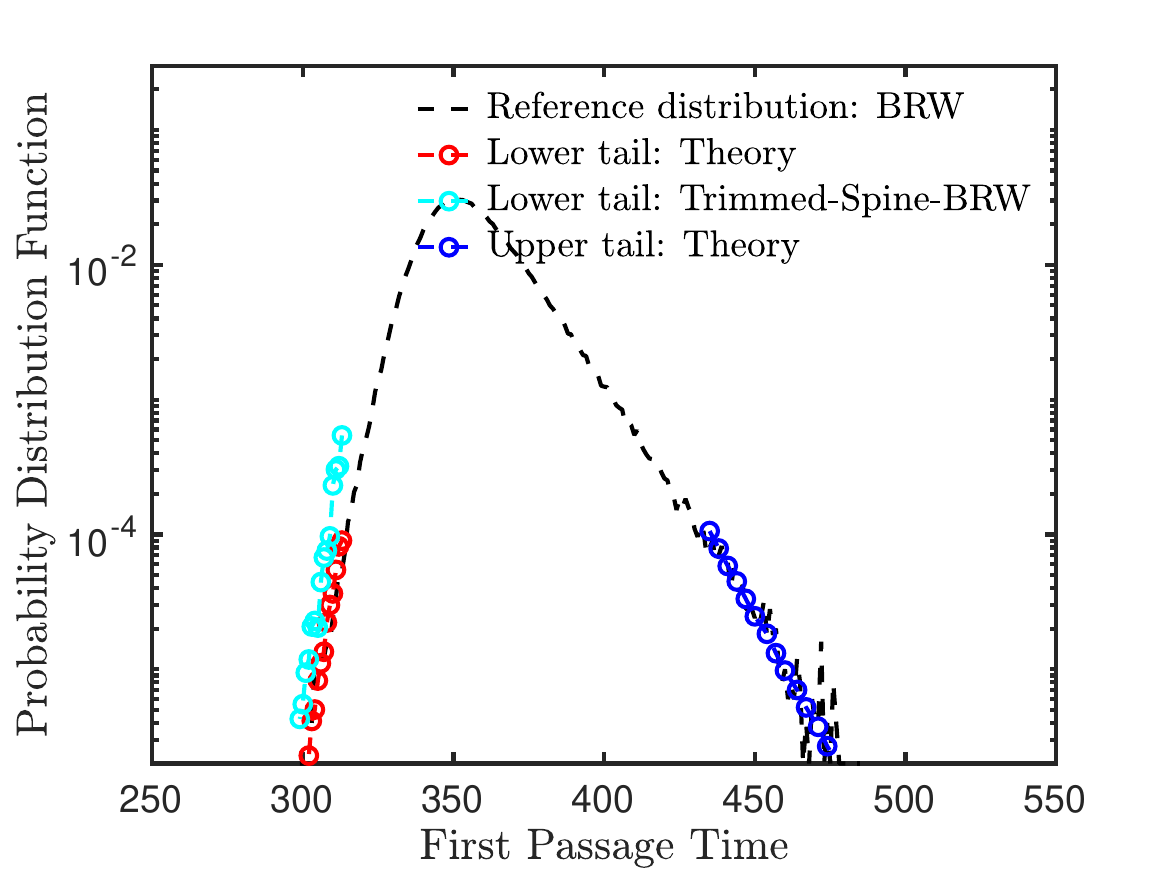}\label{fig:fpt_dist_ref_theory_a}}
    \subfigure[]{\includegraphics[width=0.48\linewidth]{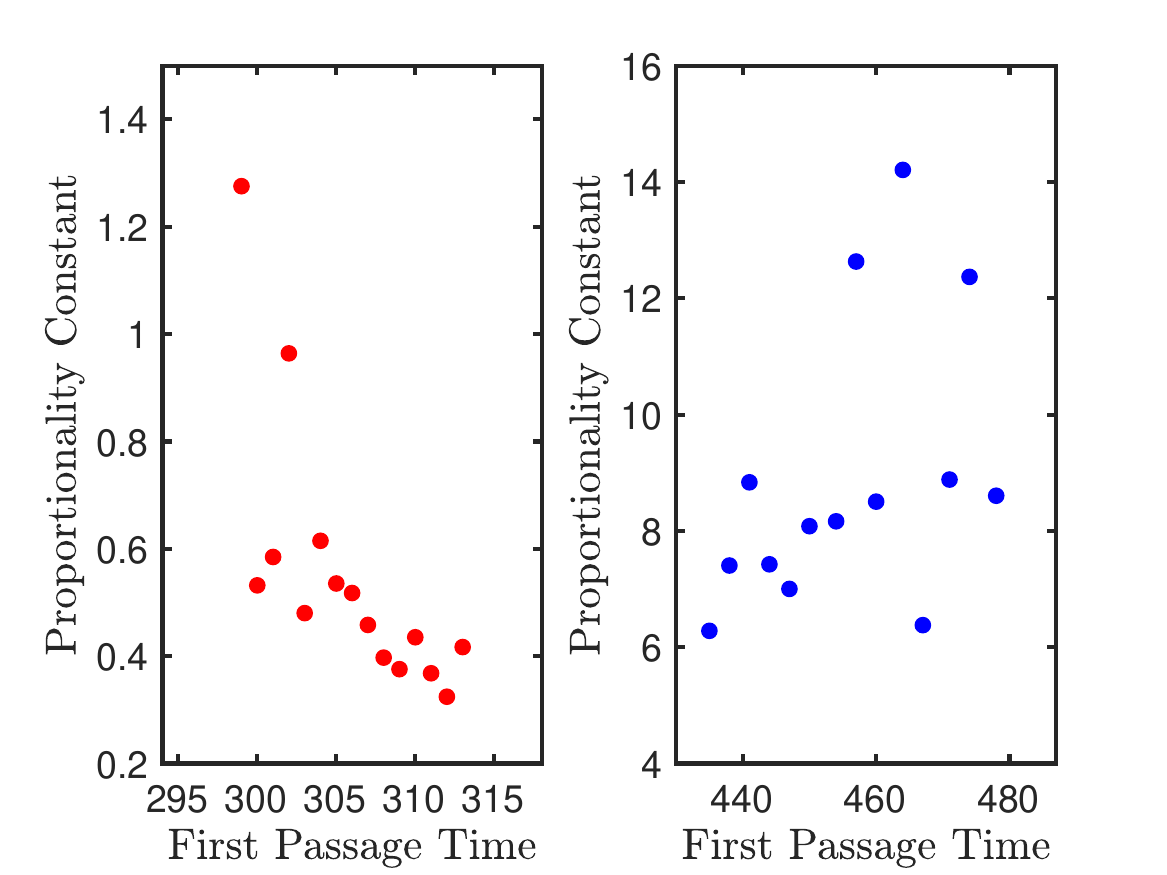}\label{fig:fpt_dist_ref_theory_b}}
    \caption{(a) Comparing the probability mass functions of the FPT between the brute-force Monte Carlo BRW generated from $10^6$ samples (dashed line), the scaled prediction from Theorems~\ref{thm:Ld1} and \ref{thm:LD2} (red circular-dashed plot for the lower tail and the blue circular-dashed plot for the upper tail) and our importance sampling algorithm (spine BRW; cyan circular-dashed plot) for $p_3 = 0.0856$, $p_1 = 1-p_3$, $x=100$, and $\hat{c}_1 \in [1.001c_1,\,1.02c_1]$ for the lower tail and $\hat{c}_1 \in [0.65c_1,\,0.72c_1]$ for the upper tail. (b) Proportionality constants required to match the theoretical predictions with brute-force Monte Carlo BRW results at the FPT corresponding to the lower and upper tails of the distribution. 
    \label{fig:fpt_dist_ref_theory}}
\end{figure}
%

%In addition to verifying the large deviation probabilities, we also present the trajectories of lower-tail and upper-tail samples collected from the extremal values of $10^3$ trajectories generated through brute-force Monte Carlo BRW simulations, as shown in Figure~\ref{fig:fpt_extremal}. These trajectories confirm the intuitions associated with the BRW paths that satisfy the extremities of the FPT distribution. The lower-tail trajectory heads in the direction of the termination target, whereas the upper-tail trajectory moves away from the termination target before heading towards it. The typical trajectory (close to the mean of the FPT distribution) moves marginally away from the termination target at the beginning.
%

%
\subsection{Comparison of the Trimmed-Spine-BRW algorithm against brute-force Monte Carlo}
In this section, we present the numerical implementation of the Trimmed-Spine-BRW algorithm, described in Section~\ref{sec:3.2}, including the details of its performance compared to brute-force Monte Carlo BRW simulations. A key advantage of the Trimmed-Spine-BRW algorithm is its ability to efficiently generate extremely rare paths (on the lower tail)—beyond the reach of brute-force BRW simulations—enabling the estimation of probabilities as small as $10^{-42}$, as shown later in Figure~\ref{fig:CDF_c1}.

By Theorem \ref{thm:Ld1}, the approximation of $\p(\tau_x=n)$ is given by 
\begin{align}
    \p(\tau_x=n) \asymp n^{-3/2} {\exp}\left(-\frac{x}{\hat{c}_1}\left(I(\hat{c}_1)-\log \rho\right)\right),\label{eq:th}
\end{align}
 Figure~\ref{fig:CDF_c1} shows good agreement in the numerically estimated values of $\p(\tau_x=n)$ with the theoretical asymptotics \eqref{eq:th} derived from Theorem \ref{thm:Ld1} (which is up to a multiplicative constant). Here we use $10^5$ samples, $\hat{c}_1\in[1.02\,c_1,\,1.5\,c_1]$ for $x=100$ in the Trimmed-Spine-BRW simulations. 
 % The theoretical asymptotics shown in Figure~\ref{fig:CDF_c1} are scaled by $\beta \approx 1.25$, which is obtained by fitting the theory to the numerical simulation results at $\hat{c}_1=1.2c_1$ and $x\in[50,\,1000]$. 
% For $\hat{c}_1=1.5\,c_1$, we can access the lower-tail probability, $\p(\tau_x=n)$, as low as $10^{-48}$, which corresponds to an FPT of 207. Such rare events are practically impossible to capture using brute-force simulations, underscoring the efficiency and the benefit of the Trimmed-Spine-BRW algorithm.

%
\begin{figure}[!htbp]
    \centering  
    % \subfigure[]{\includegraphics[width=0.48\linewidth]{figures/c1_convergence_PDF_3D.pdf}\label{fig:CDF_c1_a}}
    % \subfigure[]{\includegraphics[width=0.48\linewidth]{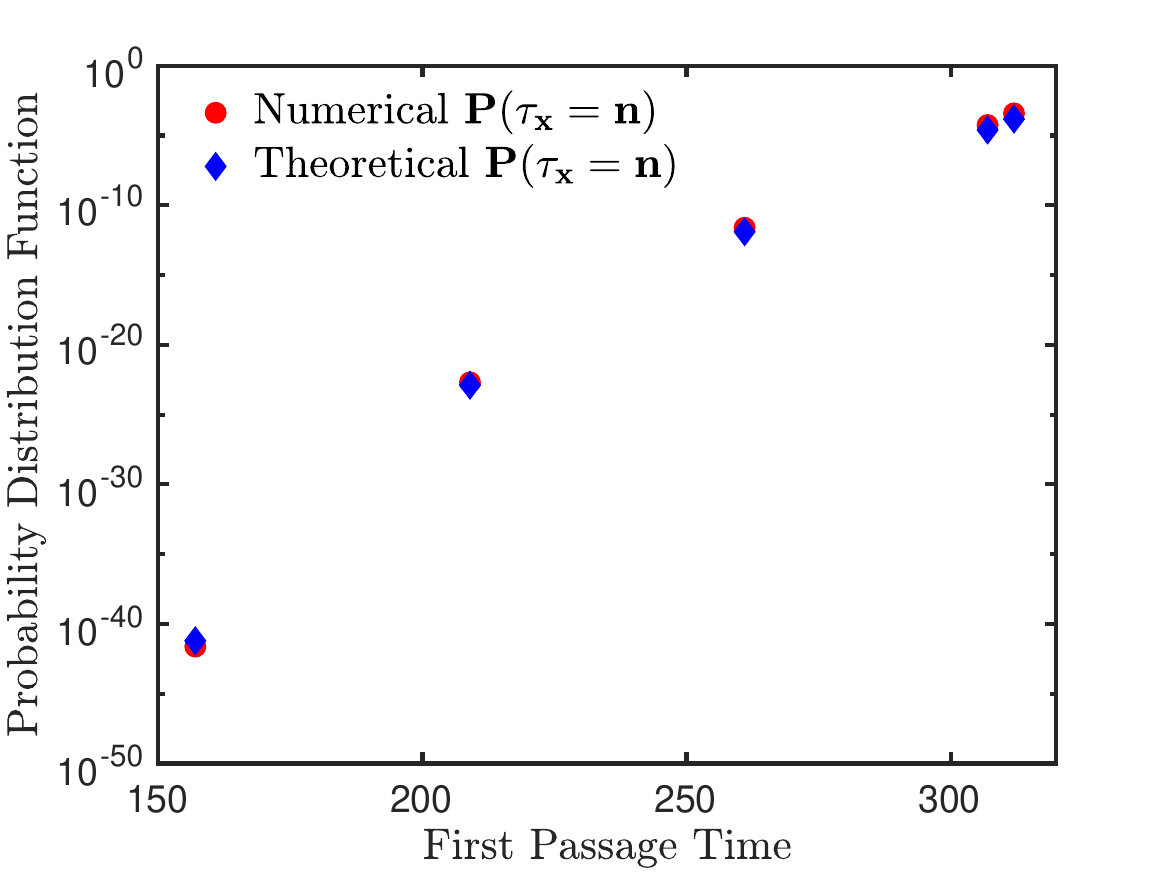}\label{fig:CDF_c1_b}}
    \includegraphics[width=0.48\linewidth]{figures/left_tail_fpt_pdf.pdf}
    % \caption{(a) Comparison of $\p(\tau_x=n)$  between the Trimmed-Spine-BRW simulation and the theoretical asymptotics \eqref{eq:th}, and (b) the PDF from Trimmed-Spine-BRW simulations as a function of the effective FPT with $L=10^5$ samples, $\hat{c}_1=1.2\,c_1$ for $x\in[50,\,1000]$. }
    \caption{Comparison of $\p(\tau_x=n)$  between the Trimmed-Spine-BRW simulation and the theoretical asymptotics \eqref{eq:th}, as a function of the effective FPT with $10^5$ samples, $\hat{c}_1\in[1.02\,c_1,\,1.5\,c_1]$ for  $x=100$. 
    % Here, the theoretical asymptotics are scaled by $\beta=1.25$, which is the proportionality constant obtained by fitting the theory to the simulation results at $\hat{c}_1=1.2c_1$ and $x\in[50,\,1000]$. 
    }
    \label{fig:CDF_c1}
\end{figure}
%

% Our theory does not explicitly predict the probability distribution function, but it estimates the cumulative density function as verified numerically by the spine BRW. The goal of the spine BRW method is to accurately capture the left tail of the FPT distribution. To verify this
In the next experiment, we compare the probability mass function of the FPT estimated using the Trimmed-Spine-BRW algorithm against the brute-force Monte Carlo BRW implementation with $p_3 = 0.0856$ and $p_1 = 1-p_3$. The numerical results confirm that the Trimmed-Spine-BRW algorithm successfully captures the lower tail of the FPT distribution in the range $\hat{c}_1 \in [1.001c_1,\,1.02c_1]$, as shown in Figure~\ref{fig:fpt_dist_ref_theory_a} (cyan circular-dashed plot).
This agreement indicates that our method provides an efficient and reliable alternative to brute-force Monte Carlo simulations, particularly for rare-event estimation, where we can capture the extremal values of the FPT distribution by increasing $\hat{c}_1$.
%
% \begin{figure}[!htbp]
%     \centering
%     \\includegraphics[width=0.48\linewidth]{figures/spine_PDF_3D.pdf}
%     \caption{Comparing the probability mass functions of the FPT between the brute-force Monte Carlo BRW (dashed line) and our importance sampling algorithm (spine BRW; red circular-dashed plot) for $p_3 = 0.0856$, $p_1 = 1-p_3$, $x=100$, and $\hat{c}_1 \in [1.001c_1,\,1.02c_1]$.}
%     \label{fig:spine_dist}
% \end{figure}
%

Finally, to further investigate the dependence of the exponent, $$-\lim_{x\to\infty} \frac{\log\p(\tau_x=n)+\frac{x}{\hat{c}_1}(I(\hat{c}_1)-\log \rho)}{\log n}$$ in the analytic estimate of $\p(\tau_x=n)$, we perform a one-dimensional parameter scan over $\omega$ for the parameters $R_1,R_2,R_3,R_4,R_5$ in Section \ref{sec:3.2} (see Remark \ref{rem:omega}).
Note that the theoretical value of the exponent is $d/2=3/2$. 
Specifically, we set $R_4=d/(2\hat{c}_2)$, and define the remaining parameters in terms of $\omega$ as follows: $R_5 =\omega R_4$, $R_2=R_3=\omega^2R_4$, and $R_1=\omega^3R_4$ (which aligns with the order these parameters are chosen; see the beginning of Section \ref{sec:3.2}). We examine the scaling behavior obtained from the Trimmed-Spine-BRW simulations for a range of values $\omega \in[1,2]$, and observe that for $\omega>1$ the exponent of $3/2$ is captured within $1\%$. %The small variations in $\beta$ and the exponent due to the choice of $\omega$ do not significantly affect the numerical results presented in this manuscript.

\section{Conclusions}
\label{sec:conclusions}
This paper focuses on understanding and simulating the first passage times of branching random walks in $\R^d$ in the large deviation regime. We derive the asymptotic order of the lower-tail large deviation probabilities and show that the upper-tail large deviation rates are connected to solutions to an optimization problem involving large deviation rate functions. Moreover, we provide a polynomial-time algorithm, called the Trimmed-Spine-BRW algorithm, that estimates the lower-tail large deviation probabilities. We confirm that our algorithm is asymptotically unbiased and logarithmically efficient.

The techniques in this work extend to many other relevant settings. For example, the same ideas extend to understanding the upper large deviation of the maximum of a one-dimensional BRW. In this case, \citep[Theorem 1.3]{luo2025precise} proves precise estimates of the large deviation probability, but the asymptotic constant remains implicit. Our approach leads to an asymptotically optimal algorithm that computes the asymptotic constant numerically.

\section*{Acknowledgement}

The material in this paper is partly supported by the Air Force Office of Scientific Research under award number FA9550-20-1-0397 and ONR N000142412655. Support from NSF 2229012, 2312204, 2403007 is also gratefully acknowledged. The open source code for the Trimmed-Spine-BRW algorithm can be accessed from \href{https://gitlab.com/micronano_public/PolyBranchX}{PolyBranchX}. 

\bibliographystyle{plain} % Style BST file (imsart-number.bst or imsart-nameyear.bst)
\bibliography{reference}  

\appendix

\appendix
\section{Index of frequently used notation}\label{appendix:notation}

\begin{tabularx}{\textwidth}{@{}lX@{}}

\toprule

 $d$ & Underlying dimension of the BRW, $d\geq 1$\\
  $\zeta$ & Reproduction law of the BRW; $\p(\zeta=j)=p_j$\\
$\bxi$ & Jump distribution of the BRW\\
$\phi_\bxi$ & Moment generating function of $\bxi$\\
$S$ & Event of survival at all times\\
$p,q$ & $p=\p(S)=1-q$\\
  $\gamma$& $-\log\E[\zeta q^{\zeta-1}]$\\
$I(\bx)$&Large deviation rate function of $\bxi$\\
$\rho$ & Expected number of particles in the first generation, $\rho=\E[\zeta]=\sum_{j\geq 1}jp_j$\\
$c_1$ & Defined through $I(c_1,\z)=\log\rho$\\
$\hat{c}_1$ & The hypothetical speed of BRW in large deviation regimes (e.g.~$\hat{c}_1>c_1$ means lower deviation for FPT)\\
$\hat{\bc}_1$ & $(\hat{c}_1,\z)$\\
$\hat{\bc}_2$&$\nabla I(\hat{c}_1,\z)$\\
$B_x$ & Unit ball centered at $\bx=(x,0,\dots,0)\in\R^d$\\
$B_\bz$ & Unit ball centered at $\bz\in\R^d$\\
$B_r(\bz)$ & Ball centered at $\bz\in\R^d$ of radius $r$\\
$\tau_x$  & First passage time of $d$-dimensional BRW to $B_x$ \\
$M_n$&Maximum among the first coordinates of all particles' locations at time $n$\\
 $V_n$& Collection of particles at time $n$\\
$\bet_v$ & Location of the particle $v$\\
$\q$ & Law of the spine BRW\\
$\{\bS_k\}$ & Spine random walk\\
$\{\bb_k(i)\}$ & Adjacent jumps (bones) attached to the spine\\
$b_k(i)$ & The first coordinate of $\bb_k(i)$\\
$\{\bet^{(i,k)}\}$ & Copies of i.i.d.~BRW attached to the bones\\
$V^{(i,k)}_j$ & Collection of particles that left the spine at time $k$ and joined bone $i$, at time $k+1+j$\\ 
$V_{n,k}$&Particles at time $n$ that left the spine at time $k$\\
$\bar{V}_{n,k}$&All past locations of particles in $V_{n,k}$\\
$D_{n,k}$&Point process associated to $V_{n,k}$\\
$\bar{D}_{n,k}$&Point process associated to $\bar{V}_{n,k}$\\
$W_n$& A subset of $V_n$ that left the spine late; see \eqref{eq:Wn}\\
\bottomrule
%\end{center}
\label{tab:TableOfNotationForMyResearch}
\end{tabularx}

\section{Deferred proofs}\label{app:proofs}
\begin{proof}[Proof of Lemma \ref{lemma:S_k}]

(i) By the definition of $\{\bb_k(i)\}$, conditioned on having $k$ offsprings from a spine particle, the law of their displacements (with respect to their parent on the spine) under $\q^{(k)}$ satisfies
\begin{align}
    \frac{\d\q^{(k)}}{\d\p_\bxi^{\otimes k}}(\bx_1,\dots,\bx_k)\propto \sum_{i=1}^k e^{\bth\cdot\bx_i}.\label{eq:p1}
\end{align}
Furthermore, the next spine particle is selected with probability proportional to $e^{\bth\cdot\bxi_i}$. 
It follows from \eqref{eq:p1} that for any Borel set $A$,
\begin{align*}
    \q_{\mathrm{spine}}(A)&=\int \frac{\sum_{i=1}^k\bone_{\{\bx_i\in A\}}e^{\bth\cdot\bx_i}}{\sum_{i=1}^ke^{\bth\cdot\bx_i}}\d\q^{(k)}(\bx_1,\dots,\bx_k)\\
    &\propto \int \sum_{i=1}^k\bone_{\{\bx_i\in A\}}e^{\bth\cdot\bx_i}\d\p_\bxi^{\otimes k}(\bx_1,\dots,\bx_k)\\
    &\propto \int \bone_{\{\bx\in A\}}e^{\bth\cdot\bx}\d\p_\bxi(\bx).
\end{align*}
The non-lattice property directly follows from the definition (assumption (A4)). This proves (i).

(ii) As in our setup $\bth=\hat{\bc}_2$ and $\nabla I(\hat{c}_1,\z)=\hat{\bc}_2$, the mean is $(\hat{c}_1,\z)$. The finiteness of the variance follows from the existence of exponential moments in a neighborhood of $\hat{\bc}_2$. 
 
  (iii) This follows from the local CLT (see e.g.~Corollary 1 of \citep{stone1967local}).
\end{proof}

\begin{proof}[Proof of Lemma \ref{lemma:b_k}]
    We follow the same lines as the proof of Lemma 2.4 of \citep{luo2025precise}. First, note that (A1) and (A3) together imply the following integrability condition:
\begin{align}
    \E\bigg[\Big(\sum_{u\in V_1}e^{\hat{\bc}_2\cdot\bet_u}\Big)^{1+\delta}\bigg]<\infty.\label{eq:1+delta condition}
\end{align}
Indeed, by first applying H\"{o}lder's inequality then conditioning on $|V_1|$,
\begin{align*}
     \E\bigg[\Big(\sum_{u\in V_1}e^{\hat{\bc}_2\cdot\bet_u}\Big)^{1+\delta}\bigg]&\leq  \E\bigg[\Big(\sum_{u\in V_1}e^{(1+\delta)\hat{\bc}_2\cdot\bet_u}\Big)|V_1|^\delta\bigg]\\
     &=\sum_{i=1}^\infty p_i i^{1+\delta}\E\big[e^{(1+\delta)\hat{\bc}_2\cdot\bet_u}\big]<\infty.
\end{align*}
By \eqref{eq:1+delta condition} and the definition of $\{\bb_\ell(i)\}$, 
    $$\E^\q\bigg[\Big(\sum_{i=1}^{N_\ell} e^{\hat{\bc}_2\cdot\bb_\ell(i)}\Big)^{\delta}\bigg]\ll\E\bigg[\Big(\sum_{u\in V_1}e^{\hat{\bc}_2\cdot\bet_u}\Big)^{1+\delta}\bigg]<\infty.$$
It follows from Markov's inequality that
\begin{align*}
    \sum_{\ell\geq \ell_0}\q\bigg(\sum_{i=1}^{N_\ell} e^{\hat{\bc}_2 \cdot\bb_{\ell}(i)}\geq e^{\ee\ell }\bigg)&\ll \sum_{\ell\geq \ell_0} e^{-\ee\delta\ell  }\ll e^{-\ee\delta \ell_0 }.
\end{align*}
   Putting $\delta_1=\ee\delta $ completes the proof.
\end{proof}

\end{document}